\renewcommand\sout{\bgroup\markoverwith
{\textcolor{red}{\rule[0.7ex]{3pt}{1.4pt}}}\ULon}
\newcommand\op[1]{\psi^{#1}(M; E)}
\newcommand{\clop}{\overline{\psi^{0}}(M; E)}
\newcommand{\clopn}{\overline{\psi^{-1}}(M; E)}
\newcommand{\Hom}{\operatorname{Hom}}
\newcommand{\End}{\operatorname{End}}
\newcommand{\Aut}{\operatorname{Aut}}
\newcommand{\ind}{\operatorname{ind}}
\newcommand{\Ind}{\operatorname{Ind}}
\newcommand{\Prim}{\operatorname{Prim}}
\newcommand{\CC}{\mathbb C}
\newcommand{\RR}{\mathbb R}
\newcommand{\ZZ}{\mathbb Z}
\newcommand{\maC}{\mathcal C}
\newcommand{\maH}{\mathcal H}
\newcommand{\maK}{\mathcal K}
\newcommand{\maL}{\mathcal L}
\newcommand{\maR}{\mathcal R}
\newcommand\pa{\partial}
\newcommand\ede{\, := \,}
\newcommand\seq{\, = \,}
\newtheorem{theorem}{Theorem}[section]
\newtheorem{lemma}[theorem]{Lemma}
\newtheorem{proposition}[theorem]{Proposition}
\newtheorem{corollary}[theorem]{Corollary}
\theoremstyle{definition}
\newtheorem{definition}[theorem]{Definition}
\newtheorem{remark}[theorem]{Remark}
\begin{document}

\title[Fredholm conditions]{Fredholm conditions for invariant
  operators: finite abelian groups and boundary value problems}

\author[A. Baldare]{Alexandre Baldare}
\email{alexandre.baldare@univ-lorraine.fr}

\author[R. C\^ome]{R\'emi C\^ome} \email{remi.come@univ-lorraine.fr}

\author[M. Lesch]{Matthias Lesch} \address{M.L.: Mathematisches
  Institut, Universit\"at Bonn, Endenicher Allee 60, 53115 Bonn,
  Germany} \email{ml@matthiaslesch.de, lesch@math.uni-bonn.de}
\urladdr{www.matthiaslesch.de, www.math.uni-bonn.de/people/lesch}
\thanks{M.L. was partially supported by the Hausdorff Center for
  Mathematics, Bonn. }

\author[V. Nistor]{Victor Nistor} \email{nistor@math.psu.edu}
\address{A.B, R.C., and V.N. Universit\'{e} Lorraine, 57000 Metz,
  France} \urladdr{http://www.iecl.univ-lorraine.fr/~Victor.Nistor}

\thanks{A.B., R.C., and V.N. have been partially supported by
  ANR-14-CE25-0012-01 (SINGSTAR). Manuscripts available from {\bf
    http:{\scriptsize//}www.math.psu.edu{\scriptsize/}nistor{\scriptsize/}.
} }

\dedicatory{We dedicate this paper to Professor Dan Voiculescu on the
  occasion of his 70th birthday}

\begin{abstract}
We answer the question of when an invariant pseudodifferential
operator is Fredholm on a fixed, given isotypical component. More
precisely, let $\Gamma$ be a compact group acting on a smooth,
compact, manifold $M$ without boundary and let $P \in \psi^m(M; E_0,
E_1)$ be a $\Gamma$-invariant, classical, pseudodifferential operator
acting between sections of two $\Gamma$-equivariant vector bundles
$E_0$ and $E_1$. Let $\alpha$ be an irreducible representation of the
group $\Gamma$. Then $P$ induces by restriction a map $\pi_\alpha(P) :
H^s(M; E_0)_\alpha \to H^{s-m}(M; E_1)_\alpha$ between the
$\alpha$-isotypical components of the corresponding Sobolev spaces of
sections. We study in this paper conditions on the map $\pi_\alpha(P)$
to be Fredholm. It turns out that the discrete and non-discret cases
are quite different. Additionally, the discrete abelian case, which
provides some of the most interesting applications, presents some
special features and is much easier than the general case. Moreover,
some results are true only in the abelian case. We thus concentrate in
this paper on the case when $\Gamma$ is {\em finite abelian}. We prove
then that the restriction $\pi_\alpha(P)$ is Fredholm if, and only if,
$P$ is ``$\alpha$-elliptic,'' a condition defined in terms of the
principal symbol of $P$ (Definition \ref{def.chi.ps}). If $P$ is
elliptic, then $P$ is also $\alpha$-elliptic, but the converse is not
true in general. However, if $\Gamma$ acts freely on a dense open
subset of $M$, then $P$ is $\alpha$-elliptic for the given fixed
$\alpha$ if, and only if, it is elliptic. The proofs are based on the
study of the structure of the algebra $\psi^{m}(M; E)^\Gamma$ of
classical, $\Gamma$-invariant pseudodifferential operators acting on
sections of the vector bundle $E \to M$ and of the structure of its
restrictions to the isotypical components of $\Gamma$. These
structures are described in terms of the isotropy groups of the action
of the group $\Gamma$ on $E \to M$.
\end{abstract}

\maketitle \tableofcontents

\section{Introduction}

Fredholm operators have many applications to Mathematical Physics, to
Partial Differential Equations (linear and non-linear), to Geometry,
and to other areas of mathematics. They have their origin in the work
of several mathematicians on spectral theory and on integral equations
at the end of the nineteenths century. Fredholm operators are
ubiquitous in applications since on a compact manifold, a classical
pseudodifferential operator is Fredholm (acting between suitable
Sobolev spaces) if, and only if, it is elliptic.

In this paper, we obtain {\em an analogous result for the restriction
  of an invariant, classical pseudodifferential operator to a fixed
  isotypical component for the action of a finite abelian group
  $\Gamma$.} Of course, a $\Gamma$-invariant operator is Fredholm if,
and only if, its restrictions to {\em all} isotypical components are
Fredholm.  Our result, focuses on one {\em fixed} given isotypical
component. Namely, the restriction to the isotypical component
corresponding to an irreducible representation $\alpha$ of $\Gamma$ is
Fredholm if, and only if, the operator is $\alpha$-elliptic
(Definition \ref{def.chi.ps} and Theorem \ref{thm.main1} below). The
reasons we assume our group to be abelian and discrete are, first,
that the main result is nolonger correct as stated in the non-discrete
case and, second, that the general discrete case is quite different
(and much more difficult) than the abelian discrete case. Moreover,
some useful intermediate results are true only in the abelian case and
this is the case needed for applications to boundary value problems.

Although in the formulation of our main result we do not use
$C^*$-algebras, for its proof, we have found it convenient to use
them. Recently, there were quite a few papers using $C^*$-algebras to
obtain Fredholm conditions, see, for instance, \cite{dMG, DS1, LMN,
  LMR} among many others. Often groupoids were also used \cite{CNQ,
  DS2, Mo1, Re}. Fredholm conditions play an important role in Quantum
Mechanics in the study of the essential spectrum of $N$-body
Hamiltonians \cite{BLLS1, GI, Ge, HM, LS}. A powerful related
technique is that of ``limit operators'' \cite{LR, Li, LiS, RRS}. Some
of the most recent papers using similar ideas include \cite{Zhang,
  CCQ, CNQ, Remi, MougelH, Ma2, MaNi, vEY2}, to which we refer for
further references. Besides $C^*$-algebras, pseudodifferential
operators were also often used to obtain Fredholm conditions, see
\cite{DLR, LauterMoroianu1, LeschVertman} and the references therein.

Let us now explain our main result in more detail.

\subsection{$\Gamma$-principal symbol and $\alpha$-ellipticity}
In general, for any compact group $G$, we let $\widehat G$ denote the
set of equivalence classes of irreducible $G$-modules (or
representations), as usual. It is a finite set if $G$ is finite. If $T
: V \to W$ is a $G$-equivariant map of $G$-modules and $\alpha \in
\widehat G$, we let
\begin{equation}\label{eq.restriction}
    \pi_\alpha(T) : V_{\alpha} \to W_{\alpha}
\end{equation}
denote by the induced map obtained by restricting and corestricting
$T$ to the corresponding isotypical component. Our main result,
  Theorem \ref{thm.main1} is stated in terms of the $\Gamma$-principal
  symbol and $\alpha$-ellipticity, two concepts which we now
  introduce.

Let $\Gamma$ be a {\em finite abelian} group acting on a smooth,
compact, boundaryless manifold $M$ and let $P \in \psi^m(M; E_0, E_1)$
be a $\Gamma$-invariant classical pseudodifferential operator acting between
sections of two $\Gamma$-equivariant vector bundles $E_0$ and
$E_1$. Let $\alpha \in \widehat{\Gamma}$ and consider as above
\begin{equation}\label{eq.def.Pchi}
  \pi_\alpha(P) \, : \, H^s(M; E_0)_\alpha \, \to \, H^{s-m}(M;
  E_1)_\alpha\,,
\end{equation}
acting between the $\alpha$-isotypical components of the corresponding
Sobolev spaces of sections. The main question that we answer in this
paper is to determine when $\pi_\alpha(P)$ is Fredholm in terms of its
principal symbol
\begin{equation}\label{eq.princ.symb}
  \sigma_m(P) \in \Gamma(T^*M \smallsetminus \{0\}; \Hom(E_0, E_1))\,,
\end{equation}
regarded as a homogeneous function on the cotangent bundle of $M$.
Note that for $\Gamma$ abelian, $\widehat{\Gamma}$ consists of
characters, that is, of group morphisms $\Gamma \to \CC^*$. The reason
for restricting to the case $\Gamma$ finite abelian is that it
presents some special features, although some, but not all, of our
results extend to the case $\Gamma$ finite (possibly non-abelian) and
even to the case $\Gamma$ compact. The case $\Gamma$ abelian provides
some of the most important applications and is much easier than the
general case, so, for the sake of the clarity and brevity of the
presentation, we will assume in our main result that $\Gamma$ is
abelian. Moreover, some of our results are not true in the non-abelian
case, in general, and the main result (Theorem \ref{thm.main1}) is not
true for non-discrete groups (Corollary 2.5 of
\cite{atiyahGelliptic}).

For simplicity, we will consider only classical pseudodifferential
operators in this article \cite{LeschBook, Taylor2, Treves}. Recall
that a classical pseudodifferential operator $P$ is called {\em
  elliptic} if its principal symbol is invertible (away from the zero
section). If $P$ is elliptic, then it is Fredholm, and hence
$\pi_\alpha(P)$ is also Fredholm. The converse is not true, however,
in general. Indeed, we introduce, for any irreducible representation
$\alpha$ of $\Gamma$, an ``$\alpha$-principal symbol''
$\sigma_m^\alpha(P)$ (Definition \ref{def.chi.ps}) and prove that
$\pi_\alpha(P)$ is Fredholm if, and only if, its $\alpha$-principal
symbol is invertible (in which case we call $P$ {\em
  $\alpha$-elliptic}, see Theorem \ref{thm.main1} below for the
precise statement). As we have just noticed, in general, the
invertibility of the $\alpha$-principal symbol does not imply
ellipticity. To state these results in more detail, we need to
introduce some notation and terminology.

The $\Gamma$-invariance of $P$ implies that its principal symbol is
also $\Gamma$ invariant:
\begin{equation*}
  \sigma_m(P) \in \Gamma(T^*M \smallsetminus
  \{0\}; \Hom(E_0, E_1))^\Gamma \,.
\end{equation*}
To study the space of morphisms $\Gamma(T^*M \smallsetminus \{0\};
\Hom(E_0, E_1))^\Gamma$ in which the principal symbol $\sigma_m(P)$
lives, let
\begin{equation}\label{eq.def.Gamma.xi}
   \Gamma_\xi \ede \{ \gamma \in \Gamma \, \vert \ \gamma \xi =
   \xi \}
\end{equation}
denote the isotropy group of a $\xi \in T_x^*M$ in $\Gamma$, $x \in
M$, as usual. The isotropy $\Gamma_x$ of $x \in M$ is defined
similarly. Then the groups $\Gamma_\xi \subset \Gamma_x$ act on
$E_{0x}$ and on $E_{1x}$, the fibers of $E_0, E_1 \to M$ at $x$. If $q
\in \Gamma(T^*M \smallsetminus \{0\}; \Hom(E_0, E_1))^\Gamma$, then
$q(\xi) \in \Hom(E_{0x}, E_{1x})^{\Gamma_\xi}$. As we will see below,
there is no loss of generality for our main result to assume that $E_0
= E_1 = E$, in which case $\Hom(E_0, E_1) = \End(E)$.

Let us consider the space 
\begin{equation}\label{eq.def.XMG}
  X_{M, E, \Gamma} \ede \{ ( \xi, \rho) \, \vert \ \xi \in T^*M
  \smallsetminus \{0\}, \ \rho \in \widehat \Gamma_\xi, \mbox{ and }
   \Hom(\rho, E_x)^{\Gamma_\xi} \neq 0 \} \,.
\end{equation}
Let also $\rho \in \widehat{\Gamma}_\xi$ be an irreducible
representation of $\Gamma_\xi$, then
\begin{equation}\label{eq.hatQ}
  \hat{q} (\xi, \rho) \ede \pi_{\rho}(q(\xi)) \in \End(E_{x
    \rho})^{\Gamma_\xi}
\end{equation}
denotes the restriction of $q(\xi)$ to the isotypical component
corresponding to $\rho$, with $\pi_\rho$ defined in Equation
\eqref{eq.restriction}.  Thus $\hat{q}$ is a function on $X_{M, E,
  \Gamma}$. Applying this construction to $\sigma_m(P) \in \Gamma(T^*M
\smallsetminus \{0\}; \End(E))^\Gamma$, we obtain the function
\begin{equation}\label{eq.def.Gamma.symb}
  \sigma_m^\Gamma(P) \ede \widehat{\sigma_m(P)} : X_{M,E, \Gamma} \to
  \bigcup_{(x, \rho) \in X_{M,E, \Gamma}} \End(E_{x
    \rho})^{\Gamma_\xi}\,.
\end{equation}
That is,
\begin{equation}
  \sigma_m^\Gamma (P) (\xi, \rho) \ede \pi_{\rho}(\sigma_m(P)(\xi))
  \in \End(E_{x \rho})^{\Gamma_\xi}\,, \ \ \xi \in T_x^*M\,.
\end{equation}

The characterization of Fredholm operators can be reduced to each
component of the manifold. We shall therefore assume for the
  rest of this Introduction and beginning with Subsection
  \ref{ssec.isotropy} that our manifold $M$ is connected. This
  simplifies also the statements and the proof of our results. Let
  then $\Gamma_0$ be a minimal isotropy group for the connected
  manifold $M$ (see Subsection \ref{ssec.principal}). 

The $\alpha$-principal symbol $\sigma_m^\alpha(P)$ of $P$, $\alpha \in
\widehat{\Gamma}$, is defined in terms of $\sigma_m^\Gamma(P)$, but in
order to define it, we need an additional crucial ingredient that
takes $\alpha$ into account. For $\alpha \in \widehat{\Gamma}$ and
$\rho \in \widehat{\Gamma}_\xi$, we will say that $\alpha$ and $\rho$
are {\em $\Gamma_0$-disjoint} if $\Hom_{\Gamma_0} (\rho, \alpha) = 0$,
otherwise, we will say that they are {\em $\Gamma_0$-associated}. Let
\begin{equation}\label{eq.def.Xalpha}
   X^{\alpha}_{M, E, \Gamma} \ede \{(\zeta, \rho) \in X_{M, E ,
     \Gamma} \, \vert \ \rho \mbox{ and } \alpha \mbox{ are
     $\Gamma_0$-associated } \} \,.
\end{equation}

Let us assume for the rest of this introduction that $\Gamma$ is {\em
  abelian}. Then we have that $\alpha \in \widehat{\Gamma}$ and $\rho
\in \widehat{\Gamma}_\xi$ are $\Gamma_0$-associated if, and only if,
their restrictions to $\Gamma_0$ coincide, that is, if $\alpha
\vert_{\Gamma_0} = \rho \vert_{\Gamma_0}$. We can now define the {\em
  $\alpha$-principal symbol} $\sigma_m^\alpha (P)$ of $P$.

\begin{definition}\label{def.chi.ps}
The {\em $\alpha$-principal symbol} $\sigma_m^\alpha (P)$ of $P$ is
the restriction of $\sigma_m^\Gamma(P)$ to $X^\alpha_{M, E, \Gamma}$:
\begin{equation*}
  \sigma_m^\alpha(P) \ede \sigma_m^\Gamma(P)\vert_{X^\alpha_{M, E,
      \Gamma}}\,.
\end{equation*}
We shall say that $P \in \psi^m(M; E)$ is {\em $\alpha$-elliptic} if
its $\alpha$-principal symbol $\sigma_m^\alpha(P)$ is invertible
everywhere on its domain of definition. This definition extends right
away to operators in $\psi^m(M; E_0, E_1)$.
\end{definition}

\subsection{Statement of the main result}
Thus $P$ is $\alpha$-elliptic if, and only if, $\sigma_m^\Gamma(P)$ is
invertible on $X_{M, E, \Gamma}^\alpha$. We are ready now to state our main
result.

\begin{theorem}\label{thm.main1}
  Let $\Gamma$ be a finite \emph{abelian} group acting on a smooth,
compact manifold $M$ and let $P \in \psi^m(M; E_0, E_1)$ be a
classical pseudodifferential operator acting between sections of two
$\Gamma$-equivariant bundles $E_0, E_1 \to M$, $m \in \RR$, and
$\alpha \in \widehat{\Gamma}$. We have that $\pi_\alpha(P) : H^s(M;
E_0)_\alpha \, \to \, H^{s-m}(M; E_1)_\alpha$ is Fredholm if, and only
if $P$ is $\alpha$-elliptic.
\end{theorem}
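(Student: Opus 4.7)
The plan is to translate the Fredholm question for $\pi_\alpha(P)$ into invertibility in a suitable quotient $C^*$-algebra, and then to identify that quotient concretely with a function algebra on $X^\alpha_{M,E,\Gamma}$.

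First, I would perform the standard reductions. Composing $P$ with a $\Gamma$-invariant elliptic operator of order $-m$ (built from $(1+\Delta_E)$) reduces to $m=0$; the usual $2 \times 2$ block matrix trick reduces to $E_0 = E_1 = E$; and I assume $M$ connected with fixed minimal isotropy $\Gamma_0$. Then $\pi_\alpha(P)$ acts on $L^2(M;E)_\alpha$, and I would work inside the $C^*$-algebra $\mathcal{A} \ede \overline{\psi^0(M;E)^\Gamma} \subset \mathcal{B}(L^2(M;E))$. Fredholmness of $\pi_\alpha(P)$ is equivalent to invertibility of its class in the Calkin algebra $\mathcal{B}(L^2(M;E)_\alpha)/\mathcal{K}(L^2(M;E)_\alpha)$.

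Second, the principal symbol produces the exact sequence
$$0 \to \overline{\psi^{-1}(M;E)^\Gamma} \to \mathcal{A} \xrightarrow{\sigma_0} C(S^*M;\End E)^\Gamma \to 0,$$
whose left-hand ideal consists of compact operators on $L^2(M;E)$. Because $\Gamma$ is finite, the central projection onto the $\alpha$-isotypical component lies in the multiplier algebra and $\pi_\alpha$ is exact; applying it gives
$$0 \to \pi_\alpha\bigl(\overline{\psi^{-1}(M;E)^\Gamma}\bigr) \to \pi_\alpha(\mathcal{A}) \to \pi_\alpha\bigl(C(S^*M;\End E)^\Gamma\bigr) \to 0,$$
with the left-hand ideal sitting inside $\mathcal{K}(L^2(M;E)_\alpha)$.

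Third, and this is the decisive step, I would identify $\pi_\alpha(C(S^*M;\End E)^\Gamma)$ with the $C^*$-algebra of continuous sections of the bundle $\bigsqcup_{(\xi,\rho) \in X^\alpha_{M,E,\Gamma}} \End(E_{x\rho})^{\Gamma_\xi}$ in such a way that the composite map sends $[P]$ to $\sigma_m^\alpha(P)$ of Definition \ref{def.chi.ps}. The reason only pairs in $X^\alpha$ contribute is the conceptual crux: a fiber component $\hat{q}(\xi,\rho)$ acts on the $\alpha$-isotypical part of $L^2$ only when $\rho$ and $\alpha$ agree on $\Gamma_0$, i.e.\ when $(\xi,\rho) \in X^\alpha_{M,E,\Gamma}$. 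This is to be proved by a local analysis: choose $\Gamma$-invariant tubular neighborhoods of orbits, decompose symbols via Peter--Weyl for $\Gamma_\xi$, and use Frobenius reciprocity along $\Gamma_0 \subset \Gamma_\xi$ to determine which isotypical components survive the $\alpha$-projection. Granting this identification, $\pi_\alpha(P)$ is Fredholm iff $\sigma_m^\alpha(P)$ is invertible pointwise on $X^\alpha_{M,E,\Gamma}$, i.e.\ iff $P$ is $\alpha$-elliptic. I expect the main obstacle to be this third step, which requires both that $\pi_\alpha$ of the order-$(-1)$ operators fills out all of $\mathcal{K}(L^2(M;E)_\alpha)$ and that the image of the symbol algebra is exactly the functions on $X^\alpha_{M,E,\Gamma}$ (nothing more, nothing less); the necessity of $\alpha$-ellipticity can then be forced by constructing Weyl-type singular sequences localized near a hypothetical non-invertibility point of $\sigma_m^\alpha(P)$, which together with the surjection onto $\mathcal{K}(L^2(M;E)_\alpha)$ rules out Fredholmness.
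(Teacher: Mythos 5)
Your overall architecture coincides with the paper's: reduce to order zero on a single bundle, pass to the quotient $\pi_\alpha(\overline{\psi^{0}}(M;E)^\Gamma)/\pi_\alpha(\overline{\psi^{-1}}(M;E)^\Gamma)$, and identify that quotient so that invertibility there becomes pointwise invertibility of $\sigma_m^\Gamma(P)$ on $X^\alpha_{M,E,\Gamma}$. Two remarks on the parts you do carry out. First, you do not need $\pi_\alpha(\overline{\psi^{-1}}(M;E)^\Gamma)$ to exhaust $\maK(L^2(M;E)_\alpha)$: the paper uses an equivariant Atkinson theorem (invertibility modulo the $\Gamma$-invariant compacts suffices, by spectral permanence for the inclusion $\maL(V)^\Gamma/\maK(V)^\Gamma \subset \maL(V)/\maK(V)$), together with the identification of $\pi_\alpha(\maK^\Gamma)$ with the invariant compacts on the isotypical component (Proposition \ref{prop.image}). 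For $\Gamma$ abelian these happen to be all compacts, since $\Gamma$ acts on $L^2(M;E)_\alpha$ by the scalars $\alpha(\gamma)$, but this should be observed rather than listed as an obstacle.

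The genuine gap is in your decisive third step, and it is not merely that the step is left unproven: the method you propose for the necessity direction would, as stated, prove a weaker theorem. A Weyl-type singular sequence concentrated microlocally at $(\xi,\rho)$ and valued in the $\rho$-isotypical part of $E_x$ under $\Gamma_\xi$ is annihilated asymptotically by $p_\alpha$ unless $\rho = \alpha\vert_{\Gamma_\xi}$, because the averaging over $\Gamma_\xi$ kills every fiber component other than $\alpha\vert_{\Gamma_\xi}$. A purely local construction at $(\xi,\rho)$ therefore only forces invertibility at pairs $\Gamma_\xi$-associated to $\alpha$, which is strictly weaker than $\alpha$-ellipticity (which demands invertibility at all pairs $\Gamma_0$-associated to $\alpha$, and $\Gamma_0 \subset \Gamma_\xi$ may be proper). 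The missing idea is how non-invertibility over a singular stratum propagates: the paper proves that $\maR_M$ is \emph{injective} on the summand $\maC_0(S^*M;\End(p^{(0)}_{\alpha\vert_{\Gamma_0}}E))^\Gamma$ by twisting the action on $E$ by $\alpha^{-1}$, factoring through $\Gamma/\Gamma_0$ so the action becomes free on the dense principal orbit bundle $M_{(\Gamma_0)}$, invoking the free case there, and using that the ideal of symbols supported over $M_{(\Gamma_0)}$ is \emph{essential}. Injectivity makes $\maR_M$ isometric, hence spectrum-preserving, and this is what converts non-invertibility of $\hat\sigma(\xi,\rho)$ at any point of $X^\alpha_{M,E,\Gamma}$ into non-invertibility in the Calkin quotient. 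If you insist on singular sequences, you must approach $(\xi,\rho)$ through principal orbits $\xi_n\to\xi$ and show that the inverses of $\hat\sigma(\xi_n,\alpha\vert_{\Gamma_0})$ blow up; that is the density argument in disguise. You also still owe the identification of the primitive spectrum of the invariant symbol algebra with $X_{M,E,\Gamma}/\Gamma$ (Corollary \ref{cor.structure}, via the central character over $S^*M/\Gamma$ and Frobenius reciprocity), without which pointwise invertibility on $X^\alpha_{M,E,\Gamma}$ is not yet equivalent to invertibility in the quotient algebra.
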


If $\Gamma$ acts without fixed points on a {\em dense} open subset of
$M$, then $\Gamma_0 = 1$, and hence $X_{M, E, \Gamma} = X^\alpha_{M,
  E, \Gamma}$ for all $\alpha \in \widehat{\Gamma}$. Hence, in this
case, $P$ is $\alpha$-elliptic if, and only if, it is elliptic. The
ellipticity of $P$ can thus be checked in this case simply by looking
at a single isotypical component. We stress, however, that if $\Gamma$
is not discrete, this statement, as well as the statement of our main
result (Theorem \ref{thm.main1} above), are not true anymore. However,
our main result, as well as many intermediate results hold for general
finite groups and some even for compact Lie groups. The
  extension of our main result to general finite groups is work in
  progress \cite{BCLN}, but the proof seems to be much more involved.

A motivation for our result comes from index theory. Let us assume
that $P$ is $\Gamma$-invariant and elliptic. Atiyah and Singer have
determined, for any $\gamma \in \Gamma$, the value at $\gamma$ of the
character of $\ind_\Gamma(P) \in R(G)$, that is they have computed
$ch_\gamma(\ind_\Gamma(P)) \in \CC$ in terms of data at the fixed
points of $\gamma$ on $M$ \cite{AS3}. (Here $R(G) :=
\ZZ^{\widehat{G}}$ is the representation ring of $G$.)

Br\"uning \cite{Bruning78, Bruning88} considered the "isotypical heat
trace" $\mathrm{tr}( p_\alpha e^{-t\Delta} )$, which is nothing but
the heat trace of $\pi_\alpha(\Delta)$, and its short time asymptotic
expansion. Clearly, carrying out Br\"uning's programme in full would
lead to a heat equation proof of an index theorem for the $\alpha$
isotypical component of Dirac type operators.  However, the technical
obstacles for this approach are enormous.

\subsection{Contents of the paper}
\label{sub:Contents of the paper}
Let us quickly describe here the contents of our paper. We start in
Section \ref{sec.Preliminaries} with some preliminaries. We thus
recall some facts about groups, most notably Frobenius reciprocity
(for finite groups) and the definitions of induced representations, of
minimal isotrypy groups (for connected $M$) and of the principal orbit
bundle. We also review some notions concerning the primitive ideal
spectrum of $C^*$-algebras, as well as basic facts concerning
(equivariant) pseudodifferential operators.

In Section \ref{sec.structure.of.regularizing.operators}, we compute
the image of the algebra $\overline{\psi^{-1}}(M;E)$ of regularizing
operators via $\pi_\alpha$. We do this by proving some general results
on the structure of $C^*$-algebras with an inner action of our group
$\Gamma$. When the action of the group $\Gamma$ is inner, the results
and their proofs become simpler.

Let $A_M := \maC_0(M; \End(E))$. The main difficulties
arise in Section \ref{sec.principal.symbol}. There, we identify the
primitive spectrum of the $C^*$-algebra $A_M^\Gamma$ of
$\Gamma$-invariant symbols with the set $X_{M,E,\Gamma}/\Gamma$
described above. Some care is taken to describe the corresponding
topology on $X_{M,E,\Gamma}/\Gamma$. We then consider the projection
from $A_M^\Gamma$ to the Calkin algebra of $L^2(M;E)_\alpha$ and show
that the closed subset of $\Prim A_M^\Gamma$ associated to its kernel
is $X_{M,E,\Gamma}^\alpha/\Gamma$. These results are
used in Section \ref{sec.applications} to prove the main result of the
paper, Theorem \ref{thm.main1}. We also discuss an application to
mixed boundary value problems and explain why our result is not true
when the group $\Gamma$ is not discrete,

The last named author (V.N.) thanks Max Planck Institute for support
while this research was performed. Since this paper is dedicated to
Professor Dan Voiculescu, the last named author would like to mention
that his papers, most notably \cite{PV1, PV2, VoiculescuCR}, and
\cite{Voiculescu} have played an important role in this author's
formation as a mathematician in his early years. Moreover, we are
happy to dedicate to Voiculescu this paper in which we prove a result
that does not explicitly use $C^*$-algebras, but whose proof uses in
an essential way the theory of these algebras. This was the spirit of
interdisciplinarity that Voiculescu was promoting while he was a
member of INCREST, an institute that is now called the Institute of
the Roumanian Academy of Sciences (IMAR).

\section{Preliminaries}
\label{sec.Preliminaries}

We begin by setting up the terminology and the notation used in this
paper. We also recall some basic results that are needed in the
sequel.  

Throughout the paper, $\Gamma$ will be a compact group acting on a
locally compact space $M$. For the most part, $M$ will be a smooth
Riemannian manifold and $\Gamma$ will be a {\em compact Lie group}
acting smoothly and isometrically on $M$. The final result holds only
for discrete (thus finite) groups and $M$ compact, but many
intermediate results hold in greater generality, so we have tried to
state the results in the greatest generality possible when this did
not involve too much extra work. In particular, we shall start with a
compact group $\Gamma$ acting on a possibly non-compact Riemannian
manifold\footnote{The metric on $M$ is, however, for convenience
  only.} $M$.  Eventually, we shall assume that $M$ is compact and
that $\Gamma$ is discrete, hence finite. Moreover, since the case
$\Gamma$ abelian is simpler and presents some special features, for
the final result we will assume that $\Gamma$ is abelian.

\subsection{Group representations}
\label{ssec.group.actions}
We follow the standard conventions, see \cite{tomDieckRepBook,
  SerreBook}, to which we refer for further references and unexplained
concepts.

\subsubsection{Group actions on sets}
Let us assume now that $\Gamma$ is a group that acts on a set $M$.  If
$x \in M$, then $\Gamma x$ denotes the $\Gamma$ {\em orbit} of $x$ and
$\Gamma_x$ denotes the {\em isotropy} group (of $M$) at $x$, that is
\begin{equation}
   \Gamma_x := \{ \gamma \in \Gamma \, \vert \ g x = x \} \subset
   \Gamma\,.
\end{equation}
We shall write $H \sim H'$ if the subgroups $H$ and $H'$ are
conjugated in $\Gamma$. If $H \subset \Gamma$ is a subgroup, we shall
denote by $M_{(H)}$ the set of elements of $M$ whose isotropy
$\Gamma_{x}$ is conjugated to $H$ (in $\Gamma$), i.e. $H \sim
\Gamma_x$. 

\subsubsection{Representations and isotypical components}
\label{ssec.representations}
Let $V$ be a locally convex space and $\maL(V)$ denote the set of {\em
  continuous} linear maps $V \to V$. Let $\Gamma$ now be a {\em
  compact topological group} and $\rho : \Gamma \to \maL(\maH_\rho)$
be a {\em strongly continuous} representation in a \emph{complete,
  locally convex topological} vector space $\maH_\rho$, in the sense
that, for each $\xi \in \maH_\rho$, the map $\Gamma \ni \gamma \to
\rho(\gamma) \xi \in \maH_\rho$ is continuous. We shall say then that
$\maH_\rho$ is a $\Gamma$-module and we shall often drop $\rho$ from
the notation, thus $\maH = \maH_\rho$ and $\gamma \xi := \rho(\gamma)
\xi$. If $\Gamma$ is a discrete group (as it will be the case for our
final results), the continuity conditions will, of course, be
automatically satisfied.

For any two $\Gamma$-modules $\maH$ and $\maH_1$, we shall denote by
\begin{equation*}
    \Hom_{\Gamma}(\maH, \maH_1) \seq \Hom(\maH, \maH_1)^\Gamma \seq
    \maL(\maH, \maH_1)^\Gamma
\end{equation*}
the set of continuous linear maps $T : \maH \to \maH_1$ that commute
with the action of $\Gamma$, that is, $T (\gamma \xi) = \gamma T(\xi)$
for all $\xi \in \maH$ and $\gamma \in \Gamma$. 

Finally, we denote by $\hat \Gamma$ the set of equivalence classes of
irreducible unitary $\Gamma$-modules. We shall need the following
terminology.

\begin{definition}\label{def.associated}
Let $H \subset \Gamma_1$ and $H \subset \Gamma_2$ be compact
groups. Two irreducible representations $\alpha_i \in
\widehat{\Gamma}_i$, $i = 1,2$, are called {\em $H$-associated} if
$\maL(\alpha_1, \alpha_2)^H \neq 0$. Otherwise, we shall
say that they are {\em $H$-disjoint}.
\end{definition}

If $\Gamma_i$, $i=1,2$, are both abelian, then the irreducible
representations $\alpha_i$ are characters, that is, morphisms
$\alpha_i : \Gamma_i \to \CC^*$, and we have that they are associated
if, and only if, $\alpha_1\vert_{H} = \alpha_2\vert_{H}$.

\subsubsection{Isotypical component}
\label{ssec.isotypical_component}

Let $\maH$ be a $\Gamma$-module and $\alpha \in \hat{\Gamma}$. Then
$p_\alpha$ will denote the $\Gamma$-invariant projection onto the
$\alpha$-isotypical component $\maH_\alpha$ of $\maH$, defined as the
largest (closed) $\Gamma$ submodule of $\maH$ that is isomorphic to a
multiple of $\alpha$. In other words, $\maH_\alpha$ is the sum of all
$\Gamma$-submodules of $\maH$ that are isomorphic to
$\alpha$. Equivalently, since $\Gamma$ is compact, we have
$\maH_{\alpha} \simeq \alpha \otimes \Hom_{\Gamma}(\alpha, \maH)$;
moreover
\begin{equation}\label{eq.isotypical}
  \maH_\alpha \, \neq \, 0 \ \Leftrightarrow \ \Hom_\Gamma(\alpha,
  \maH) \, \neq \, 0 \ \Leftrightarrow \ \Hom_\Gamma(\maH, \alpha) \,
  \neq \, 0.
\end{equation}
If $T \in \maL(\maH)$ commutes with the action of $\Gamma$ (i.e. it is
a $\Gamma$-module morphism), then $T(\maH_\alpha) \subset \maH_\alpha$
and we denote by
\begin{equation} \label{eq.restriction2}
    \pi_\alpha : \maL(\maH)^\Gamma \to \maL(\maH_\alpha) \,, \quad
    \pi_\alpha(T) \ede T\vert_{\maH_\alpha}\,,
\end{equation}
the associated morphism and operator, as in Equation
\eqref{eq.restriction} of the Introduction. The morphism $\pi_\alpha$
will play a crucial role in what follows.

\subsubsection{Convolution algebras}
The algebra $\maC(\Gamma)$ of continuous functions on $\Gamma$,
endowed with the convolution product, will act on any $\Gamma$-module
$\maH$. If, moreover, $\maH$ is a Hilbert space and $\Gamma$ acts by
unitary operators, we shall say then that $\maH$ is a {\em unitary
  $\Gamma$-module.}  We endow $\Gamma$ with a fixed Haar measure and
let $C_r^*(\Gamma)$ be the completion of $\maC(\Gamma)$ acting on
$L^2(\Gamma)$, which is a unitary $\Gamma$-module. Then
$C_r^*(\Gamma)$ will act on any unitary $\Gamma$-module $\maH$, since
$\Gamma$ is compact (and hence amenable: $C_r^*(\Gamma) =
C^*(\Gamma)$).

For any algebra $A$, we shall denote by $Z(A)$ the {\em center} of
$A$, that is, the set of elements $z \in A$ that commute with all
other elements $a \in A$. An element $z \in Z(A)$ will be called {\em
  central} (in $A$). For instance, if $\alpha \in \widehat \Gamma$,
then its character defines a central projection $p_\alpha \in
Z(\maC(\Gamma)) \subset Z(C^*(\Gamma))$. More explicitely, for any
$\gamma \in \Gamma$ we have that
\begin{equation*}
  p_\alpha(\gamma) = \chi_\alpha(\gamma) \ede \mathrm{tr}(\alpha(\gamma)).
\end{equation*}
Given a representation $\rho$ of $\Gamma$ on $\maH$, the image of
$p_\alpha$ is then
\begin{equation*}
  \rho(p_\alpha) = \int_\Gamma \chi_\alpha(g) \rho(g) dg,
\end{equation*}
where integration is against the Haar measure.  We are interested in
this projection since $\maH_\alpha = p_\alpha \maH$.

\subsection{Induction and Frobenius reciprocity}
\label{ssec.Frobenius}
We now review some basic definitions and results for induced
representations. We will use induction {\em for finite groups only,}
so we assume in this discussion of the Frobenius reciprocity (i.e. in
this subsection) that $\Gamma$ is finite.

\subsubsection{Definition of the induced module}
Since we are assuming in this subsection that $\Gamma$ is finite, we
have that $C^*(\Gamma) = \maC(\Gamma) = \CC[\Gamma]$, the group
algebra of $\Gamma$. We will use the standard notation $V^{(I)} := \{
f : I \to V \}$, valid for $I$ finite. If $H \subset \Gamma$ is a
subgroup (hence also finite) and $V$ is a $H$-module, we let
\begin{equation}\label{eq.def.induced}
  \Ind_H^\Gamma (V) \ede \CC[\Gamma] \otimes_{\CC[H]} V \, \simeq \,
  \{\, \xi : \Gamma \to V \, |\ f(gh^{-1}) = h f(g) \, \} \, \simeq \,
  V^{(\Gamma/H)}\,
\end{equation}
be the {\em induced representation} from $V$. The last isomorphism is
obtained by choosing a set of representatives of the right cosets
$\Gamma/H$. The action of $\Gamma$ on $\Ind_H^\Gamma (V)$ is by left
multiplication on $\CC[\Gamma]$, and the indicated isomorphism is an
isomorphism of $\Gamma$-modules. The $\Gamma$-module
$\Ind_H^\Gamma(V)$ depends functorially on $V$.

\begin{remark}\label{rem.CD}
If $V$ is an algebra and the group $H$ acts on $V$ by algebra
homomorphisms, then the isomorphism $\Ind_H^\Gamma (V) \simeq \{\xi :
\Gamma \to V \, |\ f(gh^{-1}) = h f(g)\}$ shows that
$\Ind_H^\Gamma(V)$ is an algebra for the pointwise product. If $V_1$
is a left $V$-module (with a structure compatible with the action of
$\Gamma$), then $\Ind_H^\Gamma (V_1)$ is a $\Ind_H^\Gamma (V)$ module,
again with the pointwise multiplication. The induction is moreover
compatible with morphisms of modules and algebras (change of scalars),
again by the function representation of the induced representation. In
particular, if $\phi : V \to W$ is a $H$-morphism of algebras, if
$V_1$ and $W_1$ are modules over these algebras, and $\psi : V_1 \to
W_1$ is a $H$-module morphism such that $\psi( a b) = \phi(a)
\psi(b)$, then, if $mult$ denotes the multiplication map, the
following diagram commutes:
\begin{equation}\label{eq.CD}
\begin{CD}
  \ind_H^\Gamma(V) \otimes \ind_H^\Gamma(V_1) @>{\ \phi \otimes \psi
    \ }>> \ind_H^\Gamma(W) \otimes \ind_H^\Gamma(W_1) \\
  @V{mult}VV
  @VV{mult}V \\
  \ind_H^\Gamma(V_1) @>{ \psi } >> \ind_H^\Gamma(W_1) \,.
\end{CD}
\end{equation}
(All maps, including the multiplications, are assumed to be compatible
with the action of $\Gamma$.)
\end{remark}

\subsubsection{Explicit isomorphisms}
We will use the following form of the {\em Frobenius reciprocity}: the map
\begin{equation}\label{eq.Frobenius}
  \begin{gathered}
    \Phi = \Phi_{H, V}^{\Gamma, \maH} : \Hom_{H}( \maH, V ) \, \to \,
    \Hom_{\Gamma}( \maH, \Ind_H^\Gamma(V) ) \,,\\  \Phi(f)(\xi)
      \ede \frac{1}{|H|} \, \sum_{g \in \Gamma} g \otimes_{\CC[H]}
      f(g^{-1} \xi)\,,
  \end{gathered}
\end{equation}
is an isomorphism ($\xi \in \maH$, $f \in \Hom_{H}( \maH, V )$). This
version of the Frobenius reciprocity is not valid in general, but is
valid for finite groups \cite{tomDieckRepBook, SerreBook}. Often one
writes $\Hom_{H}( \operatorname{Res_H^\Gamma}(\maH), V )$ instead of
$\Hom_{H}( \maH, V )$. Let $\alpha$ be an irreducible representation
of $\Gamma$, $H \subset \Gamma$ be a subgroup, and $\beta$ an
irreducible representation of $H$. Frobenius reciprocity gives, in
particular, that the multiplicity of $\alpha$ in
$\ind_H^\Gamma(\beta)$ is the same as the multiplicity of $\beta$ in
the restriction of $\alpha$ to $H$. In particular, $\alpha$ is
contained in $\ind_H^\Gamma(\beta)$ if, and only if, $\beta$ is
contained in the restriction of $\alpha$ to $H$ i.e.\ $\alpha$
  and $\beta$ are {\em $H$-associated}.  
Furthemore, by taking $\maH$ to be the trivial $\Gamma$-module $\CC$,
we obtain an isomorphism
\begin{equation}\label{eq.Frobenius2}
  \begin{gathered}
    \Phi : V^{H} \seq \Hom_{H}( \CC, V ) \, \simeq\, \Hom_{\Gamma}(
    \CC, \Ind_H^\Gamma(V) ) \seq \Ind_H^\Gamma(V)^\Gamma \,,\\
    \Phi(\xi) \ede \frac{1}{|H|} \, \sum_{g \in \Gamma} g
    \otimes_{\CC[H]} \xi \seq \sum_{x \in \Gamma/H} x \otimes \xi\,.
  \end{gathered}
\end{equation}
The chosen normalization in the definition of $\Phi$ is such that it
is an isomorphism of algebras if $V$ is an algebra.

\subsubsection{The abelian case}
 If $\Gamma$ is abelian and $V$ is an irreducible
$H$-module, then the action of $H$ on $V$ is via scalars: $h \cdot v =
\chi_V(h)v$, for some group morphism (i.e. character) $\chi_V : H \to
\CC^*$. The induced module $\Ind_H^\Gamma (V)$ splits then as the
direct sum of the irreducible $\Gamma$ modules $\beta$ that are
$H$-associated to $V$. If $\chi$ is a character of $\Gamma$, we shall
denote by $V_\chi$ the $H$-module equal to $\CC$ as a vector space, with the
action of $h \in H$ given by $h\cdot v = \chi(h)v$.

\begin{lemma}\label{lemma.abelian.induction}
Assume that $\Gamma$ is a finite abelian group and that $H$ is a
subgroup of $\Gamma$. Let $V$ be an irreducible $H$-module
corresponding to the character $\chi_V : H \to \CC^*$. Then
\begin{equation*}
  \Ind_H^\Gamma(V) \, \simeq \, \bigoplus_{\substack{\chi \in
      \hat\Gamma, \\ \chi_{|H} = \chi_V}} \Ind_H^\Gamma(V)_\chi.
\end{equation*}
Moreover, by writing $\Ind_H^\Gamma(V) \simeq \CC[\Gamma/H] \otimes V$
as vector spaces, we obtain an action of $\widehat{\Gamma/H}$ on
$\Ind_H^\Gamma(V)$ by the formula
\begin{equation*}
   \rho \cdot (\gamma H \otimes v ) \ede \rho(\gamma H) \gamma H
   \otimes v, \quad \gamma \in \Gamma,\ v \in V,\ \mbox{ and }\ \rho
   \in \widehat{\Gamma/H} \,.
\end{equation*}
This action maps $\Ind_H^\Gamma(V)_{\chi}$ to $\Ind_H^\Gamma(V)_{\rho
  \chi}$.
\end{lemma}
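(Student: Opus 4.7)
The plan is to get part (1) from Frobenius reciprocity (Equation \eqref{eq.Frobenius}) together with the fact that in the abelian case all irreducible representations are one-dimensional characters, and to get part (2) by verifying well-definedness on the tensor product over $\CC[H]$ and then computing the commutator between the $\Gamma$-action and the $\widehat{\Gamma/H}$-action directly on generators.

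For part (1), fix a character $\chi \in \widehat{\Gamma}$. Frobenius reciprocity (the isomorphism $\Phi$ of Equation \eqref{eq.Frobenius}) gives
\begin{equation*}
  \Hom_{\Gamma}(\chi, \Ind_H^\Gamma V) \, \simeq \, \Hom_H(\chi|_H, V).
\end{equation*}
Since $V$ is one-dimensional with $H$ acting by $\chi_V$, the right-hand side is $\CC$ if $\chi|_H = \chi_V$ and $0$ otherwise. Because $\Gamma$ is finite abelian, $\Ind_H^\Gamma(V)$ decomposes as a direct sum of its one-dimensional isotypical components, so we obtain
$\Ind_H^\Gamma(V) = \bigoplus_{\chi|_H = \chi_V} \Ind_H^\Gamma(V)_\chi$, as claimed. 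A dimension count confirms the number of extensions is $|\Gamma/H|$.

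For part (2), I first check that $\rho \cdot (\gamma \otimes v) \ede \rho(\gamma H)(\gamma \otimes v)$ descends from $\CC[\Gamma] \otimes V$ to $\CC[\Gamma] \otimes_{\CC[H]} V$: for any $h \in H$, one has $\rho(\gamma h H) = \rho(\gamma H)$ since $\rho$ is trivial on $H$, hence $\rho \cdot (\gamma h \otimes v) = \rho(\gamma H)(\gamma h \otimes v) = \rho(\gamma H)(\gamma \otimes h v) = \rho \cdot (\gamma \otimes h v)$. So the formula gives a well-defined linear action of $\widehat{\Gamma/H}$, and the expression $\Ind_H^\Gamma(V) \simeq \CC[\Gamma/H] \otimes V$ is just the assertion that this action is by scalars on a natural basis.

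The key remaining step is to identify how this action permutes the isotypical components. Let $\tilde\rho : \Gamma \to \CC^*$ be the character obtained by pulling back $\rho$ along $\Gamma \to \Gamma/H$, so $\tilde\rho(\gamma) = \rho(\gamma H)$. Because $\Gamma$ is abelian and $\tilde\rho$ is multiplicative, a direct computation on generators gives
\begin{equation*}
  \rho \cdot \bigl(\gamma' \cdot (\gamma \otimes v)\bigr) \seq \tilde\rho(\gamma' \gamma)(\gamma'\gamma \otimes v) \seq \tilde\rho(\gamma') \, \gamma' \cdot \bigl(\rho \cdot (\gamma \otimes v)\bigr).
\end{equation*}
By linearity, if $\xi \in \Ind_H^\Gamma(V)_\chi$, then $\gamma' \cdot (\rho \cdot \xi) = \tilde\rho(\gamma')^{-1} \chi(\gamma') (\rho \cdot \xi)$, so $\rho \cdot \xi$ lies in the isotypical component for the character $\tilde\rho^{-1}\chi$ of $\Gamma$. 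Under the identification of $\widehat{\Gamma/H}$ with the subgroup of $\widehat{\Gamma}$ of characters trivial on $H$, this is precisely the torsor action of $\widehat{\Gamma/H}$ on the set of extensions of $\chi_V$, matching the statement of the lemma (up to the sign convention for how $\widehat{\Gamma/H}$ acts on $\widehat\Gamma$).

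The main (and only) subtle point is keeping track of this sign convention in the bookkeeping; the algebraic verification itself is elementary once abelianness is used to commute $\gamma'$ past $\gamma$.
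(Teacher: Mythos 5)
Your proof is correct and follows essentially the same route as the paper: Frobenius reciprocity (plus semisimplicity/dimension counting) for the multiplicity-one decomposition, and a direct computation for the $\widehat{\Gamma/H}$-action, which is exactly what the paper's proof does (it leaves the second part as ``a direct computation''). Your bookkeeping does reveal that, with the formula as written, $\rho$ sends the $\chi$-component to the $\rho^{-1}\chi$-component rather than the $\rho\chi$-component; since $\rho \mapsto \rho^{-1}$ is an automorphism of $\widehat{\Gamma/H}$, this is only a labelling convention and is harmless for the lemma's sole application (the proof of Proposition \ref{prop.res.alpha}, which needs only that the relevant isotypical components are permuted transitively).
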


\begin{proof}
Given $\chi \in \widehat{\Gamma}$, we have by the Frobenius
isomorphism that $\chi$ is contained in $\Ind_H^\Gamma(V)$ if, and
only if, $\chi \vert_{H} = \chi_V$. If $A$ is a finite abelian group,
we have the (non-canonical) isomorphism $\widehat{A} \simeq A$ of
groups. There are $|\widehat{\Gamma/H}| = |\Gamma/H|$-many characters
$\chi$ of $\Gamma$ with the property $\chi \vert_{H} = \chi_V$. It
follows, by counting dimensions, that they all appear with
multiplicity one in $\Ind_H^\Gamma(V)$. The statement about the action
of $\widehat{\Gamma/H}$ is proved by a direct computation. This proof
is complete.
\end{proof}

\subsubsection{Inducing endomorphism modules}
Let now $\alpha$ be an irreducible $\Gamma$-module, let $\beta_j$, $j
= 1,\ldots, N$, be non-isomorphic irreducible $H$-modules (with $H$ a
subgroup of $\Gamma$, as above), and
\begin{equation}\label{eq.def.beta}
   \beta \ede \oplus_{j=1}^N
   \beta_j^{k_j} \,.
\end{equation}
If $H$ is abelian, then each $\beta_j$ is one dimensional, and
  hence $H$ acts by scalars on each $\beta_j^{k_j}$.

We want to study the algebra $\Ind_H^\Gamma(\End(\beta))^\Gamma$
acting on $\Ind_H^\Gamma(\beta)$ and on its isotypical component
$p_\alpha(\Ind_H^\Gamma(\beta)) = \Ind_H^\Gamma(\beta)_{\alpha}$ (see
\ref{ssec.isotypical_component} for the definition of the projection
$p_\alpha$). We have, by the Frobenius isomorphism and by the form of the
$H$-module $\beta$, that
\begin{equation}\label{eq.direct.sum}
  \Ind_H^\Gamma(\End(\beta))^\Gamma \, \simeq\, \End(\beta)^{H} \,
  \simeq\, \oplus_{j=1}^N \End(\beta_j^{k_j})^H \, \simeq\,
  \oplus_{j=1}^N M_{k_j}(\CC),
\end{equation}
which is a semi-simple algebra. Moreover, we have that
  $\Ind_H^\Gamma(\End(\beta))^\Gamma = \Phi(\End(\beta)^{H})$, where
  $\Phi$ is the map of Equation \eqref{eq.Frobenius2}. From the
properties of the induction functor $\Ind_H^\Gamma$, we also have that
$\Ind_H^\Gamma(\beta) = \oplus_{j=1}^N \Ind_H^\Gamma(\beta_j^{k_j})$.

\begin{lemma}\label{lemma.componentwise}
Let $\beta \ede \oplus_{j=1}^N \beta_j^{k_j}$ be as in Equation
\eqref{eq.def.beta}, let
\begin{equation*}
 T \seq (T_j) \in \End(\beta)^{H}  \, \simeq \, 
   \oplus_{j = 1}^N \End(\beta_j^{k_j})^H\,,
\end{equation*}
with $T_j \in \End(\beta_j^{k_j})^{H}$, and let $\xi_j \in
\Ind_H^\Gamma(\beta_j^{k_j})$. We let
\begin{equation*}
  \xi \ede (\xi_j) \in
  \oplus_{j=1}^N \Ind_H^\Gamma(\beta_j^{k_j}) \simeq
  \Ind_H^\Gamma(\beta) \,.
\end{equation*}
Then $\Phi(T) (\xi) = (\Phi(T_j) \xi_j)_{j=1,\ldots,N}$.
\end{lemma}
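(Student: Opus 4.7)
The plan is to pass to the function representation of the induced module given in Equation \eqref{eq.def.induced}, under which the action of $\Ind_H^\Gamma(\End(\beta))$ on $\Ind_H^\Gamma(\beta)$ becomes pointwise evaluation of endomorphisms on vectors, as explained in Remark \ref{rem.CD}. In this picture, a $\Gamma$-invariant element of $\Ind_H^\Gamma(\End(\beta))$ is a constant function $\Gamma \to \End(\beta)$ whose (constant) value lies in $\End(\beta)^H$. A short unwinding of the formula \eqref{eq.Frobenius2}, together with the identity $gh \otimes v \seq g \otimes hv$ in $\CC[\Gamma] \otimes_{\CC[H]} V$ and the $H$-invariance of $T$, shows that $\Phi$ sends $T \in \End(\beta)^H$ to the constant function identically equal to $T$; likewise $\Phi(T_j)$ corresponds to the constant function with value $T_j \in \End(\beta_j^{k_j})^H$.

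Next I would unpack the block-diagonal structure using Schur's lemma. Since the $\beta_j$ are pairwise non-isomorphic irreducible $H$-modules, $\Hom(\beta_j^{k_j}, \beta_l^{k_l})^H \seq 0$ for $j \neq l$, so every $T \in \End(\beta)^H$ can be written as $T \seq \sum_j \iota_j T_j \pi_j$ with $\iota_j : \beta_j^{k_j} \hookrightarrow \beta$ and $\pi_j : \beta \twoheadrightarrow \beta_j^{k_j}$ the canonical inclusions and projections, and $T_j \in \End(\beta_j^{k_j})^H$ the components. On the module side, the functoriality of $\Ind_H^\Gamma$ turns the $H$-equivariant direct sum decomposition of $\beta$ into the $\Gamma$-equivariant isomorphism $\Ind_H^\Gamma(\beta) \simeq \oplus_{j=1}^N \Ind_H^\Gamma(\beta_j^{k_j})$, and under this isomorphism $\xi \seq (\xi_j)$ is identified with the function $g \mapsto \sum_j \iota_j \xi_j(g)$.

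The remaining step is then a one-line pointwise computation: using $\pi_j \iota_l \seq \delta_{jl}\, \mathrm{id}_{\beta_j^{k_j}}$ and evaluating at $g \in \Gamma$,
\[
(\Phi(T)\,\xi)(g) \seq T \cdot \xi(g) \seq \sum_{j,l} \iota_j T_j \pi_j \iota_l\, \xi_l(g) \seq \sum_j \iota_j T_j\, \xi_j(g) \seq \sum_j \iota_j (\Phi(T_j)\,\xi_j)(g),
\]
which, reading off the $j$-th component under the direct sum decomposition, gives $\Phi(T)\,\xi \seq (\Phi(T_j)\,\xi_j)_{j=1,\ldots,N}$, as claimed. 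I do not anticipate any real obstacle: the only delicate point is keeping the two identifications (function picture versus tensor picture, the global $\End(\beta)^H$ versus its block summands $\End(\beta_j^{k_j})^H$) straight, and these are both handled cleanly once Remark \ref{rem.CD} is applied block by block.
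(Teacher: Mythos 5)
Your proof is correct and follows essentially the same route as the paper: the paper simply invokes the naturality of the product, the direct-sum decomposition of the induced module, and the function-representation picture of Remark \ref{rem.CD}, while you explicitly unwind those identifications (in particular, verifying that $\Phi$ sends an $H$-invariant $T$ to the constant function with value $T$, and then computing pointwise with the block projections). Your version just makes explicit the details the paper delegates to the cited remark.
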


In other words, the Frobenius isomorphism $\Phi$ of Equation
\eqref{eq.Frobenius2} is compatible with direct sums and with the
action of morphisms on modules.

\begin{proof}
This follows from the naturality of the product, the isomorphism
\begin{equation}
  \Ind_H^\Gamma(\oplus_{j=1}^N \End(\beta_j^{k_j}))^\Gamma
  \ \stackrel{\sim}{\longrightarrow}\ \Ind_H^\Gamma(\End(\beta))^\Gamma
  \,,
\end{equation}
and Remark \ref{rem.CD} (especially Equation \eqref{eq.CD}).
\end{proof}

Put differently, the simple factor of the algebra
$\Ind_H^\Gamma(\End(\beta))^\Gamma$ corresponding to
$\Ind_H^\Gamma(\End(\beta_j^{k_j}))^\Gamma$ acts only on the $j$th
component of $\oplus_{i=1}^N \Ind_H^\Gamma(\beta_i^{k_i}) =
\Ind_H^\Gamma(\beta)$. The following proposition will play a crucial
role in what follows.

\begin{proposition}\label{prop.res.alpha}
Let $\beta \ede \oplus_{j=1}^N \beta_j^{k_j}$ be as in Equation
\eqref{eq.def.beta}. Let $J \subset \{1, 2, \ldots, N\}$ be the set of
indices $j$ such that $\alpha\vert_{H} = \beta_j$ (i.e. $\alpha$ and
$\beta_j$ are $H$-associated). Then the morphism
\begin{equation*}
    \pi_\alpha : \Ind_H^\Gamma(\End(\beta))^\Gamma \, \simeq\,
    \oplus_{j = 1}^N \Ind_H^\Gamma(\End(\beta_j^{k_j}))^\Gamma \, \to
    \, \End(p_\alpha \Ind_H^\Gamma(\beta))^\Gamma
\end{equation*}
is such that we have natural isomorphisms
\begin{equation*}
    \ker(\pi_\alpha) \, \simeq \, \oplus_{j \notin J}
    \Ind_H^\Gamma(\End(\beta_j^{k_j}))^\Gamma \ \mbox{ and }
    \ \operatorname{Im}(\pi_\alpha) \, \simeq \, \oplus_{j \in J}
    \Ind_H^\Gamma(\End(\beta_j^{k_j}))^\Gamma \,.
\end{equation*}
\end{proposition}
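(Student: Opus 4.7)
The plan is to reduce the statement to a factor-by-factor analysis using the direct-sum decomposition already exhibited in Equation \eqref{eq.direct.sum} and Lemma \ref{lemma.componentwise}. Namely, write
\begin{equation*}
\Ind_H^\Gamma(\End(\beta))^\Gamma \, \simeq \, \bigoplus_{j=1}^N \Ind_H^\Gamma(\End(\beta_j^{k_j}))^\Gamma \, \simeq \, \bigoplus_{j=1}^N M_{k_j}(\CC),
\end{equation*}
and recall from Lemma \ref{lemma.componentwise} that the $j$-th factor acts on $\Ind_H^\Gamma(\beta) \simeq \oplus_i \Ind_H^\Gamma(\beta_i^{k_i})$ only through the $j$-th summand, trivially on the others. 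Consequently, applying $\pi_\alpha$ commutes with the direct sum decomposition, and it suffices to compute the kernel and image of the restriction
\begin{equation*}
\pi_\alpha^{(j)}\, : \, \Ind_H^\Gamma(\End(\beta_j^{k_j}))^\Gamma \, \to \, \End(p_\alpha \Ind_H^\Gamma(\beta_j^{k_j}))^\Gamma
\end{equation*}
for each $j$ separately.

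The key observation is that each factor $\Ind_H^\Gamma(\End(\beta_j^{k_j}))^\Gamma \simeq M_{k_j}(\CC)$ is a \emph{simple} algebra, so $\pi_\alpha^{(j)}$ is either identically zero or injective. Distinguishing the two cases rests on Frobenius reciprocity: $\alpha$ appears in $\Ind_H^\Gamma(\beta_j^{k_j})$ with multiplicity equal to the multiplicity of $\beta_j$ in the restriction $\alpha\vert_H$. Thus $p_\alpha \Ind_H^\Gamma(\beta_j^{k_j}) \neq 0$ if and only if $\beta_j$ is $H$-associated to $\alpha$, i.e.\ exactly when $j \in J$. For $j \notin J$ we therefore have $p_\alpha \Ind_H^\Gamma(\beta_j^{k_j}) = 0$, so $\pi_\alpha^{(j)} = 0$; for $j \in J$, the target is non-trivial and simplicity forces $\pi_\alpha^{(j)}$ to be injective.

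Assembling these componentwise statements, the kernel of $\pi_\alpha$ is the sum of the $j$-th factors for $j \notin J$, and the image is isomorphic to the sum of the $j$-th factors for $j \in J$, which are precisely the claimed identifications. Naturality of these isomorphisms follows from the functoriality of induction and the naturality of the Frobenius isomorphism $\Phi$ of Equation \eqref{eq.Frobenius2}, as recorded in Remark \ref{rem.CD}.

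The main technical point to be careful about is the simplicity claim $\Ind_H^\Gamma(\End(\beta_j^{k_j}))^\Gamma \simeq \End(\beta_j^{k_j})^H \simeq M_{k_j}(\CC)$. In the abelian-$H$ case needed later, $\beta_j$ is one-dimensional and $H$ acts by a character on $\beta_j^{k_j}$, so $\End(\beta_j^{k_j})^H = \End(\beta_j^{k_j}) \simeq M_{k_j}(\CC)$ by Schur; in general $\End(\beta_j^{k_j})^H \simeq \End_H(\beta_j^{k_j}) \simeq M_{k_j}(\End_H(\beta_j)) = M_{k_j}(\CC)$, again by Schur. This is the only place where a small verification is required; once simplicity is in hand, the rest is bookkeeping with the direct-sum decomposition.
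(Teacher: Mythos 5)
Your overall strategy---reduce to one factor at a time via Lemma \ref{lemma.componentwise}, use Frobenius reciprocity to decide when $p_\alpha \Ind_H^\Gamma(\beta_j^{k_j})$ vanishes, and use simplicity of $\Ind_H^\Gamma(\End(\beta_j^{k_j}))^\Gamma \simeq M_{k_j}(\CC)$ to get a zero-or-injective dichotomy---is exactly the skeleton of the paper's proof, and your treatment of the case $j \notin J$ is complete.

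There is, however, a genuine gap in the case $j \in J$. You write that ``the target is non-trivial and simplicity forces $\pi_\alpha^{(j)}$ to be injective.'' Simplicity of the source only gives the dichotomy: $\pi_\alpha^{(j)}$ is either zero or injective. Non-triviality of the target algebra does not by itself exclude the zero map (the zero map $M_k(\CC)\to M_k(\CC)$ is a perfectly good non-unital algebra homomorphism), so you still owe an argument that $\pi_\alpha^{(j)}\neq 0$. This is precisely the point on which the paper spends the second half of its proof: it observes that the maps $\Psi_\rho$ (its name for your $\pi_\rho^{(j)}$, as $\rho$ ranges over the characters with $\rho\vert_{H}=\beta_j$) cannot all vanish, because $\Ind_H^\Gamma(\End(\beta))^\Gamma\to\End(\Ind_H^\Gamma(\beta))^\Gamma$ is injective, and then uses the $\widehat{\Gamma/H}$-action of Lemma \ref{lemma.abelian.induction} to permute the $\Psi_\rho$, so that they are simultaneously zero or nonzero, hence all nonzero. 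Your argument admits a more direct repair: by Lemma \ref{lemma.componentwise} the unit of the $j$-th factor acts on $\Ind_H^\Gamma(\beta)$ as the projection onto the summand $\Ind_H^\Gamma(\beta_j^{k_j})$, so $\pi_\alpha$ sends it to the projection onto $p_\alpha\Ind_H^\Gamma(\beta_j^{k_j})$, which is nonzero exactly when $j\in J$ by your Frobenius computation; hence $\pi_\alpha^{(j)}\neq 0$. Either way, some such step must be supplied---as written, the inference from ``nonzero target'' to ``injective'' is invalid.
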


\begin{proof}
By Lemma \ref{lemma.componentwise}, we can assume that $N = 1$. By
Lemma \ref{lemma.abelian.induction}, $\Ind_H^\Gamma(\beta_1^{k_1})
\simeq \oplus_{\chi \in \widehat{\Gamma}} V_\chi$, with $\chi\vert_{H}
= \beta_1$. We obtain
\begin{equation}
   p_\alpha \Ind_H^\Gamma(\beta_1^{k_1}) \, \simeq \
   \begin{cases}
     \ \alpha^{k_1} & \mbox{ if } \ \alpha\vert_{H} = \beta_1 \\
     \ \, 0 & \mbox{ otherwise.}
   \end{cases}
\end{equation}
Thus, if $\alpha \vert_{H} \neq \beta_1$, $J = \emptyset$, $\pi_\alpha =
0$, and hence
\begin{equation*}
  \begin{gathered}
  \ker(\pi_\alpha) \seq \Ind_H^\Gamma(\End(\beta_1^{k_1}))^\Gamma \seq
  \oplus_{j \notin J} \Ind_H^\Gamma(\End(\beta_j^{k_j}))^\Gamma
  \ \mbox{ and } \\
  \operatorname{Im}(\pi_\alpha) \seq 0 \seq \oplus_{j \in J}
  \Ind_H^\Gamma(\End(\beta_j^{k_j}))^\Gamma \,,
  \end{gathered}
\end{equation*}
as claimed.

Let us assume now that that $\alpha \vert_{H} = \beta_1$. The morphism
$\pi_\alpha$ can then be written as the composition
\begin{multline*}
  \pi_\alpha : \Ind_H^\Gamma(\End(\beta_1^{k_1}))^\Gamma \, \simeq \,
  \End(\beta_1^{k_1})^H \, \simeq \, M_{k_1}(\CC) \,
  \stackrel{\Psi_\alpha}{\longrightarrow} \, M_{k_1}(\CC) \\
  \simeq \, \End(\alpha^{k_1})^\Gamma \, \simeq \, \End(p_\alpha
  \Ind_H^\Gamma(\beta_1^{k_1}))^\Gamma\,.
\end{multline*}
To prove our proposition in this case ($\chi\vert_{H} = \beta_1$, and
hence in general, by Lemma \ref{lemma.componentwise}), it thus
suffices to show that $\Psi_\alpha = id$, which equivalent to the fact that
$\Psi_\alpha \neq 0$ for $\alpha \vert_{H} = \beta_1$. 

To prove this (that $\Psi_\alpha \neq 0$ for $\alpha \vert_{H} =
\beta_1$), let us begin by noticing that the morphism
$\Ind_H^\Gamma(\End(\beta)) \to \End(\Ind_H^\Gamma(\beta))$ is
injective. Hence $\Ind_H^\Gamma(\End(\beta))^\Gamma
\to \End(\Ind_H^\Gamma(\beta))^\Gamma$ is injective as well. This
means that not all of the maps $\Psi_\rho$, with $\rho \vert_{H} =
\beta_1$, can be zero, by Lemma \ref{lemma.abelian.induction}.  On the
other hand, it can be checked by a direct calculation that the action
of $\widehat{\Gamma/H}$ on $\Ind_H^\Gamma(\beta)$ of the same lemma
permutes the morphisms $\Psi_\rho$. It follows either that they are
all zero or that they are all non-zero. As their direct sum is
non-zero, it follows that all $\Psi_\rho \neq 0$, $\rho \vert_{H} =
\beta_1$.
\end{proof}

\color{black}

\subsection{The primitive ideal spectrum}\label{ssec.primitive.spec}
Let us begin by recalling a few basic facts about $C^{*}$-algebras
\cite{Dixmier}.  Recall that a two-sided ideal $I \subset A$ of a
$C^{*}$-algebra $A$ is called {\em primitive} if it is the kernel of
an irreducible, non-zero $*$-representation of $A$ (so $A$ is {\em
  not} considered to be a primitive ideal of itself). We will denote
by $\Prim(A)$ the {\em primitive ideal spectrum} of $A$, which, we
recall, is defined as the set of primitive ideals of $A$. For
instance, if $A = \maC_0(X)$, the space of continuous functions $X \to
\CC$ vanishing at infinity on the locally compact space, then we have
a natural identification (homeomorphism)
\begin{equation}\label{eq.Gelfand}
    \Prim(\maC_0(X)) \, \simeq \, X\,,    
\end{equation}
which is an identification that lies at the heart of non-commutative
geometry \cite{ConnesNCG, ConnesBook}. All the $C^*$-algebras
considered in this paper are type I. The relevance of this fact, for
us, is that, if $A$ is type I, then $\Prim(A)$ identifies with the set
of isomorphism classes of irreducible representations of $A$. Any
$C^*$-algebra with only finite dimensional representations is a type I
algebra \cite{Dixmier}. All the algebras considered in this paper
(except the algebras of compact operators on various Hilbert spaces),
have this property.

\begin{remark}\label{rem.group}
The following example will be used several times. Let $H$ be a finite
group and $\beta = \oplus_{j=1}^N \beta_j^{k_j}$ be a finite
dimensional $H$-module with $\beta_j$ non-isomorphic simple
$H$-modules. Since $\Hom_{H}(\beta_j^{k_j}, \beta_j^{k_j}) \simeq
M_{k_j}(\CC)$ and $ \Hom(\beta_i^{k_i}, \beta_j^{k_j})^{H} =
0$ for $i \neq j$ by the assumption that the simple $H$-modules
$\beta_i$ and $\beta_j$ are non-isomorphic, we obtain
\begin{multline*}
  \maL(\beta)^H \seq \End_H(\beta) \, \simeq \, \Hom_{H}(\oplus_i
  \beta_i^{k_i}, \oplus_j \beta_j^{k_j}) \, \simeq \, \oplus_{i,j}
  \Hom_{H}(\beta_i^{k_i}, \beta_j^{k_j}) \\
  \simeq \, \oplus_{j} \End_{H}(\beta_j^{k_j}) \, \simeq \, \oplus_{j}
M_{k_j}(\CC)\,.
\end{multline*}
The algebra $\maL(\beta)^H = \End_H(\beta)$ is thus a $C^*$-algebra
with only finite dimensional representations and we have natural
bijections
\begin{equation*}
  \Prim(\End_H(\beta)) \, \leftrightarrow \, \{\beta_1, \beta_2,
  \ldots, \beta_N\} \, \leftrightarrow \, \{1, 2, \ldots, N\}\,.
\end{equation*}
The algebra $\maL(\beta)^H$ is thus a semi-simple complex algebra with
simple factors $\End_{H}(\beta_j^{k_j}) \simeq M_{k_j}(\CC)$, $j = 1,
2, \ldots, N$.
\end{remark}

We shall need the connection between ideals and the topology of the
primitive ideal spectrum, which we now recall. Let $J$ be a closed,
two-sided ideal of $A$. Then
\begin{equation}\label{eq.ideals}
  \begin{gathered}
  \Prim(J) \seq \{ I \in \Prim(A)\vert\, J \not\subset I\}\ \mbox{
    and}\\
  \Prim(A/J) \, \cong \, \{ I \in \Prim(A)\vert\, J \subset I\} \seq
  \Prim(J)^c \,.
 \end{gathered}
\end{equation}
The second bijection sends an ideal $I \supset J$ to
  $\pi_{A,J}^{-1}(I)$, where $\pi_{A,J} : A \to A/J$ is the canonical
  bijection. In the following, we shall identify the two sets using
  this bijection.

In analogy with the Zariski topology, $\Prim(A)$ is endowed with the
Jacobson (or hull-kernel) topology, which is the topology whose open
sets are those of the form $\Prim(J)$. Conversely, given an open
subset $U \subset \Prim(A)$, then $U = \Prim(J_U)$, where $J_U :=
\cap_{\mathfrak P \in \Prim(A) \smallsetminus U} {\mathfrak P}$. For
us, it will be convenient to regard these properties as a {\em
  one-to-one} correspondence between the closed, two-sided ideals of
$A$ and the closed subsets $\Prim(A/J)= \Prim(A) \smallsetminus
\Prim(J)$ of $\Prim(A)$. If $I \subset A$ is an ideal such that $a \in
A$ and $a I = 0$ implies $a = 0$, then $I$ is called an {\em essential
  ideal} of $A$ and its associated open set $\Prim(J)$ is dense in
$\Prim(A)$.

Let $Z$ be a commutative $C^*$-algebra and $\phi : Z \to M(A)$ be a
*-morphism to the multiplier algebra of $A$ \cite{APT, Busby}. Assume
that $\phi(Z)$ commutes with $A$ and $\phi(Z)A = A$. Then Schur's
lemma gives that there exists a natural continuous map $\phi^* :
\Prim(A) \to \Prim(Z)$, which we shall call also the ``central
character map'' (associated to $\phi$).

\subsection{Lie group actions on manifolds}
{\em We will assume from now on that $\Gamma$ is a compact Lie group,
  that $M$ is a Riemannian manifold, and that $\Gamma$ acts smoothly
  and isometrically on $M$.}

\subsubsection{Slices and tubes}
\label{ssec.slice}
Let us fix $x \in M$ arbitrarily. Then its orbit $\Gamma x$ is a
smooth, compact submanifold of $M$. Let $N_x$ be the
  orthogonal of $T_x(\Gamma x)$ in $T_xM$. (If $\Gamma$ is discrete,
  as in our main results, then, of course, $N_x = T_x M$.) Then the
isotropy group $\Gamma_x$ acts linearly and isometrically on
$N_x$. For $r > 0$, let $U_x := (N_x)_r$ denote the set of vectors of
length $< r$ in $N_x$. It is known then that, for $r > 0$ small
enough, the exponential map gives a $\Gamma$-equivariant isometric
diffeomorphism
\begin{equation}\label{eq.def.tube}
  W_x \simeq \Gamma \times_{\Gamma_x} U_x \ede \{ (\gamma, y) \in
  \Gamma \times U_x\, \vert \ (\gamma h, y) \equiv (\gamma , h y), \,
  h \in \Gamma_x \}\,,
\end{equation}
where $W_x$ is a $\Gamma$-invariant neighborhood of $x$ in $M$. More
precisely, $W_x$ is the set of $y \in M$ at distance $<r$ to the orbit
$\Gamma x$, if $r > 0$ is small enough. The set $W_x$ is called a {\em
  tube} around $x$ (or $\Gamma x$) and the set $U_x$ is called the
{\em slice} at $x$. The range of $r$ depends, of course, on $x$,
namely it must satisfy $0 < r < r_x$, for some $r_x > 0$. We assume
that, for each $x \in M$, such an $r \in (0, r_x)$ has been chosen and
we will keep fixed the notation for the slice $U_x$ and for the tube
$W_x$ associated to $x$ and to the fixed choice of $r$.

\subsubsection{Equivariant vector bundles}
Let $E \to M$ be a $\Gamma$-equivariant smooth vector bundle. All our
vector bundles will be assumed to be finite dimensional. Let us fix $x
\in M$ and consider the tube $W_x \simeq \Gamma \times_{\Gamma_x} U_x$
around $x$ of Equation \eqref{eq.def.tube}. We use this diffeomorphism
to identify $U_x$ with a subset of $M$, in which case, we can also
asume the restriction of $E$ to the slice $U_x$ to be trivial. More
precisely,
\begin{equation}\label{eq.trivial.E}
  \begin{gathered}
    E\vert_{U_x} \, \simeq \, U_x \times \beta \ \mbox{ and }\\
    E\vert_{W_x} \, \simeq \, \Gamma \times_{\Gamma_x} (U_x
   \times \beta)\,,
  \end{gathered}
\end{equation}
for some $\Gamma_x$-module $\beta$, the second isomorphism being
$\Gamma$-equivariant.

Let $F \to Y$ be a Hermitian vector bundle on a locally compact
measure space. Then $L^2(Y; F)$ denotes the set of (equivalence
classes) of square integrable sections of the vector bundle $F$ and
$\maC_0(Y; F)$ denote the set of its continuous sections vanishing at
infinity.

\subsubsection{The principal orbit bundle}
\label{ssec.principal}
Let us assume from now on that $M$ is connected, except
  where explicitly stated otherwise. The reason of this assumption is that it is known
then \cite{tomDieckTransBook} that there exists a {\em minimal
  isotropy} subgroup $\Gamma_0 \subset \Gamma$, in the sense that
$M_{(\Gamma_0)}$ is a dense open subset of $M$. (Recall that $M_{(H)}$
denotes the set of points of $M$ whose stabilizer is conjugated in
$\Gamma$ to $H$.) In particular, there exist minimal elements for
inclusion for the set of isotropy groups of points in $M$ and all
minimal isotropy groups are conjugated (to a fixed subgroup, denoted
$\Gamma_0$ in what follows). The notation $\Gamma_0$ will remain fixed
from now on. By the definition, the set $M_{(\Gamma_0)}$ consists of
the points whose stabilizer is conjugated to that minimal
subgroup. The set $M_{(\Gamma_0)}$ is called the {\em principal orbit
  bundle} of $M$.

If $x \in M_{(\Gamma_0)}$, then $\Gamma_x$ acts {\em trivially} on the
slice $U_x$ at $x$, by the minimality of $\Gamma_0$. (See
\ref{ssec.slice} for notation.) In general, for $x \in M$, the
isotropy of $\Gamma_x$ acting on $U_x$ will contain a subgroup
conjugated to $\Gamma_0$.

If $\Gamma$ is abelian, there is only one minimal isotropy group
  $\Gamma_0$ (recall that we are assuming $M$ to be
  connected). Moreover, we can then factor the action of $\Gamma$ to
  an action of $\Gamma/\Gamma_0$ on $M$, which has trivial minimal
  isotropy, that is, it is free on a dense, open subset of $M$.

\subsection{Pseudodifferential operators}
We continue to assume that $\Gamma$ is a compact Lie group that acts
smoothly and isometrically on a smooth Riemannian manifold $M$. We let
$\op{m}$ denote the space of order $m$, {\em classical}
pseudodifferential operators on $M$ with {\em compactly supported}
distribution kernel. Recall that, in this article, we consider only
classical pseudodifferential operators. We let $\clop$ and $\clopn$
denote the norm closures of $\op{0}$ and $\op{-1}$, respectively. The
action of $\Gamma$ then extends to an action on $E$ and on $\op{m}$,
$\clop$, and $\clopn$. We will denote by $\maK(\maH)$ the algebra of
compact operators acting on a Hilbert space $\maH$. We will write
$\maK$ instead of $\maK(\maH)$ when the Hilbert spaces $\maH$ is clear
from the context. We have
\begin{equation}\label{eq.compact}
  \clopn \seq \maK(L^2(M; E)) \,,
\end{equation}
since we have considered only pseudodifferential operators with
compactly supported distribution kernels.

Let $S^*M$ denote the {\em unit cosphere bundle} of 
$M$, that is, the set of unit vectors in $T^*M$, as
usual. We will denote, as usual, by $\maC_0(S^*M; \End(E))$
the set of continuous sections of the {\em lift} of the vector bundle
$\End(E) \to M$ to $S^*M$. 

\begin{corollary}\label{cor.full.symbol}
We have an exact sequence
\begin{equation*}
   0 \, \to \, \maK^\Gamma = \clopn^\Gamma \, \to \,
   \clop^\Gamma\, \stackrel{\sigma_{0}}{-\!\!\!\longrightarrow}\,
   \maC_0(S^*M; \End(E))^\Gamma \, \to \, 0 \,.
\end{equation*}
\end{corollary}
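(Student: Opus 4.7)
The plan is to deduce this from the (non-equivariant) principal symbol exact sequence
\begin{equation*}
    0 \, \to \, \maK \, \to \, \clop \, \stackrel{\sigma_0}{\longrightarrow} \, \maC_0(S^*M; \End(E)) \, \to \, 0
\end{equation*}
by taking $\Gamma$-invariants and exploiting the Haar average, using that $\Gamma$ is a compact group acting strongly continuously by $*$-automorphisms on $\clop$ and on $\maC_0(S^*M;\End(E))$. The identification $\maK^\Gamma = \clopn^\Gamma$ is exactly equation \eqref{eq.compact} intersected with the invariants.

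Exactness at the first two positions is essentially automatic. The inclusion $\clopn^\Gamma \hookrightarrow \clop^\Gamma$ is a restriction of an injection. If $a \in \clop^\Gamma$ satisfies $\sigma_0(a) = 0$, then the non-equivariant sequence gives $a \in \maK$, and $\Gamma$-invariance of $a$ then puts it in $\maK^\Gamma = \clopn^\Gamma$. The subtle point is surjectivity of $\sigma_0$ onto the invariant symbols, and this is where the compactness of $\Gamma$ enters.

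Given $s \in \maC_0(S^*M; \End(E))^\Gamma$, choose any lift $a_0 \in \clop$ with $\sigma_0(a_0) = s$, which exists by surjectivity in the non-equivariant sequence. Set
\begin{equation*}
    a \, := \, \int_\Gamma \gamma \cdot a_0 \, d\gamma \, \in \, \clop^\Gamma \,,
\end{equation*}
the Haar average of $a_0$ under the conjugation action of $\Gamma$. This Bochner integral converges in the operator norm because the action $\gamma \mapsto \gamma \cdot a_0$ is continuous from the compact group $\Gamma$ into the Banach space $\clop$, and it lands in $\clop^\Gamma$ because $\clop$ is a norm-closed, $\Gamma$-invariant subspace. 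Since $\sigma_0 : \clop \to \maC_0(S^*M; \End(E))$ is a continuous $\Gamma$-equivariant $*$-homomorphism, it commutes with the Haar integral, and so
\begin{equation*}
    \sigma_0(a) \seq \int_\Gamma \gamma \cdot \sigma_0(a_0) \, d\gamma \seq \int_\Gamma \gamma \cdot s \, d\gamma \seq s\,,
\end{equation*}
where the last equality uses $\Gamma$-invariance of $s$.

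The main (minor) obstacle is justifying that the norm-convergent Haar average $a$ still lies in $\clop$ and that $\sigma_0$ passes through the integral; both follow from norm-continuity of the $\Gamma$-action on $\clop$ together with the continuity of $\sigma_0$ as a map of $C^*$-algebras, so no hard work is required.
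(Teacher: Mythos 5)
Your proposal is correct and follows essentially the same route as the paper: both start from the non-equivariant principal symbol exact sequence and pass to $\Gamma$-invariants. The only difference is that the paper simply cites the exactness of the functor $\maH \mapsto \maH^\Gamma$ on continuous $\Gamma$-modules for compact $\Gamma$, whereas you unpack the proof of that fact (the Haar-averaging argument for surjectivity), under the same continuity hypothesis on the action that the paper implicitly assumes.
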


\begin{proof}
Recall then
that the principal symbol extends to a continuous, surjective,
$\Gamma$-equivariant map
\begin{equation}
  \sigma_0 \seq \sigma_0^{M} : \clop\, \to\, \maC_0(S^*M; \End(E))
\end{equation} 
with kernel $\clopn = \maK = \maK(L^2(M; E))$. In other words, we have
the following well known exact sequence
\begin{equation*}
   0 \, \to \, \maK \, \to \, \clop\,
   \stackrel{\sigma_{0}}{-\!\!\!\longrightarrow}\,
   \maC_0(S^*M; \End(E)) \, \to \, 0.
\end{equation*}
The exact sequence of the corollary is obtained from the fact that the
functor $\maH \to \maH^{\Gamma}$ is exact on the category of
$\Gamma$--modules with continuous $\Gamma$-action, since $\Gamma$ is
compact.
\end{proof}

\subsection{Reduction to order-zero operators}
We assume here that $M$ is a compact Riemannian manifold with metric
$g$. Let us fix a $\Gamma$-invariant metric on $E \to M$. Let $\nabla$
be a metric preserving, $\Gamma$-invariant connection on $E$. In what
follows $\Delta := \Delta_g^E = \nabla^* \nabla$ will denote the
(positive) Laplacian on $M$ with coefficients in $E$. Since $M$ is
compact, the Sobolev space $H^s(M; E)$ can be defined for any $s \in
\RR$ as the domain of $(1+\Delta)^{s/2}$ and $(1+\Delta)^{s/2} :
H^s(M; E) \to L^2(M; E)$ is an isomorphism.

\begin{lemma}\label{lemma.red.zero}
Assume that $M$ is compact. We have that a bounded operator $P
: H^{s}(M; E) \to H^{s-m}(M; E)$ is Fredholm if, and only if,
$\widetilde P := (1 + \Delta)^{(s-m)/2} P (1 + \Delta)^{-s/2} : L^2(M;
E) \to L^2(M; E)$ is Fredholm. Moreover, if $P$ is the limit in
$\maL(H^{s}(M; E), H^{s-m}(M; E))$ of a sequence of operators $P_n \in
\op{m}$, then $\widetilde P \in \clop$.
\end{lemma}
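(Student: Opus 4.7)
The plan is to treat the two assertions separately. For the equivalence of Fredholmness, I would use that, since $M$ is compact and $\Delta = \nabla^{*}\nabla$ is a positive self-adjoint elliptic operator, the powers $(1+\Delta)^{t/2}$ for $t \in \RR$ are (unbounded) self-adjoint operators defined by functional calculus, and they yield topological isomorphisms
\[
  (1+\Delta)^{s/2} : H^{s}(M;E) \to L^{2}(M;E), \qquad
  (1+\Delta)^{(s-m)/2} : H^{s-m}(M;E) \to L^{2}(M;E),
\]
by the very definition of the Sobolev spaces used in the excerpt. The identity $\widetilde P = (1+\Delta)^{(s-m)/2}\, P\, (1+\Delta)^{-s/2}$ expresses $\widetilde P$ as the conjugation of $P$ by these two isomorphisms, i.e.\ $\widetilde P$ and $P$ fit into a commutative square of bounded operators whose vertical arrows are invertible. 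Fredholmness is stable under composition with invertible bounded operators on either side, so $P$ is Fredholm if and only if $\widetilde P$ is Fredholm.

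For the second assertion I would invoke the classical fact (Seeley) that on a compact manifold the complex powers of a positive elliptic self-adjoint operator are again classical pseudodifferential operators of the appropriate order: thus $(1+\Delta)^{s/2} \in \psi^{s}(M;E)$ and $(1+\Delta)^{(s-m)/2} \in \psi^{s-m}(M;E)$, both $\Gamma$-invariant because $\nabla$ is $\Gamma$-invariant. Consequently $\widetilde{P_{n}} := (1+\Delta)^{(s-m)/2} P_{n} (1+\Delta)^{-s/2}$ lies in $\psi^{(s-m)+m+(-s)}(M;E) = \psi^{0}(M;E)$ by the composition calculus. Now multiplication by $(1+\Delta)^{(s-m)/2}$ on the left is a bounded map $\maL(L^{2},H^{s-m}) \to \maL(L^{2},L^{2})$, while composition with $(1+\Delta)^{-s/2}$ on the right is a bounded map $\maL(H^{s},H^{s-m}) \to \maL(L^{2},H^{s-m})$. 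Combining these, the map $Q \mapsto (1+\Delta)^{(s-m)/2} Q (1+\Delta)^{-s/2}$ is a bounded linear map $\maL(H^{s}(M;E), H^{s-m}(M;E)) \to \maL(L^{2}(M;E))$. Therefore $P_{n} \to P$ in the former norm forces $\widetilde{P_{n}} \to \widetilde{P}$ in the operator norm on $L^{2}(M;E)$, and since each $\widetilde{P_{n}} \in \psi^{0}(M;E)$, the limit $\widetilde P$ lies in its norm closure $\clop$.

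The only nontrivial input is Seeley's theorem that $(1+\Delta)^{t/2}$ is a classical pseudodifferential operator of order $t$; everything else is bookkeeping about bounded operators and the functoriality of Fredholmness under invertible conjugation.
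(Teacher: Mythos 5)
Your proposal is correct and follows essentially the same route as the paper: the first claim via conjugation by the isomorphisms $(1+\Delta)^{t/2}$ between Sobolev spaces and the invariance of Fredholmness under invertible bounded operators, and the second via Seeley's theorem on complex powers together with the continuity of composition in the operator norm. Your write-up merely spells out the details that the paper's two-sentence proof leaves implicit.
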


\begin{proof}
The first part follows from the fact $(1 + \Delta)^{s} : H^{m}(M; E)
\to H^{m-2s}(M; E)$ is an isomorphism for all $m , s \in \RR$, since
$M$ is compact. The second part follows from the well known fact that the powers
of the Laplace operator are classical pseudodifferential operators
\cite{Seeley67} and the
continuity of the multiplication in the operator norm.
\end{proof}

The case of operators in $\psi^m(M; E, F)$ acting between {\em two}
vector bundles $E, F \to M$ can be reduced to the case of a single
vector bundle by considering $\psi^m(M; E \oplus F)$. Moreover, $P
\in \overline{\psi}^0(M; E, F) \subset \overline{\psi}^{0}(M; E
\oplus F)$ will be Fredholm if, and only if,
\begin{equation*}
\left [
\begin{array}{cc}
0 & P \\
P^* & 0 
\end{array}
\right ] \ \in \ \overline{\psi}^{0}(M; E \oplus F)
\end{equation*}
is Fredholm (here all operators act on $L^2$-spaces). Therefore it is
sufficient to consider only the case of order-zero operators acting on
a single vector bundle.

\section{The structure of regularizing operators}
\label{sec.structure.of.regularizing.operators}
We continue to assume that $M$ is a complete Riemannian manifold and
that $\Gamma$ is a compact Lie group acting by isometries on $M$. From
now on, all our vector bundles will be $\Gamma$-equivariant vector
bundles.

As explained in the Introduction, we want to identify the structure of
the restrictions of $\Gamma$-invariant pseudodifferential operators on
$M$ to the isotypical components of $L^2(M; E)$. Let $\pi_\alpha$ be
this restriction morphism to the $\alpha$-isotypical component. More
precisely, we want to understand the structure of the algebra
$\pi_\alpha(\clop^\Gamma)$, for any fixed $\alpha \in \Gamma$. See
Equations \eqref{eq.restriction} and \eqref{eq.restriction2} for the
definition of the restriction morphism $\pi_\alpha$ and of the
projectors $p_\alpha \in C^*(\Gamma)$.

In this section, we study two basic cases: that of inner actions and
that of free actions (of $\Gamma$).

\subsection{Inner actions of $\Gamma$: the abstract case}
In this subsection we deal with the case when the action of $\Gamma$
is implemented by unitaries in the multiplier algebra (the case of
inner actions). This allows us, in particular, to settle the case of
regularizing operators. We shall need the following notion of direct
sum. Recall that $M(A)$ denotes the multiplier algebra of a
$C^*$-algebra $A$. Recall the following standard definition.

\begin{definition} \label{def.inner}
Let $\phi : \Gamma \to \Aut(A)$ be the action of a group $\Gamma$ by
automorphisms on a $C^*$-algebra $A$. We shall say that this action is
{\em inner} if the morphism $\phi$ lifts to a morphism $\psi : \Gamma
\to U(M(A))$ to the group of unitary elements of $M(A)$ such that
\begin{equation*}
  \phi_g(a) \seq \psi(g) a \psi(g)^{-1} \,, \quad a \in A, \ g \in
  \Gamma\,.
\end{equation*}
\end{definition}

\begin{remark}\label{rem.inner}
If $\alpha : \Gamma \to \Aut(A)$ is an inner action as in Definition
\ref{def.inner}, then we obtain, in particular, a morphism
$C^*(\Gamma) \to M(A)$. If, moreover, $\pi : A \to \maL(\maH)$ is a
$*$-representation, then it extends to a representation of
$M(A)$. This induces naturally a unitary representation of $\Gamma$ on
$\maH$. This representation is uniquely determined if $\pi$ is
non-degenerate (i.e. if $\pi(A)\maH$ is dense in $\maH$).
\end{remark}

If $A_n$, $n \ge 1$ is a sequence of $C^*$-algebras, we shall denote
by $c_0 -\oplus_{n=1}^\infty A_n$ the inductive limit $\lim_{N \to
  \infty} \oplus_{n=1}^N A_n$. This definition extends immediately to
countable families of $C^*$-algebras.  Recall that $\widehat{\Gamma}$
denotes the set of isomorphism classes of irreducible unitary
representations of $\Gamma$. We shall need then the following general
result.

\begin{proposition}\label{prop.action}
Let $A$ be a $C^*$-algebra with a *-morphism $C^*(\Gamma) \to M(A)$
and let $p_\alpha \in C^*(\Gamma)$ denote the central projector
corresponding to $\alpha \in \widehat{\Gamma}$. Let $A^\Gamma := \{a
\in A \, \vert \ a f = f a, \ f \in C^*(\Gamma) \}$.  Then
\begin{equation}
   A^\Gamma \, \simeq\, c_0-\oplus_{\alpha \in \hat{\Gamma}} \
   p_\alpha A^\Gamma \,.
\end{equation}
If $I \subset A$ is a closed two-sided ideal, then $C^*(\Gamma)I
\subset I$ and hence we obtain *-morphisms $C^*(\Gamma) \to M(I)$ and
$C^*(\Gamma) \to M(A/I)$ such that the induced sequence
\begin{equation*}
  0 \to p_\alpha I^\Gamma \to p_\alpha A^\Gamma \to p_\alpha
  (A/I)^\Gamma \to 0
\end{equation*}
is exact for any $\alpha \in \widehat{\Gamma}$. 
\end{proposition}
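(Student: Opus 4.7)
The plan is to handle the two assertions in sequence, using that the $p_\alpha$ form an orthogonal family of central projections whose finite sums constitute an approximate unit of $C^*(\Gamma)$.

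For the decomposition $A^\Gamma \simeq c_0\text{-}\oplus_\alpha p_\alpha A^\Gamma$, I would first note that, since $A^\Gamma$ is the commutant of the image of $C^*(\Gamma)$ in $M(A)$, multiplication by $p_\alpha$ sends $A^\Gamma$ into itself (using $p_\alpha a = a p_\alpha$) and the subspaces $p_\alpha A^\Gamma$ are two-sided ideals of $A^\Gamma$ which are mutually orthogonal because $p_\alpha p_\beta = 0$ for $\alpha \neq \beta$. The map $a \mapsto (p_\alpha a)_\alpha$ then gives a well-defined $*$-homomorphism into $\prod_\alpha p_\alpha A^\Gamma$. Injectivity and $c_0$-decay follow from $a = \lim_F \bigl(\sum_{\alpha \in F} p_\alpha\bigr)a$; surjectivity onto the $c_0$-direct sum comes from the fact that, for a family $(a_\alpha)$ with pairwise orthogonal terms and $\|a_\alpha\| \to 0$, the series $\sum_\alpha a_\alpha$ converges in norm (orthogonality gives $\|\sum_{\alpha \in F} a_\alpha\| = \sup_{\alpha \in F} \|a_\alpha\|$) to an element whose $p_\alpha$-component is $a_\alpha$.

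For the second part, I would first record the standard fact that any closed two-sided ideal $I \subset A$ is automatically preserved by multipliers: given $f \in C^*(\Gamma) \subset M(A)$ and $a \in I$, picking an approximate unit $(e_\lambda)$ of $I$ yields $fa = \lim (fe_\lambda)a$ with $fe_\lambda \in A$ and $(fe_\lambda)a \in A\cdot I \subset I$, so $fa \in I$. This gives the stated induced morphisms $C^*(\Gamma) \to M(I)$ and $C^*(\Gamma) \to M(A/I)$, and the first part of the proposition applies equally to $I$ and $A/I$.

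Turning to exactness of the induced sequence, the injectivity of $p_\alpha I^\Gamma \hookrightarrow p_\alpha A^\Gamma$ is clear, and the identification of the kernel reduces to showing $p_\alpha A^\Gamma \cap I = p_\alpha I^\Gamma$: if $p_\alpha a \in I$ for $a \in A^\Gamma$, then $p_\alpha a \in I \cap A^\Gamma = I^\Gamma$, hence $p_\alpha a = p_\alpha(p_\alpha a) \in p_\alpha I^\Gamma$. The hard step will be surjectivity of $p_\alpha A^\Gamma \to p_\alpha (A/I)^\Gamma$, and I expect this to be the main obstacle. My plan is to exploit the inner structure of the action (Definition \ref{def.inner}), which lifts the morphism to unitaries $\psi(g) \in M(A)$ implementing automorphisms $\phi_g = \operatorname{Ad}(\psi(g))$ with $A^\Gamma$ equal to the fixed-point algebra of $\phi$. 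Given $c = p_\alpha b$ with $b \in (A/I)^\Gamma$, I would lift $b$ arbitrarily to $\tilde b \in A$, then average $\hat b := \int_\Gamma \phi_g(\tilde b)\, dg \in A^\Gamma$. By invariance of $b$, the image of $\hat b$ in $A/I$ equals $\int_\Gamma \phi_g(b)\, dg = b$, so $p_\alpha \hat b \in p_\alpha A^\Gamma$ projects onto $c$, completing the argument.
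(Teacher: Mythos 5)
Your proof is correct, and for the direct-sum decomposition it coincides with the paper's argument (orthogonal central projections whose finite sums form an approximate unit, plus $p_\alpha a = a p_\alpha$ for $a \in A^\Gamma$). Where you genuinely add something is in the second half: the paper's own proof consists of the single remark that $M(A)I \subset I$ and leaves the exactness of $0 \to p_\alpha I^\Gamma \to p_\alpha A^\Gamma \to p_\alpha (A/I)^\Gamma \to 0$ entirely implicit, whereas you prove $M(A)I\subset I$ via an approximate unit of $I$, identify the middle kernel as $p_\alpha A^\Gamma \cap I = p_\alpha I^\Gamma$, and establish surjectivity by lifting and averaging $\hat b = \int_\Gamma \phi_g(\tilde b)\,dg$ over the inner action. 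This is the right (and presumably intended) argument; the only caveat is that it uses unitaries $\psi(g)\in M(A)$ implementing the action, which are not literally part of the hypotheses --- the proposition only posits a $*$-morphism $C^*(\Gamma)\to M(A)$. You should note that non-degeneracy of this morphism must be assumed (otherwise even the first claim fails, e.g.\ for the zero morphism, and the paper's own step $\lim_N \sum_n p_{\rho_n}a = a$ already presupposes it), and that non-degeneracy yields the unitaries by extending the morphism to $M(C^*(\Gamma)) \supset \Gamma$, exactly as in Remark \ref{rem.inner}; with that made explicit, $A^\Gamma$ is the fixed-point algebra of the inner action $\phi_g = \operatorname{Ad}(\psi(g))$, the quotient map intertwines the actions on $A$ and $A/I$ because $\phi_g(I)=I$, and your averaging step goes through. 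In short: same approach where the paper gives one, and a complete, correct filling-in of the part the paper omits.
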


\begin{proof}
Let us arrange the elements of $\widehat{\Gamma}$ in a sequence
$\rho_n$, $n \ge 1$. Then, for any $a \in A$, $\lim_{N \to \infty}
\sum_{n=1}^N p_{\rho_n} a = a$. Since, for any $a \in A^{\Gamma}$, we
have $p_\alpha a = a p_\alpha$, the isomorphism $ A^\Gamma \simeq
c_0-\oplus_{\alpha \in \hat{\Gamma}}\, A^\Gamma p_\alpha$ follows.
Finally, it is known that if $I$ is a two-sided ideal of $A$, then
$M(A)I \subset I$.
\end{proof}

If $A \subset \maL(\maH)$ is a sub-$C^*$-algebra of the algebra of
bounded operators on a Hilbert space $\maH$ together with a compatible
$\Gamma$-module structure on $\maH$, we let $\pi_\alpha : A^\Gamma \to
\maL(\maH_\alpha)$ be the restriction morphism to the
$\alpha$-isotypical component, as before, see Equations
\eqref{eq.restriction} and \eqref{eq.restriction2}.

\begin{proposition}\label{prop.action2}
In addition to the assumptions of Proposition \ref{prop.action}, let us suppose
that $A \subset \maL(\maH)$, for some Hilbert space $\maH$. Let $\alpha \in
\widehat{\Gamma}$, as before. Then the morphism $\pi_\alpha : A^\Gamma \to
\maL(\maH_\alpha)$ restricts to an isomorphism $p_\alpha A^\Gamma \to
\pi_\alpha(A^\Gamma)$.
\end{proposition}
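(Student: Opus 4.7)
My plan is to identify the multiplier action of $p_\alpha$ on $A$ with the orthogonal projection of $\maH$ onto $\maH_\alpha$, and then use the centrality of $p_\alpha$ in $C^*(\Gamma)$ together with the commutativity of $a \in A^\Gamma$ with all of $C^*(\Gamma)$.

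\textbf{Step 1: Identify $p_\alpha$ on $\maH$.} The compatibility assumption means that the representation $A \to \maL(\maH)$, extended to $M(A) \to \maL(\maH)$ and composed with $C^*(\Gamma) \to M(A)$, gives the $\Gamma$-action on $\maH$ (cf.\ Remark \ref{rem.inner}). Hence the operator corresponding to $p_\alpha \in C^*(\Gamma)$ is precisely the orthogonal projection of $\maH$ onto the isotypical component $\maH_\alpha$. In particular, for $\beta \neq \alpha$, $p_\alpha$ acts as $0$ on $\maH_\beta$.

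\textbf{Step 2: Surjectivity onto $\pi_\alpha(A^\Gamma)$.} For any $a \in A^\Gamma$, the defining condition of $A^\Gamma$ gives $p_\alpha a = a p_\alpha$ in $A$, and this identity also holds when viewed on $\maH$. Restricting to $\maH_\alpha = p_\alpha \maH$, this yields
\begin{equation*}
    \pi_\alpha(a) \seq a\vert_{\maH_\alpha} \seq (p_\alpha a p_\alpha)\vert_{\maH_\alpha} \seq (p_\alpha a)\vert_{\maH_\alpha} \seq \pi_\alpha(p_\alpha a)\,.
\end{equation*}
Since $p_\alpha a \in p_\alpha A^\Gamma$, this shows $\pi_\alpha(A^\Gamma) = \pi_\alpha(p_\alpha A^\Gamma)$.

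\textbf{Step 3: Injectivity on $p_\alpha A^\Gamma$.} Suppose $b = p_\alpha a \in p_\alpha A^\Gamma$ satisfies $\pi_\alpha(b) = 0$, i.e.\ $b\vert_{\maH_\alpha} = 0$. Because $a$ commutes with $p_\alpha$ and $p_\alpha^2 = p_\alpha$, we have $b = p_\alpha a = p_\alpha a p_\alpha$. On any $\maH_\beta$ with $\beta \neq \alpha$, the rightmost factor $p_\alpha$ already annihilates, so $b$ acts as zero. On $\maH_\alpha$, we have $b\vert_{\maH_\alpha} = 0$ by assumption. Since $\maH = \bigoplus_\beta \maH_\beta$ (the decomposition into isotypical components, using that $\Gamma$ is compact), $b = 0$ as an operator in $\maL(\maH)$, and therefore $b = 0$ in $A$ (since $A \hookrightarrow \maL(\maH)$).

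The main (minor) subtlety is Step 1: one must confirm that the $*$-morphism $C^*(\Gamma) \to M(A)$ recovers the given unitary $\Gamma$-action on $\maH$, so that $p_\alpha$ really acts as the projection onto $\maH_\alpha$. This is precisely the compatibility hypothesis (and is the content of Remark \ref{rem.inner} when the representation $A \to \maL(\maH)$ is non-degenerate). Once this identification is in place, the rest is a short algebraic manipulation using the centrality of $p_\alpha$.
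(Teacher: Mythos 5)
Your proof is correct and follows essentially the same route as the paper's: both arguments rest on identifying $p_\alpha$ (via the morphism $C^*(\Gamma) \to M(A) \to \maL(\maH)$ of Remark \ref{rem.inner}) with the orthogonal projection onto $\maH_\alpha$, and on the observation that an element of $p_\alpha A^\Gamma$ equals $p_\alpha a p_\alpha$ and is therefore supported on $\maH_\alpha$. The paper's version is terser --- it simply records $\pi_\alpha(p_\beta)=0$ for $\beta\neq\alpha$, $\pi_\alpha(p_\alpha)=p_\alpha$, and invokes the decomposition of Proposition \ref{prop.action} --- but the content is the same as your Steps 1--3.
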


\begin{proof} 
Let us recall that, as explained in Remark \ref{rem.inner}, the
  group $\Gamma$ will be represented on $\maH$.
  The rest follows from Proposition \ref{prop.action}, whose notation
we shall use freely. Indeed, if $\alpha \neq \beta \in \widehat
\Gamma$, then $\pi_\alpha (p_\beta) = 0$. On the other hand
$\pi_\alpha(p_\alpha) = p_\alpha$. The result follows by combining
this property with $\maH_\alpha = p_\alpha \maH$ and with Proposition
\ref{prop.action}.
\end{proof}

\subsection{Inner actions of $\Gamma$: pseudodifferential operators}
We now apply the results of the previous subsection to the
  algebra of pseudodifferential operators. In particular, this gives a
  rather complete picture for the case of negative order operators
(recall that the closure of regularizing operators coincides with that
of negative order operators). Since we are eventually interested only
in the case $\Gamma$ finite, we discuss only briefly the issues
related to the non-discrete case (such as the continuity of the action
of $\Gamma$).

A crucial first observation is that if $\gamma \in \Gamma$ and $P \in
\op {-\infty}$, then $\gamma P, P \gamma \in \op
    {-\infty}$. This leads to several interesting consequences. We
    record this as a lemma.

\begin{lemma} \label{lemma.multiplier}
We have $\gamma \op {-\infty} = \op {-\infty} \gamma = \op {-\infty}$,
for all $\gamma \in \Gamma$, and the induced actions (to the right and
to the left) of $\Gamma$ on $\op {-\infty}$ are continuous and unitary
in the operator norm. Consequently, we have
\begin{equation*}
   C^*(\Gamma) \clopn \, + \, \clopn C^*(\Gamma) \ \subset \ \clopn \,
   .
\end{equation*}
\end{lemma}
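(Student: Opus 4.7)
The plan is to combine a direct Schwartz-kernel calculation for the first two assertions with a standard functional-analytic fact about strongly continuous unitary representations on compact operators, and then to integrate to deduce the $C^*$-statement.

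First, I would establish $\gamma \op{-\infty} = \op{-\infty} \gamma = \op{-\infty}$ by direct computation of Schwartz kernels. The element $\gamma$ acts on $L^2(M;E)$ by the unitary $U_\gamma$, $(U_\gamma f)(x) := \gamma_E \cdot f(\gamma^{-1}x)$, where $\gamma_E\colon E_{\gamma^{-1}x} \to E_x$ is the bundle isomorphism covering $\gamma$. A regularizing operator $P \in \op{-\infty}$ has a smooth, compactly supported Schwartz kernel $k_P$. Direct computation shows that $U_\gamma P$ has kernel $(x,y) \mapsto \gamma_E \cdot k_P(\gamma^{-1}x, y)$ and $P U_\gamma$ has kernel $(x,y) \mapsto k_P(x, \gamma y)\, \gamma_E\, |J_\gamma(y)|$. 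Since $\gamma$ is a diffeomorphism of $M$ that lifts to an equivariant bundle map, both kernels remain smooth and compactly supported, so $\gamma P, P\gamma \in \op{-\infty}$.

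For continuity and unitarity in the operator norm: unitarity follows from $\|U_\gamma P\| = \|P U_\gamma\| = \|P\|$, since $U_\gamma$ is unitary on $L^2(M;E)$. For norm-continuity, the representation $U$ is strongly continuous (by smoothness of the $\Gamma$-action on $M$ and $\Gamma$-equivariance of $E$), and every $P \in \op{-\infty}$ is trace-class---hence compact---via its smooth, compactly supported kernel. The standard lemma that a strongly continuous unitary representation acts norm-continuously on compact operators, obtained by approximating a compact operator by finite-rank operators and applying strong continuity on their finite-dimensional ranges, then yields norm-continuity of $\gamma \mapsto U_\gamma P$ and $\gamma \mapsto P U_\gamma$.

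For the consequence: given $f \in \maC(\Gamma)$ and $P \in \op{-\infty}$, the integrand $\gamma \mapsto f(\gamma) U_\gamma P$ is norm-continuous on the compact set $\Gamma$, so its Bochner integral $\int_\Gamma f(\gamma) U_\gamma P\, d\gamma$ exists in operator norm; it is approximated in norm by Riemann sums which lie in $\op{-\infty}$, hence the integral lies in $\clopn$. Density of $\maC(\Gamma)$ in $C^*(\Gamma)$, of $\op{-\infty}$ in $\clopn$, and norm-continuity of multiplication (using also that $\clopn = \maK(L^2(M;E))$ from \eqref{eq.compact} is a two-sided ideal in $\maL(L^2(M;E))$) extend this to $C^*(\Gamma)\clopn \subset \clopn$; the right inclusion is symmetric. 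The only substantive point in the whole argument is the operator-norm continuity, and even there the content is the standard functional-analytic lemma just invoked.
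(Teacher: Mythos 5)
Your proof is correct and follows essentially the same route as the paper's (much terser) argument: the kernel transformation under $U_\gamma$ shows the class $\psi^{-\infty}$ is preserved, unitarity of $U_\gamma$ (coming from the invariant metric) gives the norm identities, and the $C^*(\Gamma)$-inclusion is deduced by integration. The one point you spell out that the paper leaves implicit is the operator-norm continuity of $\gamma\mapsto U_\gamma P$, which you correctly reduce to the standard fact that strongly continuous unitary representations act norm-continuously on compact operators.
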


\begin{proof}
Since smoothing operators have smooth kernels $k(x,y) \in
\mathrm{Hom}(E_y,E_x)$, the action of $\Gamma$ induced an action on
$\op {-\infty}$. Thanks to the $\Gamma$-invariance of the metric on
$M$, we have $\|\gamma P\| = \|P\gamma\| = \|P\|$ in the
$L^2$-operator norm, for any $P \in \op{-\infty}$. The second part
follows from the first part.
\end{proof}

We then have the following:

\begin{proposition} \label{prop.multiplier}
The multiplication by $\Gamma$ on $\clopn$ defines a *-morphism
$C^*(\Gamma) \to U(M(\clopn))$ to the multiplier algebra of
$\clopn$.
\end{proposition}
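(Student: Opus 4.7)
The plan is to use the double-centralizer description of the multiplier algebra to first build a strictly continuous unitary representation $\rho : \Gamma \to U(M(\clopn))$ from left and right multiplication, and then to extend this representation to a $*$-morphism on $C^*(\Gamma)$ by integration against the Haar measure. All the essential analytic content is already contained in Lemma \ref{lemma.multiplier}, so the work reduces to checking that these pieces fit together correctly.

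First I would define, for each $\gamma \in \Gamma$, the pair $u_\gamma = (L_\gamma, R_\gamma)$ with $L_\gamma(P) \ede \gamma P$ and $R_\gamma(P) \ede P \gamma$; by Lemma \ref{lemma.multiplier} these are well-defined bounded linear maps on $\clopn$. The double-centralizer compatibility $a L_\gamma(b) = a \gamma b = R_\gamma(a) b$ is immediate, so $u_\gamma \in M(\clopn)$. The $\Gamma$-invariance of the Riemannian metric makes $\gamma$ act by unitaries on $L^2(M; E)$, hence $L_\gamma$ and $R_\gamma$ are isometric and satisfy $u_\gamma u_{\gamma^{-1}} = u_{\gamma^{-1}} u_\gamma = 1$, placing $u_\gamma$ in $U(M(\clopn))$. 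Group multiplicativity $u_{\gamma \delta} = u_\gamma u_\delta$ and $u_\gamma^* = u_{\gamma^{-1}}$ follow directly from the associativity and unitarity of the $\Gamma$-action on $L^2(M;E)$.

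Next, strict continuity of $\gamma \mapsto u_\gamma$ is precisely the statement that $\gamma \mapsto \gamma P$ and $\gamma \mapsto P \gamma$ are norm-continuous for each fixed $P \in \clopn$, which is exactly the continuity clause in Lemma \ref{lemma.multiplier}. Once this is in hand, I would integrate against the Haar measure: for $f \in \maC(\Gamma)$, set
\begin{equation*}
  \pi(f) \cdot P \ede \int_\Gamma f(\gamma)\, (\gamma P) \, d\gamma\,, \qquad P \cdot \pi(f) \ede \int_\Gamma f(\gamma)\, (P \gamma) \, d\gamma\,,
\end{equation*}
as Bochner integrals in the operator norm. Strict continuity ensures these converge and that $\pi(f)$ is again a two-sided multiplier of $\clopn$; unitarity of $\rho$ combined with the amenability of the compact group $\Gamma$ (so that $C^*(\Gamma) = C_r^*(\Gamma)$) gives the $C^*$-bound $\|\pi(f)\|_{M(\clopn)} \leq \|f\|_{C^*(\Gamma)}$, which allows a unique extension of $\pi$ to the desired $*$-morphism $C^*(\Gamma) \to M(\clopn)$.

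There is no serious obstacle here; the work is essentially bookkeeping around the multiplier algebra construction, since all the nontrivial estimates were absorbed into Lemma \ref{lemma.multiplier}. The step I would be most careful about is verifying that the extension really is a $*$-morphism (not just linear and multiplicative), i.e.\ that $\pi(f^*) = \pi(f)^*$, where $f^*(\gamma) = \overline{f(\gamma^{-1})}$; this follows from $u_\gamma^* = u_{\gamma^{-1}}$ and a change of variables in the defining integral, but is the only place where one must keep the involutions on $C^*(\Gamma)$ and on $M(\clopn)$ straight.
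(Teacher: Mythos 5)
Your proposal is correct and follows the same route as the paper, whose entire proof is the single line ``This follows from Lemma \ref{lemma.multiplier}''; you have simply written out the standard double-centralizer and integration details that the authors left implicit. The only nuance worth noting is that the bound $\|\pi(f)\|_{M(\clopn)} \leq \|f\|_{C^*(\Gamma)}$ comes most directly from the universal property of the full group $C^*$-algebra applied to the strictly continuous unitary representation $\gamma \mapsto u_\gamma$, with amenability only needed to identify $C^*(\Gamma)$ with $C^*_r(\Gamma)$ as in the paper's conventions.
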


\begin{proof}
  This follows from Lemma \ref{lemma.multiplier}.
\end{proof}

We obtain the following corollary.

\begin{corollary}  \label{cor.exact}
Let $A = \clopn$. If $\Gamma$ acts trivially on $M$ (so that the
action of $C^*(\Gamma)$ extends to the algebra $\clop$), we also allow
$A = \clop$ or $A = \maC_0(S^*M; \End(E))$. We then have isomorphisms
\begin{equation*}
	A^\Gamma \ \simeq \ c_0-\oplus_{\alpha \in
          \widehat{\Gamma}}\ p_\alpha A^\Gamma\, .
\end{equation*}
Moreover, if $\Gamma$ acts trivially on $M$ and $\alpha \in \hat
\Gamma$, we have an exact sequence
\begin{equation*}
   0 \to p_\alpha \clopn^\Gamma \to p_\alpha \clop^\Gamma \to
   \maC_0(S^*M; \End(p_\alpha E)^\Gamma) \to 0 \, .
\end{equation*}
\end{corollary}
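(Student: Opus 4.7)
The plan is to deduce both assertions from the general $C^*$-algebraic results in Propositions \ref{prop.action} and \ref{prop.multiplier}, combined with the symbol exact sequence from Corollary \ref{cor.full.symbol}. Once one has a $*$-morphism $C^*(\Gamma) \to M(A)$ in each of the three cases, Proposition \ref{prop.action} immediately produces both the $c_0$-direct sum decomposition and the exactness of the restricted sequence on each isotypical component; the only remaining task is to identify the symbol piece appearing on the right.

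For $A = \clopn$, the desired multiplier morphism is furnished directly by Proposition \ref{prop.multiplier}. For $A = \clop$ and $A = \maC_0(S^*M; \End(E))$ under the hypothesis that $\Gamma$ acts trivially on $M$, each $\gamma \in \Gamma$ is implemented by a smooth, bounded, fibrewise bundle automorphism of $E$, hence by a zeroth-order multiplication operator; it therefore multiplies each of the two algebras into itself in norm, and integration against the Haar measure yields the required morphism $C^*(\Gamma) \to M(A)$. Proposition \ref{prop.action} then gives $A^\Gamma \simeq c_0-\oplus_{\alpha \in \widehat{\Gamma}} \, p_\alpha A^\Gamma$ in all three cases.

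For the exact sequence, I take as input the $\Gamma$-equivariant symbol sequence of Corollary \ref{cor.full.symbol}, in which $\clopn^\Gamma$ is a closed two-sided ideal of $\clop^\Gamma$ preserved by $C^*(\Gamma)$. Applying the second part of Proposition \ref{prop.action} to the central projector $p_\alpha$ yields the exactness of
\begin{equation*}
   0 \to p_\alpha \clopn^\Gamma \to p_\alpha \clop^\Gamma \to p_\alpha \maC_0(S^*M; \End(E))^\Gamma \to 0\,.
\end{equation*}
The step I expect to require most care is the identification of the rightmost term with $\maC_0(S^*M; \End(p_\alpha E)^\Gamma)$. Since $\Gamma$ acts trivially on $M$, the bundle $E$ splits globally into isotypical subbundles $E \simeq \bigoplus_{\rho \in \widehat{\Gamma}} p_\rho E$, and Schur's lemma gives the fibrewise decomposition $\End(E)^\Gamma \simeq \bigoplus_{\rho} \End(p_\rho E)^\Gamma$, which passes to sections on $S^*M$. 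Multiplication by the central character $p_\alpha$ selects the $\alpha$-summand, yielding the required identification and completing the proof.
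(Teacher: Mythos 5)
Your proposal is correct and follows essentially the same route as the paper: the direct-sum decomposition comes from Proposition \ref{prop.action} (with the multiplier morphism supplied by Proposition \ref{prop.multiplier} for $\clopn$ and by the fibrewise action for the other two algebras), and the exact sequence comes from exactness of taking invariants and applying $p_\alpha$ to the symbol sequence. Your explicit identification of $p_\alpha\,\maC_0(S^*M;\End(E))^\Gamma$ with $\maC_0(S^*M;\End(p_\alpha E)^\Gamma)$ via the isotypical splitting of $E$ is a detail the paper leaves implicit, and it is carried out correctly.
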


\begin{proof}
The first part follows from Proposition \ref{prop.action} applied to
the algebra $\clop$ and to its ideal $\maK$. The second part
follows from the exactness of the functors $V \to V^\Gamma$ and $V \to
p_\alpha V$ on the category of $\Gamma$-modules.
\end{proof}

Let $\alpha \in \hat \Gamma$ and let $\pi_\alpha$ be the
representation of $\clop^\Gamma$ on $L^2(M; E)_{\alpha}$ defined by
restriction as before, Equations \eqref{eq.restriction} and
\eqref{eq.restriction2}.  The assumptions of Proposition
  \ref{prop.action2} are satisfied for $A = \clopn$, so we obtain the
  following.

\begin{corollary} \label{cor.isomorphism} The morphism $\pi_\alpha$
restricts to an isomorphism from $p_\alpha \clopn^\Gamma$ to
$\pi_\alpha(\clopn^\Gamma)$.
\end{corollary}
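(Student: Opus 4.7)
The plan is to verify that the hypotheses of Proposition \ref{prop.action2} are met for $A = \clopn$ acting on the Hilbert space $\maH = L^2(M;E)$, and then to invoke that proposition directly. Since $\clopn$ coincides with $\maK(L^2(M;E))$ by Equation \eqref{eq.compact}, it is indeed a $C^*$-subalgebra of $\maL(\maH)$. By Proposition \ref{prop.multiplier}, the left multiplication by $\Gamma$ extends to a $*$-morphism $C^*(\Gamma) \to U(M(\clopn))$, so the required $*$-morphism $C^*(\Gamma) \to M(A)$ exists. Granted this, Proposition \ref{prop.action2} immediately yields the isomorphism $p_\alpha \clopn^\Gamma \xrightarrow{\ \sim\ } \pi_\alpha(\clopn^\Gamma)$.

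The one point that deserves care is to confirm that the $\Gamma$-module structure on $\maH = L^2(M;E)$ obtained by unitary translations from the $\Gamma$-equivariant bundle structure on $E \to M$ coincides with the unitary representation of $\Gamma$ on $\maH$ induced by the composition $C^*(\Gamma) \to M(\maK(\maH)) \to \maL(\maH)$ of Remark \ref{rem.inner}. This is where I would spend most of the verification: since the inclusion $\maK(\maH) \hookrightarrow \maL(\maH)$ is a non-degenerate representation, Remark \ref{rem.inner} guarantees that the induced unitary representation of $\Gamma$ on $\maH$ is uniquely determined by the formula $\psi(g) a \psi(g)^{-1} = g \cdot a$ for $a \in \clopn$ and $g \in \Gamma$. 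The natural action of $\Gamma$ on $L^2(M;E)$ (via the $\Gamma$-equivariant bundle $E$ and the $\Gamma$-invariant metric on $M$) implements the same automorphism of $\clopn$ by Lemma \ref{lemma.multiplier}, so by uniqueness the two representations agree. Consequently $p_\alpha \maH = \maH_\alpha$ in the sense of \ref{ssec.isotypical_component}, and the identification $\pi_\alpha$ in Equation \eqref{eq.restriction2} coincides with the morphism appearing in Proposition \ref{prop.action2}.

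The main (and only non-routine) obstacle, therefore, is this compatibility of actions; once it is settled, the conclusion is a pure application of the abstract machinery of Proposition \ref{prop.action2}. I would not expect any analytic difficulty here, since everything reduces to the fact that $\Gamma$ acts by unitaries on $L^2(M;E)$ whose conjugation action on compact operators is the standard one.
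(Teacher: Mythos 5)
Your proposal is correct and takes essentially the same route as the paper, which simply observes that the hypotheses of Proposition \ref{prop.action2} are satisfied for $A = \clopn$ (via Proposition \ref{prop.multiplier} and Equation \eqref{eq.compact}) and then cites that proposition. Your additional verification that the unitary representation of $\Gamma$ on $L^2(M;E)$ induced via Remark \ref{rem.inner} agrees with the geometric one is a reasonable point of care that the paper leaves implicit, but it does not change the argument.
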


We also have the following simple result, which makes the last
corollary more precise. Recall that $\maK \simeq \clopn$. This allows
us to better describe the structure of $\clopn^\Gamma$.

\begin{proposition} \label{prop.image} The algebra
$\pi_\alpha(\maK^\Gamma)$ is the algebra of $\Gamma$--equivariant
  compact operators on $L^2(M; E)_\alpha$.
\end{proposition}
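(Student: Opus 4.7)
The claim is a direct identification: $\pi_\alpha(\maK^\Gamma)$, \emph{a priori} defined as the image of the restriction morphism on the $\alpha$-isotypical component, coincides with \emph{all} $\Gamma$-equivariant compact operators on $L^2(M;E)_\alpha$. The two inclusions are quite different in flavor, so I would handle them separately.

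For the easy inclusion, I would observe that if $K \in \maK^\Gamma = \clopn^\Gamma$, then $K$ is compact on $L^2(M;E)$ and commutes with the $\Gamma$-action. Since $L^2(M;E)_\alpha = p_\alpha L^2(M;E)$ is a closed $\Gamma$-invariant subspace (because $p_\alpha$ is central and self-adjoint, being attached to the character $\chi_\alpha$), the operator $K$ preserves $L^2(M;E)_\alpha$, and the restriction $\pi_\alpha(K) = K|_{L^2(M;E)_\alpha}$ is again compact and commutes with the induced $\Gamma$-action on $L^2(M;E)_\alpha$. Thus $\pi_\alpha(\maK^\Gamma) \subset \maK(L^2(M;E)_\alpha)^\Gamma$.

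For the reverse inclusion, given a $\Gamma$-equivariant compact operator $T : L^2(M;E)_\alpha \to L^2(M;E)_\alpha$, I would construct an explicit lift by extension by zero. Let $i : L^2(M;E)_\alpha \hookrightarrow L^2(M;E)$ denote the inclusion and define
\begin{equation*}
  \widetilde T \ede i \circ T \circ p_\alpha \,:\, L^2(M;E) \,\to\, L^2(M;E)\,.
\end{equation*}
Then $\widetilde T$ is compact, being the composition of a compact operator with bounded operators, and it is $\Gamma$-equivariant because $p_\alpha$ is central in $C^*(\Gamma)$ and hence $\Gamma$-equivariant, while $T$ is $\Gamma$-equivariant by assumption and $i$ is obviously so. Using the identification $\clopn = \maK(L^2(M;E))$ from Equation \eqref{eq.compact}, this gives $\widetilde T \in \maK^\Gamma$, and clearly $\pi_\alpha(\widetilde T) = T$, establishing the surjectivity.

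I do not anticipate a real obstacle here: everything rests on the centrality of $p_\alpha$ (so that it is $\Gamma$-invariant and self-adjoint), on the fact that compactness is preserved by composition with bounded maps, and on the fact that $\maK^\Gamma = \clopn^\Gamma$ identifies with all $\Gamma$-invariant compact operators on $L^2(M;E)$. As a consistency check, one can compare with Corollary \ref{cor.isomorphism}: combining the isomorphism $p_\alpha \clopn^\Gamma \simeq \pi_\alpha(\clopn^\Gamma)$ with the present proposition recovers the expected description of $p_\alpha \maK^\Gamma$ as $\maK(L^2(M;E)_\alpha)^\Gamma$.
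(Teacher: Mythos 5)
Your proof is correct and follows essentially the same route as the paper: restriction to the invariant subspace for one inclusion, and extension by zero (which the paper phrases as $K_\alpha \subset \maK^\Gamma$ with $\pi_\alpha$ acting as the identity on $K_\alpha$) for the other. Your version merely makes the lift $\widetilde T = i \circ T \circ p_\alpha$ explicit.
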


\begin{proof}
Let $T \in \maK$ commute with $\Gamma$.  Then its restriction to a
$\Gamma$-invariant subspace is still compact and still commutes with
$\Gamma$. This shows that $\pi_\alpha(\maK^\Gamma)$ is contained in
the set $K_\alpha$ of $\Gamma$-invariant compact operators on
$L^2(M; E)_\alpha$. Conversely, $K_\alpha \subset \maK^\Gamma$ and
$\pi_\alpha$ acts as the identity on $K_\alpha$.
\end{proof}

\subsection{The case of free actions}
Let us now tackle the opposite case, that is when $\Gamma$ acts freely on
$M$. We shall assume that $\Gamma$ is finite, for simplicity (we only
need this case), and hence, in particular, the action of $\Gamma$ is
proper. We have then the following well-known result (see
\cite{ConnesBook, documenta} and the references therein).

\begin{proposition}\label{prop.free}
Let us assume that $\Gamma$ is a finite group acting freely on $M$ and
let $\alpha(\gamma) = 1$ be the trivial representation, so $\pi_\alpha
= \pi_1$. Let us denote by $F:=E/\Gamma \to M/\Gamma$ the resulting
vector bundle and $\phi : \maC_0(S^*M; \End(E))^{\Gamma} \to
\maC_0(S^*M/\Gamma; \End( E/\Gamma))$ the resulting isomorphism. Then
we have the following morphism of exact sequences, with the vertical
arrows surjective.
\begin{equation*}
\xymatrix{0 \ar[r]& \clopn^\Gamma \ar[r]\ar[d]_{\pi_1} & \clop^\Gamma
  \ar[r] \ar[d]^{\pi_1} & \maC_0(S^*M; \End( E))^\Gamma \ar[r]
  \ar[d]^{\phi} &0 \\
  0 \ar[r]& \overline{\psi^{-1}}(M/\Gamma, F) \ar[r]&
  \overline{\psi^{-0}}(M/\Gamma, F) \ar[r] &
  \maC_0(S^*M/\Gamma; \End(F)) \ar[r]&0 }
\end{equation*}
\end{proposition}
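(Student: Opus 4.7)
\medskip

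\noindent\textbf{Proof plan.}
The starting point is the identification, under the free action hypothesis, of the trivial isotypical component with the Hilbert space on the quotient. Since $\Gamma$ is finite and acts freely on $M$, the projection $\pi \colon M \to M/\Gamma$ is a covering map, $F = E/\Gamma$ is a well-defined $\Gamma$-equivariant vector bundle, and pullback gives a natural isomorphism $\pi^* \colon \maC^\infty(M/\Gamma; F) \to \maC^\infty(M;E)^\Gamma$. Extending this to $L^2$ with the proper normalization produces a unitary $V \colon L^2(M/\Gamma; F) \to L^2(M;E)_1$ (using that $L^2(M;E)_1 = L^2(M;E)^\Gamma$ since $\alpha$ is the trivial representation). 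I then want to identify $\pi_1(P)$ with $V^{*} P V$ for each $P \in \clop^\Gamma$, which is how both well-definedness and the surjectivity statement will be formulated.

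Next, I would verify that the vertical arrows in the diagram are well-defined and that the diagram commutes. For a point $\bar x \in M/\Gamma$, choose an evenly covered neighborhood $V_0$ with $\pi^{-1}(V_0) = \bigsqcup_{\gamma \in \Gamma} \gamma U_0$, where $\pi|_{U_0}\colon U_0 \to V_0$ is a diffeomorphism. A $\Gamma$-invariant section on $\pi^{-1}(V_0)$ is determined by its restriction to $U_0$, so $V^* P V$ restricted to $C_c^\infty(V_0; F)$ coincides, up to the local diffeomorphism $\pi|_{U_0}$, with $P$ acting on $C_c^\infty(U_0; E)$ (any contributions coming from kernel support in $\gamma U_0 \times U_0$ with $\gamma \neq 1$ combine into a smoothing term because the diagonals $\Delta_\gamma = \{(x,\gamma x)\}$ are disjoint from $\Delta_M$ for $\gamma \neq 1$). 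Thus $V^{*} P V$ is a classical pseudodifferential operator of the same order, whose principal symbol is obtained from $\sigma_m(P)$ by descent under $\pi$, which is precisely the isomorphism $\phi$ on symbols. This gives commutativity of the right square and shows that $\pi_1$ is well-defined on both $\clop^\Gamma$ and $\clopn^\Gamma$ (the latter because smoothing operators on $M$ clearly restrict to smoothing operators on $M/\Gamma$).

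The main step is the surjectivity of the middle vertical arrow. Given $Q \in \overline{\psi^0}(M/\Gamma; F)$ with Schwartz kernel $k_Q$ on $(M/\Gamma)^2$, the idea is to construct a lift by localizing the pulled-back kernel near the diagonal of $M$. Since the action is free and $M$ is compact, there exists a $\Gamma \times \Gamma$-invariant open neighborhood $W$ of the diagonal $\Delta_M \subset M \times M$ such that $W \cap \Delta_\gamma = \emptyset$ for all $\gamma \neq 1$; choose a cut-off $\chi \in C^\infty(M\times M)$, invariant under the diagonal $\Gamma$-action, equal to $1$ on a neighborhood of $\Delta_M$, and supported in $W$. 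On $W$ the map $\pi \times \pi$ is a local diffeomorphism onto a neighborhood of the diagonal in $(M/\Gamma)^2$, so $k_P \ede \chi \cdot (\pi\times\pi)^{*} k_Q$ is a well-defined distribution on $M\times M$. The associated operator $P$ is then classical, pseudodifferential of order $0$, and $\Gamma$-invariant (by invariance of $\chi$ and of $k_Q$ viewed upstairs), and unwinding the construction gives $V^{*} P V = Q + R$ with $R$ a smoothing operator on $M/\Gamma$. Any such $R$ lifts back to a $\Gamma$-invariant smoothing operator on $M$ by $V R V^{*} \in \maK(L^2(M;E))^\Gamma$, using Proposition~\ref{prop.image}; subtracting produces the desired exact lift. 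Finally, surjectivity of $\pi_1$ on the regularizing ideals follows by a diagram chase: given $Q \in \overline{\psi^{-1}}(M/\Gamma; F)$, lift it to $P \in \clop^\Gamma$ by the previous step; then $\phi(\sigma_0(P)) = \sigma_0(Q) = 0$ forces $\sigma_0(P) = 0$ (since $\phi$ is an isomorphism), so $P \in \clopn^\Gamma$.

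The main obstacle I anticipate is the surjectivity step: making precise that $\chi \cdot (\pi\times\pi)^{*}k_Q$ is the Schwartz kernel of a \emph{classical} pseudodifferential operator of the correct order on $M$ (not merely a properly supported distribution with the right singular behavior), and that the contributions to $V^{*} P V$ coming from off-diagonal pieces $\gamma U_0 \times U_0$, $\gamma \neq 1$, are all smoothing thanks to the choice of $\chi$. Everything else reduces to elementary covering-space calculations and standard exactness properties already recorded in Corollary~\ref{cor.full.symbol} and Proposition~\ref{prop.image}.
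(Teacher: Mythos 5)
Your argument is sound, but note that the paper does not actually prove Proposition~\ref{prop.free}: it is invoked as a well-known fact with references (\cite{ConnesBook, documenta}), so you are supplying the standard covering-space argument that the authors chose to cite rather than write out. Your route --- a unitary $V : L^2(M/\Gamma;F)\to L^2(M;E)_1=L^2(M;E)^\Gamma$, descent of invariant kernels modulo smoothing terms using that the off-diagonal sets $\gamma U_0\times U_0$, $\gamma\neq 1$, avoid $\Delta_M$, and a cut-off lift of kernels from $M/\Gamma$ --- is exactly the expected proof and is correct. One slip to fix: an open set $W\supset\Delta_M$ that is invariant under the full product action of $\Gamma\times\Gamma$ necessarily contains every $\Delta_\gamma=(\gamma\times 1)\Delta_M$, so it cannot satisfy $W\cap\Delta_\gamma=\emptyset$ for $\gamma\neq 1$; you mean (and later correctly require of $\chi$) invariance under the \emph{diagonal} action only, which is what is needed for $\Gamma$-invariance of the lifted operator and is achievable since the $\Delta_\gamma$, $\gamma \neq 1$, form a closed diagonal-invariant set disjoint from $\Delta_M$. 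Also, your explicit kernel lift of a general $Q\in\overline{\psi^{0}}(M/\Gamma;F)$ is more than is needed for surjectivity of the middle arrow: once the right vertical arrow is the isomorphism $\phi$ and the left vertical arrow is onto $\maK(L^2(M/\Gamma;F))=\overline{\psi^{-1}}(M/\Gamma;F)$ (which, as you note, follows from Proposition~\ref{prop.image} because $\Gamma$ acts trivially on the isotypical component $\alpha=1$), surjectivity in the middle follows from the exactness of the two rows by a short diagram chase, so the cut-off construction is only genuinely needed to check that $\pi_1$ lands in $\overline{\psi^{0}}(M/\Gamma;F)$ and that the right square commutes.
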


For $G \subset M$, let $A_G \ede \maC_0(S^*G; \End(E))$ and consider
the {\em surjective} map
\begin{multline}\label{eq.def.maRG}
  \maR_G \, : \, A_G^\Gamma \ede \maC_0(S^*G; \End(E))^\Gamma \ \simeq
  \ \overline{\psi^{0}}(G; E)^\Gamma / \overline{\psi^{-1}}(G;
  E)^\Gamma \\
  \to \pi_\alpha( \overline{\psi^{0}}(G;
  E)^\Gamma)/\pi_\alpha(\overline{\psi^{-1}}(G; E)^\Gamma) \,.
\end{multline}

\begin{proposition}\label{prop.free.isomorphism}
Let $\Gamma$ be a finite group acting on a smooth
compact manifold $M$ (without boundary). Assume that the action of
$\Gamma$ is free on a dense, open subset of $M$. Let $E \to M$ be an
equivariant vector bundle. Then the map $\maR_M :
\maC_0(S^*M; \End(E))^\Gamma \to \pi_1( \overline{\psi^{0}}(M;
E)^\Gamma)/\pi_1(\overline{\psi^{-1}}(G; E)^\Gamma)$ of Equation
\eqref{eq.def.maRG} is injective, and hence an isomorphism of
algebras.
\end{proposition}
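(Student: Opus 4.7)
The plan is to establish injectivity of $\maR_M$ by a localization argument that reduces matters to the free-action case already handled in Proposition~\ref{prop.free}. Suppose $\sigma \in \maC_0(S^*M; \End(E))^\Gamma$ satisfies $\maR_M(\sigma) = 0$, and pick any lift $P \in \overline{\psi^0}(M;E)^\Gamma$ with $\sigma_0(P) = \sigma$. The hypothesis translates to $\pi_1(P) \in \pi_1(\overline{\psi^{-1}}(M;E)^\Gamma)$, and Proposition~\ref{prop.image} identifies this with the statement that $\pi_1(P)$ is compact on $L^2(M;E)^\Gamma$. Since $\sigma$ is continuous and the free locus $U \subset M$ is dense by hypothesis, it will be enough to show $\sigma = 0$ on $S^*U$.

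Fix $x \in U$. Because $\Gamma$ is finite and acts freely at $x$, one can find a small open neighborhood $W$ of $x$ with $\gamma W \cap W = \emptyset$ for $\gamma \neq e$, so that $V \ede \Gamma W \subset U$ is $\Gamma$-invariant and $V \to V/\Gamma$ is a covering with $V/\Gamma$ a smooth (noncompact) manifold. Choose $\varphi \in \maC^\infty(M)^\Gamma$ with $\supp \varphi \subset V$ and $\varphi(x) \neq 0$, and set $T \ede \varphi P \varphi$. Then $T \in \overline{\psi^0}(M;E)^\Gamma$ has Schwartz kernel supported in $V \times V$, so it may be regarded as an element of $\overline{\psi^0}(V;E)^\Gamma$ with compactly supported kernel; its principal symbol is $\sigma_0(T) = \varphi^2 \sigma$, and $\pi_1(T) = \varphi \pi_1(P) \varphi$ is still compact.

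Now apply Proposition~\ref{prop.free} to the free action of $\Gamma$ on $V$: the operator $T$ descends to some $\bar T \in \overline{\psi^0}(V/\Gamma; F)$ (with $F = E/\Gamma$), and the natural unitary $L^2(V;E)^\Gamma \cong L^2(V/\Gamma; F)$ intertwines $\pi_1(T)$ with $\bar T$. Compactness of $\pi_1(T)$ therefore forces $\sigma_0(\bar T) = 0$, and the symbol isomorphism $\maC_0(S^*V; \End(E))^\Gamma \simeq \maC_0(S^*V/\Gamma; \End(F))$ of the same proposition then gives $\varphi^2 \sigma = 0$ on $S^*V$. Since $\varphi(x) \neq 0$, we conclude $\sigma|_{S^*_x M} = 0$; as $x \in U$ was arbitrary, $\sigma$ vanishes on $S^*U$ and hence on all of $S^*M$ by density and continuity.

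The main technical point to watch is the legitimacy of applying Proposition~\ref{prop.free} to the noncompact open submanifold $V$. This is fine because that proposition is formulated for general (possibly noncompact) $M$ using $\maC_0$, and the compactly supported Schwartz kernel of $T$ ensures that $\bar T$ genuinely lies in $\overline{\psi^0}(V/\Gamma; F)$; everything else is a standard cutoff manipulation inside the pseudodifferential calculus.
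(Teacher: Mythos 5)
Your proof is correct, but it runs along a more concrete path than the paper's. The paper argues entirely at the $C^*$-algebra level: writing $M_0$ for the dense open free locus, it notes that the restriction of $\maR_M$ to the ideal $\maC_0(S^*M_0;\End(E))^\Gamma$ is $\maR_{M_0}$, which is injective by Proposition \ref{prop.free}, and then concludes because that ideal is \emph{essential} in $\maC_0(S^*M;\End(E))^\Gamma$ (a $*$-morphism injective on an essential ideal is injective). You instead take $\sigma$ in the kernel, quantize it, and test it pointwise over the free locus by conjugating with invariant cutoffs supported in small invariant tubes $V=\Gamma W$, pushing the cut-down operator to $V/\Gamma$ via Proposition \ref{prop.free} and invoking the exact symbol sequence there; density and continuity of $\sigma$ finish the argument. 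Both proofs rest on the same two inputs (Proposition \ref{prop.free} on the free part, density of the free locus), but your cutoff computation does real work: it is essentially a proof of the compatibility ``$\maR_M$ restricted to symbols supported over the free locus agrees with $\maR_{M_0}$'' that the paper asserts without detail, so your version is longer but more self-contained. Two steps are worth flagging, neither a gap: (i) passing from compactness of $\bar T$ to $\sigma_0(\bar T)=0$ uses $\maK=\overline{\psi^{-1}}(V/\Gamma;F)=\ker\sigma_0$ on the \emph{noncompact} quotient $V/\Gamma$, which is exactly Equation \eqref{eq.compact} combined with the bottom row of Proposition \ref{prop.free} (the paper makes the same use of that proposition on the noncompact set $M_0$); (ii) the invariant cutoff $\varphi$ with $\varphi(x)\neq 0$ should be produced by averaging a bump supported in $W$ over $\Gamma$, which works precisely because the translates $\gamma W$ are pairwise disjoint.
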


\begin{proof}
Let $M_0 \subset M$ be an open, dense subset on which $\Gamma$ acts
freely. Proposition \ref{prop.free} for $M$ replaced with $M_{0}$
shows that $\maR_{M_0}$ is injective. Hence $\maR_{M}$ is injective on
$\maC_0(S^*M_{0}; \End(E))^\Gamma$, because the restriction of
$\maR_{M}$ to $\maC_0(S^*M_{0}; \End(E))^\Gamma$ is
$\maR_{M_0}$. Since the later is an essential ideal in
$\maC_0(S^*M; \End(E))^\Gamma$, it follows that $\maR_M$ is also
injective (everywhere on $\maC_0(S^*M; \End(E))^\Gamma$).
\end{proof}

\section{The principal symbol}
\label{sec.principal.symbol}

Let us fix an irreducible representation $\alpha$ of $\Gamma$ and
consider the fundamental restriction morphism $\pi_\alpha$ of Equation
\eqref{eq.restriction}. See also Subsection
\ref{ssec.representations}, especially Equation
\eqref{eq.restriction2}, for more details on the morphism
$\pi_\alpha$. We are mostly concerned with the morphism $\pi_\alpha :
\clop^\Gamma \to \maL(L^2(M; E)_\alpha)$ and, in this section, we
identify the quotient
\begin{equation*}
  \pi_\alpha(\clop^\Gamma)/\pi_\alpha(\clopn^\Gamma) \,.
\end{equation*}
Since $\pi_\alpha(\clopn^\Gamma)$ was identified in the previous
section, information on the above quotient algebra will give further
insight into the structure of the algebra $ \pi_\alpha(\clop^\Gamma)$
and will provide us, eventually, with Fredholm conditions.

In the beginning of this section, we continue to assume that $M$ is a
complete Riemannian manifold and that $\Gamma$ is a compact Lie group
acting by isometries on $M$.  Since the results are different in the
discrete and non-discrete case, we will assume beginning with
Subsection \ref{ssec.isotropy} that $\Gamma$ is finite. Moreover, for
the main result, we shall assume that $\Gamma$ is abelian, since the
abelian case is simpler and presents some additional features. Some
intermediate results are true only in the abelian case. It is also the
abelian case that will be used for our application to boundary value
problems.

\subsection{The primitive ideal spectrum of the symbol algebra}
We now turn to the description of the primitive ideal spectrum of the
algebra $\maC_0(S^*M; \End(E))^\Gamma$ of symbols. For simplicity, for $O
\subset M$ open, we denote again
$A_O := \maC_0(S^*O; \End(E))$, as in the definition of the morphism
$\maR_O$ of Equation \eqref{eq.def.maRG}.
We shall be mostly concerned with the cases $O = M$ and $O = O_0 :=
M_{(\Gamma_0)}$. We have the following standard result.

\begin{proposition}\label{prop.str2}
The algebra $Z_M := \maC_0(S^*M)^\Gamma = \maC_0(S^*M/\Gamma)$
identifies with a central subalgebra of $A_M^\Gamma :=
\maC_0(S^*M; \End(E))^\Gamma$. Let $z_\xi$ be the maximal ideal of
$Z_M$ associated to the orbit $\Gamma \xi$ for some $\xi \in S_x^*M$,
let $E_x$ be a fiber of $E$ corresponding to $\xi$, and let $E_x
\simeq \oplus_{j=1}^N \beta_j^{k_j}$ be its decomposition into
$\Gamma_\xi$-isotypical components, with $\beta_j$ simple,
non-isomorphic $\Gamma_\xi$ modules. Then $A_M^{\Gamma}/z_\xi
A_M^{\Gamma} \simeq \End(E_x)^{\Gamma_\xi}$ is a semi-simple,
finite-dimensional (complex) algebra with $N$ simple factors
$\End_{\Gamma_\xi}(\beta_j^{k_j}) \simeq M_{k_j}(\CC)$, $j \in \{1, 2,
\ldots, N\}$.
\end{proposition}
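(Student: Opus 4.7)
The centrality of $Z_M$ in $A_M^\Gamma$ is formal: scalar-valued $\Gamma$-invariant continuous functions act on sections of $\End(E)$ by pointwise multiplication and commute with everything. The identification $\maC_0(S^*M)^\Gamma \simeq \maC_0(S^*M/\Gamma)$ is Gelfand duality applied to the Hausdorff quotient $S^*M/\Gamma$ (Hausdorff because $\Gamma$ is compact). Next, the plan is to construct an evaluation morphism $\mathrm{ev}_\xi : A_M^\Gamma \to \End(E_x)^{\Gamma_\xi}$, $f \mapsto f(\xi)$. The $\Gamma$-invariance of $f$ reads $f(\gamma\eta) = \gamma f(\eta)\gamma^{-1}$ for $\gamma \in \Gamma$, $\eta \in S^*M$; taking $\eta = \xi$ and restricting $\gamma$ to $\Gamma_\xi$ shows the image of $\mathrm{ev}_\xi$ lies in $\End(E_x)^{\Gamma_\xi}$. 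Since $z_\xi$ consists of $\Gamma$-invariant scalar functions vanishing on the orbit $\Gamma\xi$, clearly $z_\xi A_M^\Gamma \subset \ker \mathrm{ev}_\xi$, so $\mathrm{ev}_\xi$ descends to a morphism out of $A_M^\Gamma / z_\xi A_M^\Gamma$.

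To check that the kernel is exactly $z_\xi A_M^\Gamma$, I would use a $\Gamma$-invariant cutoff argument. Given $f \in A_M^\Gamma$ with $f|_{\Gamma\xi} = 0$ and $\epsilon > 0$, by continuity of $f$ and compactness of the orbit $\Gamma\xi$, there is a $\Gamma$-invariant open neighborhood $V$ of $\Gamma\xi$ on which $\|f(\eta)\| < \epsilon$. Choose a $\Gamma$-invariant $\phi \in \maC_c(S^*M)$ with $0 \le \phi \le 1$, $\phi \equiv 1$ near $\Gamma\xi$, and $\supp \phi \subset V$ (take any bump function and average over $\Gamma$). Then $(1-\phi)f \in z_\xi A_M^\Gamma$ since $1-\phi$ vanishes near $\Gamma\xi$, while $\|f - (1-\phi)f\| = \|\phi f\| \le \epsilon$. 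This shows $f \in \overline{z_\xi A_M^\Gamma}$, and hence $\ker \mathrm{ev}_\xi = z_\xi A_M^\Gamma$ (the closure being absorbed into the closed ideal).

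The surjectivity of $\mathrm{ev}_\xi$ is the main technical step. I would exploit the slice theorem of Subsection \ref{ssec.slice} applied to the $\Gamma$-action on $S^*M$: there is a $\Gamma$-invariant tube $W_\xi \simeq \Gamma \times_{\Gamma_\xi} U_\xi$ about $\xi$ for some $\Gamma_\xi$-invariant slice $U_\xi$ through $\xi$, and by the local normal form \eqref{eq.trivial.E} we may assume $E|_{U_\xi} \simeq U_\xi \times E_x$ as $\Gamma_\xi$-equivariant bundles. Given $T \in \End(E_x)^{\Gamma_\xi}$, the constant section $\eta \mapsto T$ on $U_\xi$ is $\Gamma_\xi$-equivariant (precisely because $T$ commutes with $\Gamma_\xi$), so it extends uniquely to a $\Gamma$-invariant section of $\End(E)$ over $W_\xi$. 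Multiplying by a $\Gamma$-invariant bump function supported in $W_\xi$ and identically $1$ near $\xi$ yields a global section $f \in A_M^\Gamma$ with $\mathrm{ev}_\xi(f) = T$. Combining kernel and surjectivity gives $A_M^\Gamma / z_\xi A_M^\Gamma \simeq \End(E_x)^{\Gamma_\xi}$; the final decomposition $\End(E_x)^{\Gamma_\xi} \simeq \oplus_{j=1}^N \End_{\Gamma_\xi}(\beta_j^{k_j}) \simeq \oplus_{j=1}^N M_{k_j}(\CC)$ is exactly Remark \ref{rem.group} applied to $H = \Gamma_\xi$ and the $\Gamma_\xi$-module $E_x = \oplus_j \beta_j^{k_j}$.
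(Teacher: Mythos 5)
Your proof is correct, but it takes a genuinely more hands-on route than the paper's. The paper argues functorially: it lets $J \subset \maC_0(S^*M)$ be the (non-invariant) ideal of functions vanishing on the orbit $\Gamma\xi$, identifies the quotient $A_M/JA_M$ (sections of $\End(E)$ over the orbit $\Gamma/\Gamma_\xi$) with the induced module $\Ind_{\Gamma_\xi}^{\Gamma}(\End(E_x))$, applies the exact functor of $\Gamma$-invariants to $0 \to JA_M \to A_M \to A_M/JA_M \to 0$, and then invokes Frobenius reciprocity in the form \eqref{eq.Frobenius2}, $\Ind_{\Gamma_\xi}^{\Gamma}(V)^\Gamma \simeq V^{\Gamma_\xi}$, before finishing with Remark \ref{rem.group} exactly as you do. Your evaluation morphism $\operatorname{ev}_\xi$ is the concrete incarnation of that Frobenius isomorphism, so the two arguments are morally identical; what your version buys is that it makes explicit two points the paper leaves implicit, namely the surjectivity (your slice-and-cutoff extension of a given $T \in \End(E_x)^{\Gamma_\xi}$, which the paper gets from surjectivity of $(\cdot)^\Gamma$ on quotients by compactness of $\Gamma$) and the identification of the kernel with $z_\xi A_M^\Gamma$ (the paper silently equates $(JA_M)^\Gamma$ with $z_\xi A_M^\Gamma$, which is exactly your cutoff step). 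The only place to be slightly more careful is that last step when $S^*M$ is not compact: there $1-\phi$ does not itself lie in $z_\xi \subset \maC_0(S^*M)^\Gamma$, so to see that $h := (1-\phi)f$ belongs to $z_\xi A_M^\Gamma$ one should factor it, e.g.\ note that the invariant scalar functions $u_n(|h|)$, with $u_n(t) = \min(nt,1)$ and $|h|$ the fiberwise norm, lie in $z_\xi$ and satisfy $u_n(|h|)h \to h$; combined with the standard fact that the product of a closed central ideal with the algebra is already closed, this closes the argument (and in the compact case, which is the one needed for the main theorem, $1-\phi \in z_\xi$ directly and the issue disappears). This is a cosmetic refinement, not a gap, and the paper is no more careful on this point.
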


\begin{proof}
We have $\maC_0(M) \subset \maC_0(M; \End(E)) \subset \End(\maC_0(M;
E))$, with $f \in \maC_0(M)$ acting as a scalar on each fiber
$E_x$. In fact, this identifies $\maC_0(M)$ with the center
$Z(\maC_0(M; \End(E)))$ of $\maC_0(M; \End(E))$. By considering
$\Gamma$ invariant functions, we obtain that $Z_M :=
\maC_0(S^*M)^\Gamma = \maC_0(S^*M/\Gamma)$ is contained in the center
$Z(A_M^\Gamma)$ of $A_M^\Gamma := \maC_0(S^*M; \End(E))^\Gamma$. Let
$\Gamma\xi$ denote the orbit in $S^*M$ that we consider and let $J$ be
the (non-maximal, in general) ideal of $\maC_0(S^*M)$ corresponding to
functions vanishing on this orbit. Then $J$ is $\Gamma$ invariant and
$J^\Gamma = z_\xi$. By taking the $\Gamma$ invariants in the exact
sequence $0 \to JA_M \to A_M \to A_M/JA_M \to 0$ and using
  Frobenius reciprocity for $A_M/JA_M \simeq
  \Ind_{\Gamma_\xi}^\Gamma(\End(E_x))$, we obtain that
\begin{equation}
  A_M^\Gamma/z_\xi A_M^\Gamma \, \simeq \, (A_M/JA_M)^\Gamma \,
  \simeq \End(E_x)^{\Gamma_\xi} \,.
\end{equation}
The proof is completed using Remark \ref{rem.group} for $H =
\Gamma_{\xi}$.
\end{proof}

See \cite{BGreenR, EchterhoffWilliams14, GreenActa, Rosenberg81,
  williamsBook} for similar results. Recall that if $\phi : Z \to
M(A)$ is a central *-morphism (i.e. $\phi(z)a = a\phi(z)$, for $a\in
A$ and $z \in Z$) such that $\phi(Z)A = A$, then it defines a natural
``central character'' map $\phi^* : \Prim(A) \to \Prim(Z)$ by Schur's
Lemma. The same proof yields the following.

\begin{corollary}\label{cor.structure}
There is a one-to-one correspondence between the primitive ideals of
$A_M^\Gamma := \maC_0(S^*M; \End(E))^\Gamma$ and the $\Gamma$-orbits
of the pairs $(\xi, \rho)$, where $\xi \in S_x^*M$ and $\rho \in
\widehat \Gamma_\xi$ appears in $E_x$ (i.e. $\Hom_{\Gamma_\xi}(\rho,
E_x) \neq 0$). The group $\Gamma$ acts by joint conjugation on both
$\xi$ and $\rho$. The inclusion $Z_M := \maC_0(S^*M)^\Gamma \to
A_M^\Gamma$ is such that the associated canonical central character
map of spectra
\begin{equation*}
  \Prim(A_M^\Gamma) \ \to \ \Prim(Z_M) \seq S^*M/\Gamma
\end{equation*}
is continuous, finite-to-one, and maps the orbit $\Gamma(\xi, \rho)$
to the orbit $\Gamma \xi$.
\end{corollary}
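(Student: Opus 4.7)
The plan is to build on Proposition \ref{prop.str2}, which already identifies the fiber of the central character map over an orbit $\Gamma\xi$, and then enumerate the primitive ideals fiberwise. First I would invoke Proposition \ref{prop.str2} to set up the central inclusion $Z_M = \maC_0(S^*M/\Gamma) \hookrightarrow A_M^\Gamma$ and obtain, via Schur's lemma as recalled at the end of Subsection \ref{ssec.primitive.spec}, the canonical continuous central character map
\begin{equation*}
  \Phi : \Prim(A_M^\Gamma) \, \to \, \Prim(Z_M) \seq S^*M/\Gamma \,.
\end{equation*}
By the correspondence between ideals and closed subsets of the primitive ideal spectrum (Subsection \ref{ssec.primitive.spec}), the preimage $\Phi^{-1}(\Gamma\xi)$ coincides with $\Prim(A_M^\Gamma/z_\xi A_M^\Gamma)$, where $z_\xi$ is the maximal ideal of $Z_M$ attached to the orbit $\Gamma\xi$.

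Next I would apply Proposition \ref{prop.str2} again to identify $A_M^\Gamma / z_\xi A_M^\Gamma \simeq \End(E_x)^{\Gamma_\xi}$, and Remark \ref{rem.group} to identify the primitive ideals of this finite-dimensional semisimple algebra with the set $\{\beta_1,\dots,\beta_N\}$ of isomorphism classes of simple $\Gamma_\xi$-modules occurring in $E_x$. Combining, each primitive ideal $\mathfrak{P}$ of $A_M^\Gamma$ corresponds to a unique pair $(\Gamma\xi, \rho)$ with $\rho \in \widehat{\Gamma_\xi}$ satisfying $\Hom_{\Gamma_\xi}(\rho, E_x) \neq 0$. In particular, $\Phi$ is finite-to-one with fibers of cardinality $N = N(\xi)$, and by construction it sends the class of $(\xi,\rho)$ to $\Gamma\xi$.

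The main technical point is to show that this labelling is well-defined and $\Gamma$-equivariant, i.e.\ that it depends only on the joint $\Gamma$-orbit of $(\xi,\rho)$, not on the chosen representative. For this I would check that if $\xi' = \gamma\xi$ then $\Gamma_{\xi'} = \gamma\Gamma_\xi\gamma^{-1}$, the fiber identification $E_{x'}\simeq E_x$ given by the action of $\gamma$ is a $\gamma\Gamma_\xi\gamma^{-1}$-to-$\Gamma_\xi$ intertwiner, and so the isotypical decomposition of $E_{x'}$ is the one transported from $E_x$ along this intertwiner. Hence the primitive ideal labelled by $(\xi',\gamma\cdot\rho)$ coincides with the one labelled by $(\xi,\rho)$, giving a well-defined bijection
\begin{equation*}
  \Prim(A_M^\Gamma) \, \longleftrightarrow \, \bigl\{(\xi,\rho) \,\big|\, \xi\in S^*M,\ \rho\in\widehat{\Gamma_\xi},\ \Hom_{\Gamma_\xi}(\rho,E_x)\neq 0\bigr\}/\Gamma \,.
\end{equation*}

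The only potential obstacle is verifying the continuity and finite-to-one property of $\Phi$ rigorously; continuity is automatic from the general construction of central character maps, while finite-to-oneness is a direct consequence of each $\End(E_x)^{\Gamma_\xi}$ being finite-dimensional. Both are therefore immediate from the general $C^*$-algebraic framework recalled in Subsection \ref{ssec.primitive.spec}, and the corollary follows.
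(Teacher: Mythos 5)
Your proposal is correct and follows essentially the same route as the paper: both arguments use Schur's lemma to reduce an irreducible representation to a character of the central subalgebra $Z_M$, then apply Proposition \ref{prop.str2} to identify the quotient $A_M^\Gamma/z_\xi A_M^\Gamma$ with $\End(E_x)^{\Gamma_\xi}$ and Remark \ref{rem.group} to enumerate its finitely many simple factors by the $\beta_j$ occurring in $E_x$. Your explicit verification that the labelling depends only on the $\Gamma$-orbit of $(\xi,\rho)$ is a welcome elaboration of a point the paper treats only in passing, but it does not change the substance of the argument.
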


\begin{proof}
Let $\pi$ be an irreducible representation of $A_M^\Gamma :=
\maC_0(S^*M; \End(E))^\Gamma$. Then $\pi$ is a multiple of a character
on $Z_M := \maC_0(S^*M)^\Gamma \subset Z(A_M^\Gamma)$, by Schur's
lemma. Let this character correspond to the orbit $\Gamma \xi \in
S^*M/\Gamma$, with $\xi \in S^*_xM$ and denote by $z_\xi$ the
corresponding maximal ideal of $Z_M$, as in the proof of the last
lemma. In other words, $z_\xi$ is the value of the central character
map corresponding to the inclusion $Z_M \subset A_M^\Gamma$ applied to
$\pi$. Then $\pi$ factors out through an irreducible representation
of $A_M^{\Gamma}/z_\xi A_M^{\Gamma} \simeq \End(E_x)^{\Gamma_\xi}$.
Let us write $E_x \simeq \oplus_{j=1}^N \beta_j^{k_j}$ with $\beta_j$
non-isomorphic simple $\Gamma_\xi$ modules, as in the statement of
Proposition \ref{prop.str2}. Then $\End(E_x)^{\Gamma_\xi} \simeq
\oplus_{j=1}^N \End_{\Gamma_\xi}(\beta_j^{k_j})$, a direct sum of
simple algebras. Thus $\pi$ factors through one of the simple algebras
$\End_{\Gamma_\xi}(\beta_j^{k_j})$ This associates to $\pi$ the pair
$(\xi, \rho) = (\xi, \beta_j)$, as desired. This pair is not unique,
but depends on the choice of $\xi$. It becomes unique modulo the
action of $\Gamma$, however. Conversely, given such a pair $(\xi,
\rho)$, we obtain an irreducible representation of $A_M^\Gamma$
following exactly the same procedure in reverse order. The first part
of the result follows.

To prove that $\phi^*$ is finite to one, we notice that, by
  construction, $\phi^*(\xi, \rho) = \xi$. Since only a finite number
  of (isomorphism classes of) simple $\Gamma_\xi$ modules appears in
  $E_x$, the finiteness follows.
\end{proof}

\begin{remark}\label{rem.description}
Let us denote by $X_{M, E, \Gamma}$ the set of pairs $(\xi, \rho)$,
where $\xi \in T_x^*M\setminus\{0\}$ and $\rho \in \widehat
\Gamma_\xi$ appears in $E_x$ (i.e. $\Hom_{\Gamma_\xi}(\rho, E_x) \neq
0$), as in the statement of Corollary \ref{cor.structure}. The main
result of that corollary is a natural bijection
\begin{equation}\label{eq.bijection.Omega}
   X_{M, E, \Gamma}/\Gamma \, \simeq\, \Prim(A_M^\Gamma)\,.
\end{equation}
This bijection can be explicitely described as follows: to an orbit
$\Gamma(\xi,\rho)$ in $X_{M,E,\Gamma}$ is associated $\ker
(\pi_{\xi,\rho})$ in $\Prim A_M^\Gamma$, where for any $f \in
A_M^\Gamma$ we define $\pi_{\xi,\rho}(f)$ as the restriction of
$f(\xi)$ to the $\rho$-isotypical component of $E_x$.
\end{remark}

\subsection{Factoring out the minimal isotropy}
\label{ssec.isotropy}
Recall that we are assuming $M$ to be connected; in that case there is
a minimal isotropy type for the action of $\Gamma$. We shall also
assume from now on that $\Gamma$ is abelian, for the reasons discussed
in the Introduction. In particular, it is the case needed for our
applications to boundary value problems and, moreover, some results
are not true in the non-abelian case.

Let $\alpha \in \widehat{\Gamma}$ as before, and recall that we want
to determine the structure of the quotient
$\pi_\alpha(\clop^\Gamma)/\pi_\alpha(\clopn^\Gamma)$ of the restricted
algebras to the $\alpha$-isotypical component. To this end, recall the
morphism 
\begin{equation*}
  \maR_M : A_M^\Gamma \ede \maC_0(S^*M; \End(E))^\Gamma \, \to
\pi_\alpha( \overline{\psi^{0}}(M;
E)^\Gamma)/\pi_\alpha(\overline{\psi^{-1}}(M; E)^\Gamma)
\end{equation*} 
of Equation \eqref{eq.def.maRG}. The main result of this subsection is to
determine the kernel of this morphism.

The main reason why the abelian case is simpler than the general case
is that in the abelian case all minimal isotropy subgroups of $\Gamma$
acting on $M$ coincide. The (unique) minimal isotropy subgroup of
$\Gamma$ acting on $M$ will be denoted by $\Gamma_0$, as
before. Recall that the set $O_0 := M_{(\Gamma_0)}$ of points $x \in
M$ with isotropy $\Gamma_x = \Gamma_0$ is called the principal orbit
bundle of $M$; it is a dense, open subset of $M$. For every (other) $x
\in M$, we have $\Gamma_0 \subset \Gamma_x$.

We obtain that the group $\Gamma_0$ acts trivially on $M$. Moreover,
there exists a unitary group morphism (representation) $\Gamma_0
\to \End(E)$ that implements the action of $\Gamma_0$ on $\clop$. Let
$p^{(0)}_\beta \in C^*(\Gamma_0)$, $\beta \in \widehat{\Gamma}_0$, be
the central projectors associated to the irreducible representations
of $\Gamma_0$ (the additional exponent is to differentiate them from
the projectors $p_\alpha$, $\alpha \in \widehat{\Gamma}$). Corollary
\ref{cor.exact} then gives the exact sequence
\begin{equation*}
   0 \to p^{(0)}_\beta \clopn^{\Gamma_0} \to p^{(0)}_\beta
   \clop^{\Gamma_0} \to \maC_0(S^*M; \End(p^{(0)}_\beta E))^{\Gamma_0}
   \to 0 \, .
\end{equation*}
Moreover,
\begin{equation*}
	\clop^{\Gamma_0} \ \simeq \ \oplus_{\beta \in
          \widehat{\Gamma}_0}\ p^{(0)}_\beta \clop^{\Gamma_0}\, .
\end{equation*}
(Here the direct sum is finite, so there is no need to include the
``$c_0$''-specification like in Corollary \ref{cor.exact}.) Since the actions of
$\Gamma$ and $\Gamma_0$ commute, we can further take the
$\Gamma$-invariants to obtain:
\begin{equation}\label{eq.abelian.one}
   0 \to p^{(0)}_\beta \clopn^{\Gamma} \to p^{(0)}_\beta
   \clop^{\Gamma} \to \maC_0(S^*M; \End(p^{(0)}_\beta E))^{\Gamma} \to
   0 \, .
\end{equation}
Moreover, 
\begin{equation}\label{eq.abelian.two}
   \clop^{\Gamma} \ \simeq \ \oplus_{\beta \in
     \widehat{\Gamma}}\ p^{(0)}_\beta \clop^{\Gamma}\, .
\end{equation}

In particular, we have that

\begin{lemma}\label{lemma.direct.sum.symbol}
Let $\Gamma$ be a finite abelian group and $E \to M$ a
$\Gamma$-equivariant vector bundle over a smooth, compact, connected
manifold $M$ (thus without boundary). Let $\Gamma_0 \subset \Gamma$ be
the minimal isotropy group. We have
\begin{equation*}
   \maC_0(S^*M; \End(E))^{\Gamma} \ \simeq \ \oplus_{\beta \in
     \widehat{\Gamma}_0}\ \maC_0(S^*M; \End(p^{(0)}_\beta E))^{\Gamma}
\end{equation*}
\end{lemma}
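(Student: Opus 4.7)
The plan is to prove the decomposition in two stages, first averaging over $\Gamma_0$ (which acts trivially on $M$) and then over the remaining quotient. The key structural fact is that, since $\Gamma$ is abelian, the actions of $\Gamma$ and of its subgroup $\Gamma_0$ on $E$ commute, so the $\Gamma_0$-isotypical decomposition of $E$ is automatically $\Gamma$-equivariant.

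First I would recall that, by the discussion preceding the lemma, $\Gamma_0 \subset \Gamma$ acts trivially on $M$, and its action on $E$ is implemented by a unitary representation $\Gamma_0 \to \Aut(E)$ of $\Gamma_0$ on the fibers. Since $\Gamma_0$ is abelian, each irreducible representation $\beta \in \widehat{\Gamma}_0$ is one-dimensional, and the central projectors $p^{(0)}_\beta \in C^*(\Gamma_0)$ produce a fiberwise decomposition into $\Gamma_0$-isotypical subbundles $E = \bigoplus_{\beta \in \widehat{\Gamma}_0} p^{(0)}_\beta E$. Because $\Gamma$ is abelian, the action of any $\gamma \in \Gamma$ commutes with the projectors $p^{(0)}_\beta$, hence carries $p^{(0)}_\beta E$ to itself (as a subbundle lying over $\gamma\cdot x$), so each $p^{(0)}_\beta E \to M$ is a $\Gamma$-equivariant subbundle.

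Next I would compute the $\Gamma_0$-invariants of the endomorphism bundle. Writing
\begin{equation*}
   \End(E) \ \simeq \ \bigoplus_{\beta,\beta' \in \widehat{\Gamma}_0} \Hom\bigl(p^{(0)}_{\beta'} E,\ p^{(0)}_\beta E\bigr)\,,
\end{equation*}
the action of $g \in \Gamma_0$ on a section $T$ of $\Hom(p^{(0)}_{\beta'} E, p^{(0)}_\beta E)$ is multiplication by $\beta(g) \overline{\beta'(g)}$, which is non-trivial unless $\beta = \beta'$. Averaging over $\Gamma_0$ therefore annihilates the off-diagonal blocks and yields
\begin{equation*}
  \End(E)^{\Gamma_0} \ \simeq \ \bigoplus_{\beta \in \widehat{\Gamma}_0} \End\bigl(p^{(0)}_\beta E\bigr)\,,
\end{equation*}
as a direct sum of $\Gamma$-equivariant bundles. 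Passing to continuous sections (which commutes with finite direct sums and with the exact functor of taking invariants of a compact group action) gives
\begin{equation*}
  \maC_0(S^*M; \End(E))^{\Gamma_0} \ \simeq \ \bigoplus_{\beta \in \widehat{\Gamma}_0} \maC_0\bigl(S^*M; \End(p^{(0)}_\beta E)\bigr)\,.
\end{equation*}

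Finally, since $\Gamma_0 \subset \Gamma$ and the summands are $\Gamma$-invariant subalgebras, taking a further average over $\Gamma$ (equivalently, over $\Gamma/\Gamma_0$, which is again exact since $\Gamma$ is finite) respects the decomposition and produces the stated isomorphism. The only subtle point — and the one that genuinely requires the abelian hypothesis — is the commutation of the $\Gamma$- and $\Gamma_0$-actions used to ensure that the $\Gamma_0$-isotypical subbundles are globally $\Gamma$-equivariant; for a non-abelian $\Gamma$ the action of $\Gamma$ would merely permute the $\Gamma_0$-isotypical components (via the conjugation action on $\widehat{\Gamma}_0$) and the clean diagonal decomposition indexed by $\widehat{\Gamma}_0$ would fail.
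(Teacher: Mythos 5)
Your proof is correct and follows essentially the same route as the paper: both first pass to $\Gamma_0$-invariants, use that $\Hom(p^{(0)}_\beta E, p^{(0)}_{\beta'} E)^{\Gamma_0} = 0$ for $\beta \neq \beta'$ to get the block-diagonal decomposition of $\End(E)^{\Gamma_0}$, and then take the remaining $\Gamma/\Gamma_0$-invariants. Your added remarks on why the $\Gamma_0$-isotypical subbundles are $\Gamma$-equivariant (commutativity of $\Gamma$) make explicit a point the paper leaves implicit, but the argument is the same.
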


\begin{proof} We successively have
\begin{multline*}
  \maC_0(S^*M; \End(E))^{\Gamma} \ \simeq \
  \big( \maC_0(S^*M; \End(E))^{\Gamma_0} \big)^{\Gamma/\Gamma_0}
  \\ 
  \simeq \ \big( \maC_0(S^*M; \End(E)^{\Gamma_0})
  \big)^{\Gamma/\Gamma_0} \ \simeq \
  \oplus_{\beta \in \widehat{\Gamma}_0} \ \big(
  \maC_0(S^*M; \End(p^{(0)}_\beta E)^{\Gamma_0})
  \big)^{\Gamma/\Gamma_0} \\ 
  \simeq \ \oplus_{\beta \in \widehat{\Gamma}_0}
  \ \maC_0(S^*M; \End(p^{(0)}_\beta E))^{\Gamma}\,,
\end{multline*}
where we have used that $\Hom(p^{(0)}_\beta E, p^{(0)}_{\beta'}
E)^{\Gamma_0}$ for $\beta \neq \beta' \in \widehat{\Gamma}_0$.
\end{proof}

Let us record now the following corollary of Proposition
\ref{prop.free.isomorphism}.

\begin{corollary}\label{cor.free.isomorphism}
Let $\Gamma$ be a finite abelian group acting on a smooth, connected
compact manifold $M$ (without boundary). Let $E \to M$ be an
equivariant vector bundle. Assume that minimal isotropy is trivial:
$\Gamma_0 = 1$. Then the map $\maR_M : \maC_0(S^*M; \End(E))^\Gamma
\to \pi_\alpha( \overline{\psi^{0}}(M;
E)^\Gamma)/\pi_\alpha(\overline{\psi^{-1}}(O; E)^\Gamma)$ of Equation
\eqref{eq.def.maRG} is injective, and hence an isomorphism of
algebras.
\end{corollary}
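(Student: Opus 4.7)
The plan is to reduce the statement to Proposition \ref{prop.free.isomorphism} by twisting the bundle $E$ with the character $\alpha$, thereby converting the morphism $\pi_\alpha$ on $E$ into the morphism $\pi_1$ on a new bundle. The hypothesis $\Gamma_0 = 1$ means that $\Gamma$ acts freely on the dense, open principal orbit bundle $M_{(1)}$, which is exactly the hypothesis of Proposition \ref{prop.free.isomorphism}.

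Since $\Gamma$ is finite abelian, the irreducible representation $\alpha$ is a character $\alpha : \Gamma \to \CC^*$. I would introduce the twisted $\Gamma$-equivariant vector bundle $E^\alpha \ede E \otimes \underline{\CC}_{\alpha^{-1}}$, where $\underline{\CC}_{\alpha^{-1}}$ denotes the trivial complex line bundle on $M$ endowed with the $\Gamma$-action by $\alpha^{-1}$. Tensoring with a line bundle leaves $\End$ unchanged in a $\Gamma$-equivariant way, since the factors $\alpha(\gamma)$ and $\alpha^{-1}(\gamma)$ cancel in the conjugation action; hence there are canonical identifications
\begin{equation*}
  \maC_0(S^*M; \End(E^\alpha))^\Gamma \seq \maC_0(S^*M; \End(E))^\Gamma,
\end{equation*}
and, via the correspondence $P \mapsto P \otimes \operatorname{id}$, also $\psi^m(M; E^\alpha)^\Gamma \simeq \psi^m(M; E)^\Gamma$, all compatible with the principal symbol map and passing to the norm closures. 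Moreover, the twist exchanges isotypical components: the assignment $s \mapsto s \otimes v$, for $v$ a fixed nonzero vector in $\underline{\CC}_{\alpha^{-1}}$, yields a $\Gamma$-equivariant unitary isomorphism $L^2(M; E)_\alpha \simeq L^2(M; E^\alpha)^\Gamma$, because $\gamma \cdot (s \otimes v) = s \otimes v$ if and only if $\gamma s = \alpha(\gamma) s$. Under these identifications, $\pi_\alpha$ for $E$ is intertwined with $\pi_1$ for $E^\alpha$, so the associated morphisms $\maR_M$ coincide.

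The conclusion then follows by applying Proposition \ref{prop.free.isomorphism} to the bundle $E^\alpha$, which delivers the injectivity of $\maR_M$ for the trivial representation. The main point requiring care is the verification that the twist respects the norm closures of the pseudodifferential algebras and the restriction morphisms $\pi_\alpha$, $\pi_1$; but since the twist is implemented by a $\Gamma$-equivariant unitary equivalence of Hilbert spaces and the bundle isomorphism $\End(E^\alpha) \simeq \End(E)$ is $\Gamma$-equivariant, this compatibility is essentially formal.
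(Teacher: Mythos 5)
Your proposal is correct and follows essentially the same route as the paper: the paper's proof likewise twists the $\Gamma$-action on $E$ by $\alpha^{-1}$ (replacing $\pi$ with $\pi_0 := \pi\alpha^{-1}$, which is your tensoring with $\underline{\CC}_{\alpha^{-1}}$ in slightly different notation) to reduce to the case $\alpha = 1$, and then applies Proposition \ref{prop.free.isomorphism} using that the action is free on the dense open set $M_{(1)}$. Your additional remarks on the compatibility of the twist with the norm closures and the restriction morphisms make explicit what the paper leaves implicit.
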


\begin{proof}
By replacing the action $\pi$ of $\Gamma$ on $E$ with $\pi_0 := \pi
\alpha^{-1}$, that is, with, $\pi_0(g) \xi := \alpha^{-1}(g) \pi(g)
\xi$, we can assume that $\alpha = 1$. The action of $\Gamma$ is
moreover free on the dense open subset $M_{(1)} = M_{\Gamma_0}$ of
$M$.  Proposition \ref{prop.free.isomorphism} then allows us to
conclude.
\end{proof}

We now turn to the main result of this section.

\begin{theorem}\label{thm.ker.maRM}
Let $\Gamma$ be a finite abelian group acting on a smooth compact,
connected manifold $M$ (without boundary) and let $E \to M$ be a
$\Gamma$-equivariant vector bundle.  Then the kernel of the morphism
\begin{multline*}
   \maR_M : \oplus_{\beta \in
     \widehat{\Gamma}_0}\ \maC_0(S^*M; \End(p^{(0)}_\beta E))^{\Gamma}
   \
   \simeq \ \maC_0(S^*M; \End(E))^\Gamma \ =: A_M^\Gamma \\
   \to\ \pi_\alpha(
   \overline{\psi^{0}}(O;
   E)^\Gamma)/\pi_\alpha(\overline{\psi^{-1}}(M; E)^\Gamma)
\end{multline*}
of Equation \eqref{eq.def.maRG} is $\oplus_{\beta \in
  \widehat{\Gamma}_0, \beta \neq \alpha'}
\ \maC_0(S^*M; \End(p^{(0)}_{\beta} E))^{\Gamma}$, where $\alpha' :=
\alpha\vert_{\Gamma_0}$. In particular,
$\maR_M(\maC_0(S^*M; \End(E))^\Gamma) \simeq
\maC_0(S^*M; \End(p^{(0)}_{\alpha'} E))^{\Gamma}$.
\end{theorem}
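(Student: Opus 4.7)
The plan is to analyze $\maR_M$ summand by summand under the decomposition
\[
A_M^\Gamma \ \simeq\ \bigoplus_{\beta \in \widehat{\Gamma}_0} \maC_0(S^*M; \End(p^{(0)}_\beta E))^\Gamma
\]
provided by Lemma~\ref{lemma.direct.sum.symbol}. I will show that the summands with $\beta \neq \alpha'$ lie in $\ker \maR_M$, and that $\maR_M$ is injective on the summand with $\beta = \alpha'$. Together these statements give that $\ker \maR_M = \bigoplus_{\beta \neq \alpha'} \maC_0(S^*M; \End(p^{(0)}_\beta E))^\Gamma$ and that the image is isomorphic to the $\beta = \alpha'$ summand, as claimed.

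For the vanishing on summands with $\beta \neq \alpha'$, the key observation is a compatibility between the central projectors $p_\alpha \in C^*(\Gamma)$ and $p^{(0)}_\beta \in C^*(\Gamma_0)$. Since $\Gamma$ is abelian, any vector in the $\alpha$-isotypical component of a $\Gamma$-module transforms under $\Gamma_0$ by the character $\alpha\vert_{\Gamma_0} = \alpha'$, so $p_\alpha = p^{(0)}_{\alpha'}\,p_\alpha$ and therefore $p_\alpha\,p^{(0)}_\beta = 0$ for $\beta \neq \alpha'$. For any $T \in p^{(0)}_\beta\,\clop^\Gamma$ one then computes $\pi_\alpha(T) = p_\alpha T p_\alpha = p_\alpha\,p^{(0)}_\beta\, T\, p_\alpha = 0$, and the analogous vanishing holds for operators of order $-1$. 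Combined with the exact sequence~\eqref{eq.abelian.one}, which identifies the $\beta$-summand of $A_M^\Gamma$ with the quotient of $p^{(0)}_\beta\,\clop^\Gamma$ by $p^{(0)}_\beta\,\clopn^\Gamma$, this yields $\maC_0(S^*M; \End(p^{(0)}_\beta E))^\Gamma \subset \ker \maR_M$ for every $\beta \neq \alpha'$.

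For the injectivity on the summand $\beta = \alpha'$, I will reduce to Corollary~\ref{cor.free.isomorphism}. Twisting the $\Gamma$-action on $E$ by $\alpha^{-1}$ we may replace $(\pi, \alpha)$ by $(\alpha^{-1}\pi, 1)$; accordingly $\alpha'$ becomes the trivial character and $p^{(0)}_{\alpha'} E$ becomes the $\Gamma_0$-fixed subbundle. Since $\Gamma$ is abelian and $M$ is connected, $\Gamma_0$ already acts trivially on $M$, and by the twist it now acts trivially on $p^{(0)}_{\alpha'} E$ as well. The entire relevant structure therefore factors through the quotient $\Gamma/\Gamma_0$, which acts on the connected compact manifold $M$ with trivial minimal isotropy (hence freely on the dense open set $M_{(\Gamma_0)}$). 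Applying Corollary~\ref{cor.free.isomorphism} to the data $(\Gamma/\Gamma_0,\, M,\, p^{(0)}_{\alpha'} E,\, 1)$ gives the desired injectivity of $\maR_M$ on the summand.

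The main obstacle I anticipate is the bookkeeping needed to verify that the restriction of the original $\maR_M$ to the $p^{(0)}_{\alpha'}$-summand coincides, under the twist and the canonical identifications
\[
  p^{(0)}_{\alpha'}\,\overline{\psi^{k}}(M;E)^\Gamma \ \simeq\ \overline{\psi^{k}}(M; p^{(0)}_{\alpha'} E)^{\Gamma/\Gamma_0}, \quad k \in \{0, -1\},
\]
with the morphism $\maR_M$ from Corollary~\ref{cor.free.isomorphism} applied to the reduced data. This reduces to a naturality check for the symbol map and for the definition~\eqref{eq.def.maRG}, together with the observation that the twist by $\alpha^{-1}$ intertwines $\pi_\alpha$ on the original bundle with $\pi_1$ on the twisted one.
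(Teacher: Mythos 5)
Your proposal is correct and follows essentially the same route as the paper's own proof: decompose $A_M^\Gamma$ over $\beta\in\widehat{\Gamma}_0$, use the projector relation $p^{(0)}_\beta=\sum_{\alpha\vert_{\Gamma_0}=\beta}p_\alpha$ (equivalently your $p_\alpha p^{(0)}_\beta=0$ for $\beta\neq\alpha'$) to kill the off-summands, then twist by $\alpha^{-1}$, factor through $\Gamma/\Gamma_0$, and invoke Corollary \ref{cor.free.isomorphism} for injectivity on the $\alpha'$-summand. The naturality bookkeeping you flag as the main obstacle is indeed the only point the paper leaves implicit.
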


\begin{proof}
It is enough to identify the action of $\maR_M$ on each direct summand
$\maC_0(S^*M; \End(p^{(0)}_\beta E))^{\Gamma}$ of
$\maC_0(S^*M; \End(E))^\Gamma$. We can thus study the action of the
morphism $\maR_M$ one isotypical component $\beta \in
\widehat{\Gamma}_0$ at a time.

The relation between the central projectors of $C^*(\Gamma_0)$ and
$C^*(\Gamma)$ is that $p^{(0)}_\beta = \sum_{\alpha\vert_{\Gamma_0} = \beta}
p_\alpha$, $\beta \in \widehat{\Gamma}_0$. Of course, $p_\alpha
p_{\alpha'} = 0$ if $\alpha \neq \alpha' \in \widehat{\Gamma}$. It
follows that
\begin{equation}
  \pi_{\alpha}(p^{(0)}_\beta P) \seq p_\alpha p^{(0)}_\beta P
  \vert_{L^2(M; E)_{\alpha}} \seq
  \begin{cases}
    \ p_\alpha P\vert_{L^2(M; E)_{\alpha}} & \mbox{ if }
    \ \alpha\vert_{\Gamma_0}=\beta \\
    \quad 0 & \mbox{ otherwise}.
  \end{cases}
\end{equation}
This shows that $\maR_M = 0$ on $\maC_0(S^*M; \End(p^{(0)}_\beta
E))^{\Gamma}$ if $\alpha\vert_{\Gamma_0} \neq \beta$.

On the other hand, for $\alpha\vert_{\Gamma_0} = \beta$, we shall show
that $\maR_M$ is an isomorphism on
$\maC_0(S^*M; \End(p^{(0)}_\beta E))^{\Gamma}$. By replacing the
action $\pi$ of $\Gamma$ on $E$ with $\pi_0 := \alpha^{-1} \pi $, that
is, with, $\pi_0(g) \xi := \alpha^{-1}(g) \pi(g) \xi$, we can assume
that $\Gamma_0$ acts trivially on $E_\beta = p_{\beta}^{(0)} E$ (we
already know that $\Gamma_0$ acts trivially on $M$). We can then
factor the action of $\Gamma$ to an action of $\Gamma/\Gamma_0$, and
thus assume that the minimal isotropy is trivial: $\Gamma_0 = 1$.

After these reductions, the orbit bundle of $M$ is $O_0 := M_{(1)}$,
and the action of $\Gamma$ on $O_0$ is free (and proper since $\Gamma$
is compact). Corollary \ref{cor.free.isomorphism} then shows that
$\maR_M$ is injective on $\maC_0(S^*M; \End(p^{(0)}_\beta
E))^{\Gamma}$.
\end{proof}

We note that the component of $\sigma_m(P)$ in
$\maC_0(S^*M; \End(p^{(0)}_\beta E))^{\Gamma}$ is
$\sigma_m^\alpha(P)$, $\alpha\vert_{\Gamma_0} = \beta$, that
is, the restriction of $\sigma_0^\Gamma(P)$ to $X_{M, E,
  \Gamma}^{\alpha}$. In this regard, we notice that
  \begin{equation}
    \begin{cases}
       \ X_{M, E, \Gamma}^{\alpha} \seq X_{M, E, \Gamma}^{\alpha'} &
       \ \mbox{ if } \alpha\vert_{\Gamma_0} =
       \alpha'\vert_{\Gamma_0}\\
       \ X_{M, E, \Gamma}^{\alpha} \cap X_{M, E, \Gamma}^{\alpha'} = \emptyset &
       \ \mbox{ otherwise. }
    \end{cases}
  \end{equation}
Let us denote $X_{M, E, \Gamma}^{\beta} = X_{M, E, \Gamma}^{\alpha}$
if $\alpha\vert_{\Gamma_0} = \beta$. This gives the {\em disjoint
  union} decomposition
\begin{equation}
   X_{M, \Gamma, E} \seq \bigsqcup_{\beta \in \widehat{\Gamma}_0} X_{M,
     E, \Gamma}^{\beta}\,.
\end{equation}
It would be interesting to establish an analogous relation in the
nonabelian case.

\section{Applications and extensions}
\label{sec.applications}

We now prove the main result of the paper on the characterization of
Fredholm operators and discuss some extensions of our results.

\subsection{Fredholm conditions}
We now turn to the proof of our main result. We assume that $M$ is a
compact smooth manifold. We have the following $\Gamma$--equivariant
version of Atkinson's theorem.

\begin{proposition} \label{prop.Atkinson}
Let $V$ be a unitary $\Gamma$--module and $P$ be a
$\Gamma$--equivariant bounded operator on $V$. We have that $P$ is
Fredholm if, and only if, it is invertible modulo $\maK(V)^\Gamma$, in
which case, we can choose the parametrix (i.e. the inverse modulo the
compacts) to also be $\Gamma$-invariant.
\end{proposition}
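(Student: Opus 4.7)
The plan is to reduce to the classical (non-equivariant) Atkinson theorem, the only nontrivial point being the construction of a $\Gamma$-invariant parametrix by Haar averaging.

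For the "if" direction, if $Q \in \maL(V)^{\Gamma}$ satisfies $QP - I$, $PQ - I \in \maK(V)^{\Gamma} \subset \maK(V)$, then the classical Atkinson theorem immediately gives that $P$ is Fredholm on $V$, with no use of the $\Gamma$-action.

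For the "only if" direction, assume $P$ is Fredholm. The classical Atkinson theorem provides a (not necessarily $\Gamma$-invariant) parametrix $Q_{0} \in \maL(V)$ such that $K_{1} := Q_{0} P - I$ and $K_{2} := P Q_{0} - I$ are compact. Using the normalized Haar measure $d\gamma$ on the compact group $\Gamma$, I would define the averaged operator
\[
Q \ede \int_{\Gamma} \gamma\, Q_{0}\, \gamma^{-1}\, d\gamma \,.
\]
The integral is a strong (or Bochner) integral of a uniformly norm-bounded, strongly continuous integrand, hence defines a well-defined $Q \in \maL(V)$, and a change of variable in the Haar integral shows $Q \in \maL(V)^{\Gamma}$. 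Because $P$ commutes with the $\Gamma$-action, we have
\[
QP - I \seq \int_{\Gamma} \gamma\, (Q_{0}P - I)\, \gamma^{-1}\, d\gamma \seq \int_{\Gamma} \gamma K_{1} \gamma^{-1}\, d\gamma \,,
\]
and analogously $PQ - I = \int_{\Gamma} \gamma K_{2} \gamma^{-1}\, d\gamma$.

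The key technical point—and the main potential obstacle—is to verify that these two integrals lie in $\maK(V)$ (not merely in $\maL(V)$). For this I would observe that, when $K \in \maK(V)$, the map $\gamma \mapsto \gamma K \gamma^{-1}$ is norm-continuous from $\Gamma$ into $\maK(V)$: compactness of $K$ means $K$ sends the unit ball of $V$ into a norm-compact set, on which the strongly continuous unitary action of $\Gamma$ is automatically uniformly continuous, so $\gamma \mapsto \gamma K$ is norm-continuous; post-composition with the isometry $\gamma^{-1}$ preserves this. Therefore the integrand is a norm-continuous $\maK(V)$-valued function on a compact space, its Bochner integral lies in the norm-closed subspace $\maK(V)$, and by construction it is $\Gamma$-invariant, hence lies in $\maK(V)^{\Gamma}$. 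This yields $QP - I,\ PQ - I \in \maK(V)^{\Gamma}$, as required. (If $\Gamma$ is finite, which is the case needed for the main theorem, the integral is simply a finite average and the continuity issue is vacuous.)
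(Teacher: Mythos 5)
Your proof is correct, but it takes a genuinely different route from the paper's. The paper's argument is purely $C^*$-algebraic: it observes that $\maL(V)^\Gamma/\maK(V)^\Gamma$ sits inside the Calkin algebra $\maL(V)/\maK(V)$ as a unital $C^*$-subalgebra (using $\maK(V)^\Gamma = \maK(V)\cap\maL(V)^\Gamma$) and then invokes spectral permanence --- an element of a unital $C^*$-subalgebra is invertible there if, and only if, it is invertible in the ambient algebra, \cite[Proposition 1.3.10]{Dixmier} --- combined with the classical Atkinson theorem. Your Haar-averaging argument is more elementary and constructive: it produces the invariant parametrix explicitly as $\int_\Gamma \gamma Q_0\gamma^{-1}\,d\gamma$ and avoids the inverse-closedness theorem, at the price of the analytic verification that $\gamma\mapsto\gamma K\gamma^{-1}$ is norm-continuous into $\maK(V)$ for $K$ compact (a check that is vacuous for finite $\Gamma$, which is the case the paper ultimately needs). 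One small point there: the factor $\gamma^{-1}$ on the right also varies with $\gamma$, so ``post-composition with the isometry $\gamma^{-1}$'' is not quite sufficient as stated; you should either apply the same precompactness argument to $K^*$ to get norm-continuity of $\gamma\mapsto K\gamma^{-1}$, or split $\gamma K\gamma^{-1}-\gamma_0K\gamma_0^{-1}$ into the two terms $(\gamma K-\gamma_0K)\gamma^{-1}$ and $\gamma_0 (K\gamma^{-1}- K\gamma_0^{-1})$ and bound each in norm. This is a one-line repair and does not affect the validity of your proof.
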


\begin{proof}
This follows from the inclusion of
$C^*$-algebras
\begin{equation*}
    \maL(V)^\Gamma/\maK(V)^\Gamma \subset \maL(V)/\maK(V).
\end{equation*}
It is a standard fact that, if $B \subset A$ is an inclusion of unital
$C^*$-algebras, then an element $a \in B$ is invertible in $A$ if, and
only if, it is invertible in $B$ \cite[Proposition
  1.3.10]{Dixmier}. Therefore if $P \in \maL(V)^\Gamma$, then its
projection in $\maL(V)^\Gamma/\maK(V)^\Gamma$ is invertible if, and
only if, it is invertible in the greater algebra $\maL(V)/\maK(V)$. By
Atkinson's theorem, the latter is equivalent to $P$ being Fredholm.
\end{proof}

Since $\pi_\alpha(\maK^\Gamma) = \pi_\alpha(\maK(L^2(M;
  E)_\alpha))^\Gamma$ and $\clopn = \maK := \maK(L^2(M; E)_\alpha)$, we
  obtain the following corollary.

\begin{corollary} \label{cor.Atkinson}
Let $P \in \clop^\Gamma$ and $\alpha \in \hat{\Gamma}$.  We have that
$\pi_\alpha(P)$ is Fredholm on $L^2(M; E)_\alpha$ if, and only if,
$\pi_\alpha(P)$ is invertible modulo $\pi_\alpha(\maK^\Gamma)$ in
$\pi_\alpha(\clop^\Gamma)$.
\end{corollary}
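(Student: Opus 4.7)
The plan is to apply Proposition \ref{prop.Atkinson} directly to the unitary $\Gamma$-module $V = L^2(M; E)_\alpha$, and then transfer the resulting invertibility condition from the ``big'' Calkin-like algebra $\maL(V)^\Gamma/\maK(V)^\Gamma$ down to the subalgebra $\pi_\alpha(\clop^\Gamma)/\pi_\alpha(\maK^\Gamma)$ via spectral permanence for $C^*$-subalgebras (the same tool already invoked in the proof of Proposition \ref{prop.Atkinson}).

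First, since $P$ is $\Gamma$-equivariant, $\pi_\alpha(P)$ defines an element of $\maL(L^2(M; E)_\alpha)^\Gamma$ and $L^2(M;E)_\alpha$ is a unitary $\Gamma$-module. Thus Proposition \ref{prop.Atkinson} applies directly and gives the equivalence: $\pi_\alpha(P)$ is Fredholm on $L^2(M; E)_\alpha$ if and only if its class in $\maL(L^2(M; E)_\alpha)^\Gamma/\maK(L^2(M; E)_\alpha)^\Gamma$ is invertible.

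Second, Proposition \ref{prop.image} identifies $\pi_\alpha(\maK^\Gamma)$ with $\maK(L^2(M; E)_\alpha)^\Gamma$. The restriction map $\pi_\alpha : \clop^\Gamma \to \maL(L^2(M; E)_\alpha)^\Gamma$ is a $*$-morphism between unital $C^*$-algebras sending the identity to the identity, so its image $\pi_\alpha(\clop^\Gamma)$ is a closed \emph{unital} $C^*$-subalgebra of $\maL(L^2(M; E)_\alpha)^\Gamma$, containing $\pi_\alpha(\maK^\Gamma)$ as a closed two-sided ideal. Consequently the quotient $\pi_\alpha(\clop^\Gamma)/\pi_\alpha(\maK^\Gamma)$ is a unital $C^*$-subalgebra of $\maL(L^2(M; E)_\alpha)^\Gamma/\maK(L^2(M; E)_\alpha)^\Gamma$ sharing the same unit.

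Third, by the standard spectral permanence result for unital inclusions of $C^*$-algebras (\cite[Prop.\ 1.3.10]{Dixmier}), an element of the subalgebra is invertible in the subalgebra if and only if it is invertible in the larger algebra. Applying this to the class of $\pi_\alpha(P)$ combines with the first two steps to yield the desired equivalence. The only potential subtlety is checking that the embedding of quotients is unital, but this is immediate from $\pi_\alpha(I_{L^2(M;E)}) = I_{L^2(M;E)_\alpha}$; no real obstacle arises.
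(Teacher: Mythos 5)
Your proof is correct and follows essentially the same route as the paper, which simply combines Proposition \ref{prop.Atkinson} with the identification $\pi_\alpha(\maK^\Gamma)=\maK(L^2(M;E)_\alpha)^\Gamma$ from Proposition \ref{prop.image}. You merely make explicit the spectral-permanence step (passing invertibility between the unital subalgebra $\pi_\alpha(\clop^\Gamma)/\pi_\alpha(\maK^\Gamma)$ and the larger equivariant Calkin algebra) that the paper leaves implicit.
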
 

We are now in a position to prove the main result of this paper,
Theorem \ref{thm.main1}.

\begin{proof}[Proof ot Theorem \ref{thm.main1}]
Lemma \ref{lemma.red.zero} implies that we may assume $P \in
\overline{\psi^0}(M;E)^\Gamma$. Corollary \ref{cor.Atkinson} then
states that $\pi_\alpha(P)$ is Fredholm if, and only if, the image of
its principal symbol $\sigma_0(P)$ is invertible in the quotient
algebra
\begin{equation*}
   \maR_M(A_M^\Gamma) =
   \pi_\alpha(\overline{\psi^0}(M;E)^\Gamma)/\pi_\alpha(\maK^\Gamma).
\end{equation*}

We have shown the isomorphism $\Prim (A_M^\Gamma) \simeq X_{M, E,
  \Gamma}/\Gamma$ in Equation \eqref{eq.bijection.Omega}. Now Theorem
\ref{thm.ker.maRM} and the discussion following it identify the
primitive spectrum of $\maR_M(A_M^\Gamma)$, which is a closed subset
of $\Prim(A_M^\Gamma)$, with the set $X_{M, E,
  \Gamma}^\alpha/\Gamma$. Recall that
\begin{equation*}
    X_{M, E, \Gamma}^\alpha = \{(\xi,\rho) \in
    T^*M\setminus\{0\}\times\widehat{\Gamma}_\xi \mid \rho_{|\Gamma_0}
    = \alpha_{|\Gamma_0} \},
\end{equation*}
as defined in the introduction.  

Therefore $\maR_M(\sigma(P))$ is invertible if, and only if, the
endomorphism $\pi_{\xi,\rho}(\sigma(P))$ is invertible for all
$(\xi,\rho) \in X_{M, E, \Gamma}^\alpha$, i.e.\ if and only if $P$ is
$\alpha$-elliptic.
\end{proof}

\subsection{Boundary value problems}
In this subsection, we very briefly indicate an application to
  mixed boundary value problems. Let $M$ be a smooth compact manifold
with boundary and choose a tubular neighborhood $U \simeq [0,1) \times
  \pa M$ of the boundary. Let $M^d$ be the \emph{double} of $M$ along
  $\pa M$: as a topological space, the space $M^d$ is the quotient of
  $M\times\{-1,1\}$ by the subspace $\pa M \times \{-1,1\}$. We shall
  denote $M_\pm \ede M\times\{\pm 1\}$. On $M^d$ we consider the
    smooth structure such that
\begin{enumerate}
  \item the projections $p_{\pm} : M_\pm\to M$ are smooth maps, and
  \item the map $U^d \simeq (0,1) \times \pa M$ is smooth.
\end{enumerate}
Thus the smooth structure on $M^d$ thus depends on our choice of
tubular neighborhood.  For any $x = (x',i) \in M^d$, we denote by $-x$
its symmetrical counterpart, i.e.\ $-x = (x',-i)$. Then the map $x
\mapsto -x$ gives a natural smooth action of $\ZZ_2$ on $M^d$.

If $E \to M$ is a smooth vector bundle, then we define $E^d \to M^d$
as the smooth vector bundle obtained by gluing two copies of $E$ on
$M_+$ and $M_-$ along $\pa M$. Then the $\ZZ_2$-action on $M^d$
extends to an action on $E^d$, which maps an element $v \in E^d_x$ to
its copy in $E^d_{-x}$, for any $x \in M^d$.

We generalize this construction to the case when we have
a disjoint union decomposition of the boundary $\pa M = \pa_D M \cup
\pa_N M$ into two disjoint, closed and open subsets. Then we double
first with respect to the ``Dirichlet'' part of the boundary $\pa_D M$
and then with respect to the ``Neumann'' part of the boundary $\pa_N
M$. We obtain accordingly an action of $\ZZ_2^2$ on the resulting
manifold $M^{dd}$. We let this group act on the resulting vector
bundle $E^{dd}$ such that the action of the first component of $\ZZ_2$
is twisted (i.e. tensored) with its only non-trival character, namely
$-1$. We have the following standard lemma.

\begin{lemma} \label{lm.restriction.even.odd}
The restriction map $r_+ :\maC^\infty(M^{dd};E^{dd}) \to
  \maC^\infty(M_+;E)$ induces a isomorphisms
  \begin{enumerate}
    \item $L^2(M^{dd};E^{dd})^{\ZZ_2^2} \, \simeq \, L^2(M;E)$,
    \item $H^2(M^{dd};E^{dd})^{\ZZ_2^2} \, \simeq \, H^2(M;E) \,
      \cap\, \{\, u\vert_{\pa_D M} = 0 ,\ \pa_\nu u \vert_{\pa_N M} =
      0 \, \}$.
  \end{enumerate}
\end{lemma}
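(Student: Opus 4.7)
The plan is to treat the two statements in sequence, with (2) being the substantive part. For (1), observe that any $\ZZ_2^2$-invariant $L^2$-section of $E^{dd}$ is determined by its restriction to a single sheet: if $u$ is invariant and $v \ede r_+(u)$, then on each sheet $g \cdot M_+$ ($g \in \ZZ_2^2$) we have $u = \widetilde g \cdot v$, where $\widetilde g$ combines the geometric reflection with the sign twist from the nontrivial character on the first $\ZZ_2$-factor. Thus $r_+$ restricts to a bijection $L^2(M^{dd};E^{dd})^{\ZZ_2^2} \to L^2(M;E)$, and both directions are continuous (the $L^2$-norms differ by a factor of $2$), yielding the isomorphism.

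For (2), the forward inclusion uses the continuity of the two classical traces $H^2(M;E) \to H^{3/2}(\pa M;E)$ and $H^2(M;E) \to H^{1/2}(\pa M;E)$, which recover $u\vert_{\pa M}$ and $\pa_\nu u\vert_{\pa M}$ respectively. Given $u \in H^2(M^{dd};E^{dd})^{\ZZ_2^2}$, invariance under the first $\ZZ_2$-factor (twisted by $-1$) forces the trace of $r_+(u)$ at $\pa_D M$ to satisfy $v = -v$, hence $r_+(u)\vert_{\pa_D M} = 0$. Invariance under the second $\ZZ_2$-factor (untwisted) forces the normal derivative of $r_+(u)$ at $\pa_N M$ to satisfy $\pa_\nu v = -\pa_\nu v$ (since $\pa_\nu$ changes sign under the reflection), hence $\pa_\nu r_+(u)\vert_{\pa_N M} = 0$.

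For the reverse inclusion, given $u \in H^2(M;E)$ satisfying the two boundary conditions, I would extend $u$ to a section $\tilde u$ of $E^{dd}$ over $M^{dd}$ by odd reflection across $\pa_D M$ (combined with the sign twist on the bundle) and by even reflection across $\pa_N M$. Since the hypothesis of the lemma is that $\pa_D M$ and $\pa_N M$ are disjoint clopen subsets of $\pa M$, these reflections operate on disjoint portions of the boundary; there are no corners to handle, and the extensions commute. The extension $\tilde u$ is $\ZZ_2^2$-invariant by construction and satisfies $r_+(\tilde u) = u$. The main technical input is the classical Sobolev reflection lemma: for an $H^2$ function on a half-space, the even extension is $H^2$ if and only if the normal derivative vanishes on the boundary, while the odd extension is $H^2$ if and only if the function itself vanishes on the boundary. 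Applied locally in a tubular neighborhood of each boundary component, using trivializations of $E$, the vanishing hypotheses on $u$ yield exactly what is needed.

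The main obstacle is the careful local bookkeeping of the reflections and bundle trivializations near $\pa M$, but this is routine once one exploits the disjointness $\pa_D M \cap \pa_N M = \emptyset$, which avoids any genuine corner analysis and allows the two reflection arguments to be performed entirely independently.
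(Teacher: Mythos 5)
Your proof is correct. The paper gives no proof of this lemma at all (it is introduced as ``the following standard lemma''), and your argument --- sheet-by-sheet reduction for the $L^2$ statement, the $H^{3/2}$ and $H^{1/2}$ trace maps for the forward inclusion in the $H^2$ statement, and the classical even/odd Sobolev reflection lemma applied locally in the tubular neighborhood (using that $\pa_D M$ and $\pa_N M$ are disjoint clopen pieces, so no corner analysis is needed) for the reverse inclusion --- is exactly the standard argument the authors are invoking.
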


An order-$2$, $\ZZ_2^2$-invariant pseudodifferential operator $P$ on
$M^{dd}$ will map invariant sections to invariant sections; this means
that we consider the case $\alpha = 1$ in Theorem
\ref{thm.main1}. Because the action of $\ZZ_2^2$ is free on a dense
subset of $M^{dd}$, Theorem \ref{thm.main1} implies that $P$ is
Fredholm from $H^2(M^{dd};E^{dd})^{\ZZ_2^2}$ to
$L^2(M^{dd};E^{dd})^{\ZZ_2^2}$ if, and only if, it is elliptic. This
then yields Fredholm conditions for the restriction of $P$ to $M$,
with mixed Dirichlet/Neumann boundary conditions on $\pa_D M$ and
$\pa_N M$.

\subsection{The case of non-discrete groups}
If $\Gamma$ is not discrete, then it is enough for our operators to be
{\em transversally} elliptic.  Indeed, let us assume that $M$ is a
compact smooth manifold and that $\Gamma$ is a compact Lie group
acting on $M$. Denote by $\mathfrak{g}$ the Lie algebra of
$\Gamma$. Recall that any $X \in \mathfrak{g}$ defines as usual the
vector field $X^*_M$ given by $X^*_M(m)=\frac{d}{dt}_{|_{t=0}}
e^{tX}\cdot m$.  Let first introduce the $\Gamma$-transversal space
$$T^*_\Gamma M := \{\alpha \in T^*M\ |\ \alpha(X^*_M(\pi(\alpha)))=0,
\forall X\in \mathfrak{g}\}.$$ A $\Gamma$-invariant classical
pseudodifferential operator $P$ of order $m$ is said
{\em $\Gamma$-transversally elliptic} if its principal symbol is invertible
on $T^*_\Gamma M\setminus \{0\}$.  Let $P \in \psi^m(M; E_0,E_1)$ be
$\Gamma$-transversally elliptic. Recall the now classical result of
Atiyah and Singer \cite[Corollary 2.5]{atiyahGelliptic} 

\begin{theorem}
Assume $P$ is $\Gamma$-transversaly elliptic. Then for every
irreducible representation $\alpha\in \widehat{\Gamma}$,
  $$\pi_\alpha(P) : H^s(M; E_0)_\alpha \, \to \, H^{s-m}(M;
  E_1)_\alpha,$$ is Fredholm.
\end{theorem}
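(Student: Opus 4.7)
The plan is to adapt the parametrix-modulo-compacts strategy used in the proof of Theorem \ref{thm.main1}, replacing full ellipticity by transversal ellipticity. First, via Lemma \ref{lemma.red.zero} I would reduce to the case $m=0$. By Proposition \ref{prop.Atkinson} it then suffices to construct a $\Gamma$-invariant parametrix $Q \in \overline{\psi^0}(M;E_1,E_0)^\Gamma$ such that $\pi_\alpha(QP-I)$ and $\pi_\alpha(PQ-I)$ are compact on the respective $\alpha$-isotypical components.

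To build $Q$, I would cover $M$ by finitely many tubes $W_j \simeq \Gamma \times_{H_j} U_j$ around orbits $\Gamma x_j$ with $H_j = \Gamma_{x_j}$, and fix a $\Gamma$-invariant partition of unity subordinate to this cover. On each tube, Frobenius reciprocity identifies $L^2(W_j;E_i)_\alpha$ with sections over the slice $U_j$ of an associated $H_j$-equivariant bundle whose fiber at $y$ is $\Hom_{H_j}(\alpha, (E_i)_y)$. Under this identification, $P$ pushes forward to a pseudodifferential operator $P_j^\alpha$ on $U_j$ whose principal symbol at $\eta \in T^*U_j$ is obtained by evaluating $\sigma_m(P)$ at the lift of $\eta$ to $T^*_\Gamma M\vert_{U_j}$ and restricting to the $\Hom_{H_j}$-fiber. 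The transversal ellipticity of $P$ says precisely that this lifted symbol is invertible on $T^*_\Gamma M \smallsetminus \{0\}$, so $P_j^\alpha$ is genuinely elliptic on $U_j$ and admits a local parametrix $Q_j^\alpha$. Lifting $Q_j^\alpha$ back to a $\Gamma$-invariant pseudodifferential operator on $W_j$ and patching via the partition of unity yields the desired global $Q$.

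The main obstacle is to show that the residuals $R := QP - I$ and $R' := PQ - I$ are compact after restriction to the $\alpha$-isotypical components. These split into two pieces: an order-$(-1)$ piece, compact on $L^2(M;E_i)_\alpha$ by Equation \eqref{eq.compact} and Proposition \ref{prop.image}; and an order-$0$ piece whose principal symbol is supported on $T^*M \smallsetminus T^*_\Gamma M$, arising because the local parametrix inverts the symbol only in the transversal directions. The key observation, which is the analytic heart of the Atiyah--Singer argument, is that the $\alpha$-isotypical component of $L^2$ over each orbit $\Gamma \cdot x \simeq \Gamma/\Gamma_x$ is finite-dimensional, with dimension bounded by $\dim \Hom_{H_j}(\alpha, (E_i)_{x_j})$; consequently an operator whose symbol lives in the orbit directions acts, fiberwise over $M/\Gamma$, by operators of uniformly bounded rank on an $\alpha$-isotypical subspace of uniformly bounded dimension. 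Integration against a compactly supported kernel over the base $M/\Gamma$ then yields a Hilbert--Schmidt, hence compact, operator on $L^2(M;E_i)_\alpha$. Combined with the first piece, this makes both residuals compact on the isotypical components, and Proposition \ref{prop.Atkinson} concludes the argument.
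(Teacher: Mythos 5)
First, a point of comparison: the paper does not prove this theorem at all --- it simply quotes it as \cite[Corollary 2.5]{atiyahGelliptic}, so you are attempting strictly more than the text does. Your overall strategy (reduce to order zero, build a parametrix that inverts the symbol only near $T^*_\Gamma M$, and then show the order-zero residual microsupported away from $T^*_\Gamma M$ dies on the isotypical component) is indeed the skeleton of Atiyah's argument. But the step you call ``the analytic heart'' is exactly where your justification fails. An order-zero pseudodifferential operator whose essential support is contained in $T^*M \smallsetminus T^*_\Gamma M$ does \emph{not} act ``fiberwise over $M/\Gamma$'' by operators along the orbits: its Schwartz kernel is still singular on the diagonal and genuinely couples the transversal directions, so there is no ``integration of a compactly supported kernel over the base'' and no Hilbert--Schmidt bound to be had. (Already for $\Gamma = S^1$ acting on the first factor of $S^1 \times N$, an operator with symbol supported in the cone $|\tau| \ge c|\xi|$ is a full-fledged singular integral operator in the $N$-variables.) The correct mechanism is microlocal: the kernel of $p_\alpha$ is obtained by averaging against a character, so $p_\alpha$ gains arbitrarily many derivatives in the orbit directions (equivalently, its wavefront set lies over the conormal to the orbit equivalence relation); composing it with an operator microsupported where the orbit frequencies dominate then forces the transversal frequencies to be bounded on the range of $p_\alpha$, and the composite is smoothing, hence compact. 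This is precisely Atiyah's key lemma preceding Corollary 2.5 in \cite{atiyahGelliptic} (with H\"ormander's wavefront calculus doing the work), and your proposal asserts its conclusion without a valid proof.

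A secondary gap: for a non-discrete $\Gamma$ the orbits have positive dimension, so neither the compression of $P$ to the slice $U_j$ nor the lift of $Q_j^\alpha$ back to the tube $W_j$ is automatically a pseudodifferential operator --- the isotypical projection $p_\alpha$ that implements the Frobenius identification is not pseudodifferential along the orbits, and showing that $p_\alpha P p_\alpha$ pushes down to a classical operator on the slice with principal symbol $\sigma_m(P)\vert_{T^*_\Gamma M}$ is itself a nontrivial statement (true on the principal stratum, delicate at singular orbits). A cleaner route, which avoids this entirely, is to take $Q$ to be an honest $\Gamma$-invariant operator in $\overline{\psi^0}(M;E_1,E_0)^\Gamma$ whose symbol equals $\sigma_0(P)^{-1}$ on a conic neighborhood of $T^*_\Gamma M$ and is cut off elsewhere; then $QP - I$ has the decomposition you want, and the entire burden falls on the compactness lemma discussed above.
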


Note that this implies that Theorem \ref{thm.main1} is not true anymore for
$\Gamma$-transversally elliptic operators if $\Gamma$ is non-discrete.

\end{document}